\theoremstyle{plain}
\newcounter{parentnumber}
\newtheorem{theorem}{Theorem}[section]
\newtheorem{lemma}[theorem]{Lemma}
\newtheorem{corollary}[theorem]{Corollary}
\newtheorem{remark}[theorem]{Remark}
\newtheorem{example}[theorem]{Example}
\newtheorem{proposition}[theorem]{Proposition}
\newtheorem{definition}[theorem]{Definition}
\crefname{theorem}{Theorem}{Theorems}
\crefname{lemma}{Lemma}{Lemmas}
\crefname{proposition}{Proposition}{Propositions}
\crefname{corollary}{Corollary}{Corollaries}
\crefname{definition}{Definition}{Definitions}
\acrodef{wlog}[WLOG]{without loss of generality}
\acrodef{lsc}[lsc]{lower semi-continuous}
\definecolor{red}{RGB}{163, 31, 52}
\definecolor{gray}{RGB}{194, 192, 191}
\definecolor{blue}{RGB}{59, 89, 152}
\definecolor{green}{RGB}{0, 179, 0}
\newcommand{\norm}[1]{\left\|#1\right\|}
\newcommand{\tnorm}[1]{\|#1\|}
\newcommand{\abs}[1]{\left|#1\right|}
\newcommand{\E}[2]{{\mathbb E}_{#1} \left[ #2 \right]}
\newcommand{\one}[1]{\mathbb{1}\left\{#1\right\}}
\renewcommand{\tfrac}[2]{{#1}/{#2}}
\newcommand{\set}[2]{\left\{ #1\ : \ #2 \right\}}
\newcommand{\tset}[2]{\{ #1\ : \ #2 \}}
\newcommand{\KL}{{\rm{KL}}}
\newcommand{\tpose}{^\top}
\newcommand{\defn}[0]{:=}
\newcommand{\mean}{\texttt{mean}}
\newcommand{\median}{\texttt{median}}
\newcommand{\huber}{\texttt{huber}}
\newcommand{\phihuber}{\varphi^\huber}
\newcommand{\mc}{\mathcal}
\newcommand{\mb}{\mathbb}
\newcommand{\mr}{\mathrm}
\renewcommand{\d}{{\mathrm{d}}}
\renewcommand{\Re}{\mathrm{R}}
\newcommand{\st}{\mr{s.t.}}
\renewcommand{\emph}{\textbf}
\DeclareMathOperator{\cl}{cl}
\DeclareMathOperator{\TV}{TV}
\DeclareMathOperator{\D}{TV}
\DeclareMathOperator{\LP}{LP}
\DeclareMathOperator{\radius}{radius}
\DeclareMathOperator{\cheb}{center}
\DeclareMathOperator{\interior}{int}
\DeclareMathOperator{\Prob}{Prob}
\DeclareMathOperator{\sign}{sign}
\newcommand{\cX}{\mathcal{X}}
\newcommand{\cP}{\mathcal{P}}
\newcommand{\cD}{\mathcal{D}}
\newcommand{\Eb}{\mathbb{E}}
\newcommand{\Pb}{\mathbb{P}}
\newcommand{\Qb}{\mathbb{Q}}
\newcommand{\SpKL}[2]{S_{#1,#2}}
\newcommand{\Ptrue}{\mb P ^\star}
\newcommand{\thetatrue}{{\theta^\star}}
\newcommand{\Pcor}{\mb P ^{\mathrm{c}}}
\newcommand{\hPb}{\hat{\mb P}}
\newcommand{\resolution}{statistical resolution }
\newcommand{\rad}[3]{\underline{\Delta}(#1;#2,#3)}
\newcommand{\KLrad}[2]{\Delta_{\mathrm{wc}}(#1,#2)}
\newcommand{\KLradp}[2]{\Delta_{\mathrm{as}}(#1,#2)}
\renewcommand{\r}[0]{r_{\mathrm{wc}}}
\newcommand{\rp}[0]{r_{\mathrm{as}}}
\title{From Distributional Robustness to Robust Statistics: A Confidence Sets Perspective}
\author[1]{Gabriel Chan\thanks{gabriel.chan@polytechnique.edu}}
\author[2]{Bart P.G.\ Van Parys\thanks{bart.vanparys@cwi.nl}}
\author[3]{Amine Bennouna\thanks{amineben@mit.edu}}
\affil[1]{\'Ecole Polytechnique}
\affil[2]{CWI Amsterdam}
\affil[3]{Massachusetts Institute of Technology}
\date{}
\begin{document}

\maketitle

\begin{abstract}
  We establish a connection between distributionally robust optimization (DRO) and classical robust statistics. We demonstrate that this connection arises naturally in the context of estimation under data corruption, where the goal is to construct ``minimal'' confidence sets for the unknown data-generating distribution. Specifically, we show that a DRO ambiguity set, based on the Kullback-Leibler divergence and total variation distance, is uniformly minimal, meaning it represents the smallest confidence set that contains the unknown distribution with at a given confidence power. Moreover, we prove that when parametric assumptions are imposed on the unknown distribution, the ambiguity set is never larger than a confidence set based on the optimal estimator proposed by \cite{huber1964robust}. This insight reveals that the commonly observed conservatism of DRO formulations is not intrinsic to these formulations themselves but rather stems from the non-parametric framework in which these formulations are employed.
\end{abstract}

\section{Introduction}
\label{sec:introduction}

Consider the stochastic optimization problem
\begin{equation}
  \label{eq:stochastic-optimization}
  \min_{x\in \mc X} \E{\Ptrue}{\ell(x, \xi)}
\end{equation}
where $\ell: \mc X\times\Xi\to\Re$ is a given loss function and $\mb P^\star\in \mc P$ denotes the distribution of the uncertainty $\xi\in\Xi$. Stochastic optimization is a well established framework for decision-making under uncertainty \citep{kleywegt2001stochastic}. However, in practice the distribution $\Ptrue$ is typically unobserved and hence needs to be estimated from historical data points $\{\xi_1, \dots, \xi_n\}$ with empirical distribution $\mb P_n$.

A natural data-driven counterpart to problem \eqref{eq:stochastic-optimization} is the so-called sample average approximation \citep{kleywegtSampleAverageApproximation2002} otherwise known as empirical risk minimization, where the unknown distribution $\Ptrue$ is replaced by the empirical distribution $\mb P_n$, and the data-driven formulation $\min_{x\in \mc X} \E{\mb P_n}{\ell(x, \xi)}$ is solved instead. However, blindly optimizing the empirical risk based on the empirical distribution $\mb P_n$ might (and often does) lead to decisions with poor performance under the actual but unknown distribution $\Ptrue$ \citep{smith2006optimizer}. In fact, several factors can lead to a substantial difference between the out-of-sample distribution and the distribution of the observed data samples. First, the distribution $\mb P_n$ is merely an empirical estimate subject to the randomness of the draw.
We can think of the difference attributed to this effect as statistical error which fades asymptotically and is expected to be of order $\mc O(1/\sqrt{n})$ based on the central limit theorem \citep{blanchetStatisticalLimitTheorems2023a}. A second factor is that the distribution generating the samples is not necessarily the same as the actual distribution $\Ptrue$ out-of-sample due for instance to data corruption, distribution shift or adversarial contamination. Unlike statistical error the effect of adversarial corruption typically does not disappear simply by collecting more data.

Distributionally robust optimization (DRO) offers a disciplined approach when making decisions based on historical data and comes with strong statistical guarantees. In a classical robust optimization approach \citep{bertsimas2018data, delage2010distributionally}, one typically proceeds in two steps. First, one tries to construct a confidence set $\mc S(\mb P_n)\subseteq \mc P$ of distributions which despite the finite number of collected data points tries to fence in the unknown distribution $\mb P^\star$. Typically, one seeks covering guarantees of the form $\mb P^\star \in \mc S(\mb P_n)$ with high probability over the randomness of the draw. Subsequently, a robust formulation
\begin{equation}
  \label{eq:dro}
  \min_{x\in \cal X} \max_{\mb Q\in \mc S(\mb P_n)}\E{\mb Q}{\ell(x, \xi)}
\end{equation}
guarantees a minimal worst-case cost in face of the remaining uncertainty as represented by the considered confidence set. From this perspective, it is clear that smaller confidence sets, provided they enjoy with the same covering guarantee, are preferable as they allow for less conservative downstream decisions in the robust formulation (\ref{eq:dro}).
\cite{lamRecoveringBestStatistical2019} and \cite{duchi2021statistics} point out that covering guarantees are not necessary (albeit sufficient) for the robust decision in formulation (\ref{eq:dro}) to have desirable statistical guarantees in particular when working with confidence sets characterized as $f$-divergence balls \citep{pardo2018statistical} around the empirical distribution. A similar observations has recently been made in the context of Wasserstein based confidence sets as well by \citet{shafieezadeh2019regularization, blanchet2019robust, gao2024wasserstein, blanchet2022confidence}.
However, empirical formulations where the confidence set $\mc S(\mb P_n)$ does not contain $\mb P^\star$ with high probability typically require additional complexity conditions on the loss function class $\set{\xi \mapsto\ell(x, \xi)}{x\in \cal X}$.
As the statistical properties of the robust formulation (\ref{eq:dro}) with a covering guarantee are entirely attributed to what confidence set estimator $\mc S$ is considered, we focus here on the covering guarantees, which allows us to make abstraction of the underlying loss function entirely, as also pointed out in recent work by \cite{liu2023smoothed}.

A plethora of different confidence sets with desirable properties have been considered in prior work. In early work by \cite{delage2010distributionally, bertsimas2005optimal, wiesemann2014distributionally} the confidence sets were moment based and selected because of their computational tractability.
Later on confidences sets based on a statistical distance such as $f$-divergence \cite{hu2013kullback, bayraksan2015data} and optimal transport distances \cite{kuhn2019wasserstein, gao2023distributionally} gained popularity as they enjoy better statistical properties.
A confidence set which will be of particular importance in this paper is the Kullback-Leibler confidence set
\(
  \mc S_{r}(\mb P_n) = \set{\mb Q\in \mc P}{\KL(\mb P_n, \mb Q)\leq r}
\)
where for all distributions $\mb P, \mb Q \in \mc P$ the quantity
\[
    \KL(\mb P, \mb Q) = \begin{cases}
      \int \log\left(\frac{\d \mb P}{\d \mb Q}(\xi)\right) \d \mb P(\xi) & {\rm{if}} ~\mb P\ll \mb Q,\\
      +\infty & {\rm{otherwise}}
    \end{cases}                         
  \]
denotes the Kullback-Leibler divergence.
\cite{vanparys2021data} point out that when the historical data points are samples from the unknown distribution $\Ptrue$, in the absence of any noise or corruption, such confidence set is ``minimal'' and its associated robust optimization formulation is consequently least conservative in a subtle sense which we make precise in Section \ref{sec:parametric-estimation}.

However, the desirable statistical properties of the Kullback-Leibler confidence set estimator evaporate in the presence of a distributional shift in which the data points $\{\xi_1, \dots, \xi_n\}$ are not sampled from $\Ptrue$ precisely but instead from a noisy \citep{vanparys2024efficient} or corrupted \citep{bennouna2022holistic} distribution instead.
In this work we will assume that our adversary can move a small fraction $\alpha \in [0, 1/2]$ of the mass of the distribution $\Ptrue$ to an arbitrary location, then, data is sampled from the perturbed distribution $\Pcor$.
Such distribution shift can be characterized precisely using the total variation (TV) distance
\[
  \TV(\mb P, \mb Q) = \inf \set{\epsilon> 0}{\mb Q(E) \leq \mb P(E)+\epsilon,~\mb P(E) \leq \mb Q(E)+\epsilon\quad \forall E\subseteq \Xi}
\]
for all distributions $\mb P, \mb Q \in \mc P$.
Indeed, the set of all conceivable corrupted distributions $\Pcor$ is here precisely characterized as $\{ \mb Q \in \mc P \; : \; \TV(\mb Q, \Ptrue) \leq \alpha\}$ as can be seen from the classical  
\citet{strassen1965existence} representation
\(
  \TV(\mb Q, \mb P)= \textstyle\inf_{\gamma\in \Gamma(\mb Q, \mb P)}\int \one{\xi \neq \xi'} \d \gamma(\xi, \xi'),
\)
where $\Gamma(\mb Q, \mb P)$ denote the transport polytope of all distributions on support $\Xi \times \Xi$ with given marginals $\mb Q$ and $\mb P$. This implies that $\TV(\mb Q, \mb P) \leq \alpha$ if and only if there exists a transport map that transforms distribution $\Pb$ into $\Qb$ by transporting less than a fraction $\alpha$ of the total probability mass. Other distortion distances could be considered as well. In fact, in the first three sections we only use that the total variation distance is convex, lower-semicontinuous and satisfies $\D(\mb P, \mb P)=0$ for all $\mb P\in \mc P$. However, at the moment we leave the generalization of our results to more general distortion distances as future work.

In settings with distribution shift, \cite{bennouna2022holistic} recently suggested confidence sets of the general form
\[
  \mc S_{r,\alpha}(\mb P_n) =  \set{\mb P \in \mc P}{\exists \mb Q\in \mc P~{\rm{\st}}~\KL(\mb P_n, \mb Q)\leq r, ~\D(\mb Q, \mb P)\leq \alpha},
\]
where the distortion distance $\D$ captures the distribution shift from the unknown distribution $\Ptrue$ of interest to a corrupted distributing $\Qb$, and the $\KL$ divergence accounts for an additional shift due to statistical error.
\cite{bennouna2022holistic} show that the associated robust optimization formulation \eqref{eq:dro} is least conservative in the same sense as the Kullback-Leibler confidence set estimator discussed earlier. We will indicate in the sequel that the associated ambiguity set is indeed ``minimal'' in the same subtle sense (which we have yet to make precise) as the KL ambiguity set in the absence of corruption.

Despite the alleged claim that the confidence set $\mc S_{r,\alpha}(\mb P_n)$ is ``minimal'' a careful observer may remark that it is nevertheless quite ``large'' and perhaps even ``too large''.
Indeed, it is easy to see that for any event $\bar{\xi} \in \Xi$,
the ambiguity set $\mc S_{r,\alpha}(\mb P_n)$ contains any distribution of the form $\alpha \delta_{\bar{\xi}} + (1-\alpha)\mb P_n$ where $\delta_{\bar{\xi}}$ is a degenerate point mass distribution at any arbitrary point $\bar{\xi}$ in our event set $\Xi$.
The previous observation implies that we have the lower bound
\[
\max_{\mb Q\in \mc S_{r,\alpha}(\mb P_n)}\E{\mb Q}{\ell(x, \xi)}\geq  (1-\alpha)\E{\mb P_n}{\ell(x, \xi)} + \alpha \textstyle\sup_{\bar \xi\in \Xi} \ell(x, \bar \xi).
\] 
This implies that with any distribution shift (however small $\alpha>0$) in the context of an unbounded loss function, even the alleged ``minimal'' confidence set yields a trivial cost estimate $+\infty$.
In the face of the previous observation, it would be tempting to conclude that the considered adversary may simply be too powerful to allow for a nontrivial insight in problems where the loss is not bounded. However, it can be remarked that the previous pathology occurs even in the absence of an adversary. Indeed, remark that it is straightforward to verify that
the ambiguity set $\mc S_{r,\alpha}(\mb P_n)$ contains the distribution $(1-\exp(-r)) \delta_{\bar{\xi}} + \exp(-r)\mb P_n$ as well for any $\alpha\geq 0$ and hence in particular also for $\alpha=0$ associated with the absence of corruption. Hence, no nontrivial formulations are possible even in the case of $\alpha = 0$ and $r>0$, i.e., with no corruption and merely statistical error, if our loss function is unbounded.

Perhaps more worryingly, \cite{bennouna2022holistic} show that even when the loss function is bounded, the associated ``least conservative'' robust formulation 
enjoys the saddle point condition
\begin{align*}
 \min_{x\in \cX} \max_{\mb Q\in \mc S_{r,\alpha}(\mb P_n)}\E{\mb Q}{\ell(x, \xi)} =& 
       \min_{x\in \cX} \sum_{i=1}^n q^\star_i \ell(x, \xi_i)  + q^\star_0 \max_{\bar \xi\in \Xi}\ell(x,\bar \xi)
\end{align*}
with $q^\star \in C^n_{r, \alpha}$ and $C^n_{r, \alpha}$ a convex set containing $(0, \tfrac 1n, \dots, \tfrac 1n)$. In particular, the associated robust formulation can be interpreted as an empirical risk minimization formulation where the data samples associated with a large loss tend to be emphasized rather than suppressed. This goes against the adage in robust statistics \citep{huber1981robust} of learning from corrupted data by attempting to identify and subsequently remove or at least suppress those samples the adversary has added to the training data. Indeed, it can be  remarked that even modern seminal work on robust learning such by \cite{diakonikolas2023algorithmic} is still based roughly on the idea of identifying and suppressing maliciously added outliers. Hence, there seems to be an interesting dichotomy between how robust optimization and robust statistics deals with the problem of data corruption and distribution shifts.

To bridge this seeming dichotomy between robust optimization and robust statistics, \citet{jiang2021dfo} recently proposed an optimistic formulation, rather than the worst-case formulation discussed here \eqref{eq:dro}, of the form
\begin{equation}
  \label{eq:dfo}
  \min_{x\in \cX} \min_{\mb Q\in \mc S(\mb P_n)}\E{\mb Q}{\ell(x, \xi)}
\end{equation}
which they argue can recover many classical robust statistical frameworks such as Winsorized regression and the Huber-skip estimator. Intuitively, whereas a worst-case perspective tends to emphasize large loss scenarios the previous optimistic formulation tends to suppress large loss scenarios more in line with what classical robust approaches seem to suggest. Unfortunately, formulation (\ref{eq:dfo}) is not a robust formulation in the classical sense---it is in fact antithetical to a robust formulation---and  hence any desirable effect it has protecting its decision against adversarial corruption is argued indirectly by comparison to well known robust statistical procedures rather than argued \textit{ex ante} from first principles. 
In the context of learning from corrupted data, resolving the described apparent conceptual dichotomy between the least conservative distributionally robust formulations and classical robust statistics from first principles will be the main research question of this paper.

\subsection{Contributions}

We will come to argue that the main cause of this dichotomy can be traced back to the lack of any structure imposed on the unknown distribution $\mb P^\star$. Indeed the distribution $\mb P^\star$ in the DRO literature is typically left completely arbitrary as in practice any structure imposed may be difficult to justify. However this lack of structure excludes the possibility to distinguish an artificial outlier added by the adversary from a genuine data sample. The removal of outliers becomes a possibility only when we impose that the data-generating distribution $\mb P^\star$ belongs to a subset of distributions which do not naturally produce outliers merely by sampling.
For instance, \cite{diakonikolas2023algorithmic} imposes the unknown distribution to be normal with unknown mean and variance.
If we know indeed that the distribution $\mb P^\star$ belongs to a parametric family $\mathcal{P}_\Theta=\set{\mathbb{P}_\theta}{\theta\in \Theta}\subset \mc P$, then the pathological distributions encountered earlier of the form $\alpha \delta_{\bar{\xi}} + (1-\alpha)\mb P_n$ or $(1-\exp(-r)) \delta_{\bar{\xi}} + \exp(-r)\mb P_n$ can often be dismissed out of hand and do not cause any further issues.
Similarly, we will argue that when leveraging the parametric structure in constructing ambiguity sets, the undesirable tendency of DRO formulations to emphasize large-loss scenarios disappears. 

More precisely, we study the problem of constructing confidence sets on the unknown distribution from data affected by corruption in its full generality. We introduce rigorously the problem in \cref{sec:parametric-estimation}. We seek \textit{minimal} confidence sets: that is the smallest possible sets that contains the unknown distribution with given desirable coverage guarantees. We prove that the (KL-TV) DRO ambiguity set, introduced by \cite{bennouna2022holistic}, is minimal in a strong sense---uniformly in the observed data.
We demonstrate that this result holds both
in the non-parametric case, and also the parametric case by constraining the ambiguity set to the parametric family. In particular, we show that the remarked conservatism of DRO approaches is an undesirable consequence of the adopted non-parametric perspective.
In \cref{sec:location-estimation} we study log-concave location parametric families often encountered in robust statistics.
We show that the DRO ambiguity set is never larger in terms of diameter than a confidence interval based on the classical estimator proposed by \cite{huber1964robust}. We do point out that such Huber confidence set is however \textit{worst-case} minimal by constructing an in which both these confidence sets are equal in terms of diameter. In \cref{sec:eventual-efficiency} we reveal that depending on the imposed coverage guarantees and corruption level this worst-case event may be a mirage and not not occur almost surely in which case a DRO ambiguity set is to be strictly preferred.

\subsection{Notation}

Given a compact set $S$ in a space equipped with norm $\| \cdot \|$ we will denote its Chebyshev center $\cheb(S)$ and radius $\radius(S)$ as the minimizer  and minimum in
\(
\min_{s\in S} \max_{s'\in S} \norm{s-s'},
\)
respectively. Given a set $K$ we denote its $\delta$-blowup with $K^\delta = \set{x}{\norm{y-x}\leq \delta, ~y\in K}$. 

\section{Confidence Set Estimation}
\label{sec:parametric-estimation}

We study the problem of estimating the unknown true distribution $\Ptrue$ from observed corrupted samples $\{\xi_1, \dots, \xi_n\}$.
Our objective is to construct a ``minimal'' confidence set that satisfies desired statistical coverage properties.

To study the problem in its full generality, we assume that the true distribution $\Ptrue = \Pb_{\thetatrue}$ belongs to a parametric family $\mathcal{P}_\Theta=\set{\mathbb{P}_\theta}{\theta\in \Theta}\subset \mc P$. The set  $\cP_\Theta$ imposes a structure on the distributions that may explain the observed data. 
Intuitively, learning from corrupted data should become easier once  structure is imposed on the data generating distribution as this structure may allow to distinguish gross outliers from genuine data points.
As the parametric set $\Theta$ is itself arbitrary $\cP_\Theta$ is quite flexible and ranges from completely non-parametric to the more restricted parametric families we discuss next.

\begin{example}[Non-Parametric]
    Let $\cP_\theta = \cP$, with $\Theta = \cP$, be the set of all distributions. This case corresponds to non-parametric estimation where no structure is assumed in the problem.
\end{example}

\begin{example}[Exponential Family]
  Let $T:\Xi\to \Re^d$ be a continuous function and $\mb P_0$ be a reference distribution. For each $\theta \in \Theta := \Re^d$, let $\mb P_\theta$ be such that
  \[
    \frac{\d \mb P_\theta}{\d \mb P_0} (\xi) = \exp(\theta\tpose T(\xi) -  A(\theta))\quad \forall \xi\in\Xi
  \]
  with cumulant function $A(\theta) = \log \int \exp(\theta\tpose T(\xi))\d \mb P_0$.
\end{example}

\begin{example}[Location Family]
  \label{ex:location}
  Let $\mb P_\theta$ and $\theta = \mu\in \Theta := \Re^d$ be such that
  \[
    \mb P_{\theta}(E) = \mb P_0(E-\mu) \quad \forall E\subseteq \Re^{d}
  \]
  where $\mb P_0$ is a reference distribution.
\end{example}

\begin{example}[Location-Scale Family]
  \label{ex:location-scale}
  Let $\mb P_\theta$ and $\theta = (\mu, \sigma)\in \Theta := \Re^d\times \Re_{++}$ be such that
  \[
    \mb P_{\theta}(E) = \mb P_0(\tfrac{(E-\mu)}{\sigma}) \quad \forall E\subseteq \Re^{d}
  \]
  where $\mb P_0$ is a reference distribution.
\end{example}

\subsection{Confidence Set Estimators and Coverage Guarantees}

To formalize our problem, we start by formally defining a confidence set estimator. We assume here that the order of the samples carries no statistical information and therefore we study estimators which are functions of the empirical distribution $\mb P_n$ rather than the data itself.

\begin{definition}[Confidence Set Estimators]
  \label{def: confidenceSet}
  A function $S:  \mathcal{P} \rightarrow \{ U \; : \; U \subseteq \Theta\}$ is called a confidence set estimator. A confidence set estimator $S$ is regular if the map $\mb P \mapsto S(\mb P)$ is continuous and closed-valued.
\end{definition}

A confidence set estimator is a set-valued mapping between elements in $\cP$ to subsets of parameters in $\Theta$.
A set estimator is desirable if it fences in the parameter of the true distribution $\theta^\star$ with ``high probability''. That is the probability that the event $\theta^\star \in S(\mb P_n)$ occurs is sufficiently large, where $\mb P_n$ denotes here the empirical distribution of the observed \textit{corrupted} data. In the non-parametric case, this is equivalent to $\Pb^\star \in S(\mb P_n)$. Formally, we say a confidence set estimator verifies a coverage guarantee with \resolution $r\geq 0$ and corruption level $\alpha \in [0,1]$ if
\begin{subequations}
 \label{eq:feasibility:parametric}
 \begin{align}
   \forall \thetatrue \in \Theta,~ \forall \Pcor \in \mathcal{P},~\D(\Pcor, \mb P_\thetatrue)\leq \alpha :&~ \limsup_{n \to \infty} \frac{1}{n} \log \Prob_{\Pcor}(\thetatrue \notin S(\mathbb{P}_n)) < -r.
   \label{eq:feasibility:parametric:r}\\
   \intertext{
That is, the probability of the confidence set fencing in the unknown parameter $\thetatrue$, for any corruption limited by $\alpha$, tends to one faster than $1-e^{-rn + o(n)}$.
We also treat the case  $r=0$ where we desire an asymptotic coverage guarantee with no specific convergence rate but where we have instead
}
\forall \thetatrue \in \Theta,~ \forall \Pcor \in \mathcal{P},~\D(\Pcor, \mb P_\thetatrue)\leq \alpha :&~ \limsup_{n \to \infty} \Prob_{\Pcor}(\thetatrue \notin S(\mathbb{P}_n)) =0. \label{eq:feasibility:parametric:0}
\end{align}
\end{subequations}

Notice that the guarantee \eqref{eq:feasibility:parametric} is required to hold \textit{for all distributions $\Pb_{\theta^\star}$ within the parametric family $\cP_\Theta$}. That is, the set estimator may use knowledge of (i) the data through $\Pb_n$, (ii) the fact that the data was generated from as samples from corrupted version of the unknown distribution and (iii) that the unknown distribution belongs to the parametric set $\cP_\Theta$. The last point is crucial as it will come to define what estimators verify such guarantees. The non-parametric case, $\cP_\theta = \cP$, imposes therefore rather strong requirements on our confidence set estimator which hold for all distributions $\Ptrue \in \cP$, while more restricted sets $\cP_\Theta$ are easier to comply with.
The statistical resolution $r$ and corruption level $\alpha$ are determine the strength of the desired guarantee imposed on our confidence set estimator. Here, the parameter $\alpha$ characterizes the resilience of our estimator to corruption, while $r$ sets the statistical power of its guarantee.  

We will focus in the sequel on confidence set estimators which are regular. 
Perhaps the most common class of confidence sets estimates are those of the form
\(
\hat{\mb P}\mapsto \tset{\theta\in\Theta}{\tnorm{\theta-\hat \theta(\hat{\mb P})}\leq \Delta(\hat {\mb P})}
\)
based on a parametric estimator $\hat{\theta} :\mc P\to\Theta$ and error function $\Delta:\mc P\to\Re_{++}$. In this setting (which we will encounter in Section \ref{sec:location-estimation}) regularity reduces to the continuity of the estimator and error function.
Finally, regularity can also quite naturally be interpreted as ensuring that the considered estimators are not overly sensitive to small changes to the observed empirical distribution and procedures enjoying such properties were studied already by \cite{mosteller1977data} who denoted such procedures as ``resistant''.

As pointed out before, coverage guarantees on a confidence estimator such as those stated in Equation \eqref{eq:feasibility:parametric} are a common occurrence in the robust optimization literature as they are sufficient to guarantee the out-of-sample performance of any downstream robust decision in (\ref{eq:dro}) based on the ambiguity set $S(\mathbb{P}_n)$.
As the trivial confidence set estimator $S(\mb P_n)=\Theta$ clearly points out, it is desirable to consider estimators which estimate ``small'' confidence sets. In particular, if two set estimators $S_1, S_2$ verify the coverage guarantee, and for all $\hat {\mb P} \in \mc P$, 
$S_1(\hat {\mb P}) \subseteq S_2(\hat{\mb P})$, then clearly $S_1$ is preferred over $S_2$ as the first estimator manages to fence in the unknown distribution within a smaller set. Given this observation it is now quite natural to try and find a ``minimal'' set estimator.

\subsection{A Uniformly Minimal Confidence Set}

Let us first build intuition on the set estimator we will introduce and which we allege is minimal. If $\mb P_n$ is an empirical distribution sampled from $\Pcor$, large deviation theory \citep{dembo2009large} ensures that $\KL(\mb P_n,\Pcor) \leq r$ with probability at least $e^{-rn+o(n)}$, when the support $\Xi$ is finite. Hence, if $\Pcor$ is a corrupted distribution verifying $\D(\Pcor,\Ptrue) \leq \alpha$, then the confidence set $\{\mb P \in \mc P \; : \; \exists \mb Q\in \mc P~\st~\KL(\mb P_n, \mb Q)\leq r,~\D(\mb Q, \mb P)\leq \alpha \}$ should contain the true distribution $\Ptrue$ with probability at least $1-e^{-rn+o(n)}$. Now, as the true distribution is known to belong to the parametric family $\cP_\Theta$, we can restrict the set to the family. It is therefore natural to introduce the (KL-TV) DRO set estimator
\begin{align}
  \label{eq:KL-confidence-estimator}
  \SpKL{r}{\alpha}: \hat{\mb P} \mapsto \set{\theta\in \Theta}{\exists \mb Q\in \mc P, ~\KL(\hat{\mb P}, \mb Q)\leq r, ~\D(\mb Q, \mb P_{\theta})\leq \alpha}.
\end{align}

Perhaps the first immediate remark one can make is that the DRO confidence set estimator is not necessarily regular; a point to which we will come back to later on. 
For now we associate with this ambiguity set the statistical resolution function
\begin{equation}
  \label{eq:resolution_function_set}
  r^\alpha(\hat {\mb P}, \theta) = \min \tset{\KL(\hat {\mb P}, \mb Q)}{\D(\mb Q, \mb P_\theta)\leq \alpha}, \quad \forall \theta \in \Theta, \; \forall \hat{\Pb} \in \cP,
\end{equation}
and remark indeed that we can write the ambiguity sets as its sublevel set, i.e., $\SpKL{r}{\alpha}(\hat {\mb P}) = \tset{\theta\in \Theta}{r^\alpha(\hat {\mb P}, \theta)\leq r}$ for any $r\geq 0$. Intuitively, $r^\alpha(\hat {\mb P}, \theta)$ returns the minimum radius so that the estimate $\theta$ is in the KL-TV ambiguity set centred around $\hat {\mb P}$, that is $\theta \in \SpKL{r^\alpha(\hat {\mb P}, \theta)}{\alpha}(\hat {\mb P})$.
As we indicate in the following example, the statistical resolution function provides considerable insights into the proposed confidence set estimator. In particular, the example shows that while the KL-TV confidence set $\SpKL{r}{\alpha}(\hat {\mb P})$ is convex in the non-parametric case $\cP_{\Theta} = \cP$, it can be non-convex in the parametric case.

\begin{example}[Normal Family]
  \label{ex:normal-family}
  Consider the normal location family $\mc P_\Theta = \set{N(\theta,1)}{\theta\in \Re}$ parametrized in the unknown mean $\theta\in\Re$ (see also Example \ref{ex:location}). Let's take $\theta^\star=0$ here and we will consider
  \( 
  \hat{\mb P} = (1-\alpha)N(\theta^\star, 1) + \alpha N(\theta^\star+\Delta, 1)
  \)
  for some $\Delta\gg 1$. 
  In Figure \ref{fig:resolution-example} we depict the function $\theta\mapsto r^\alpha(\hat {\mb P}, \theta)$ for $\Delta=5$ and $\alpha=0.3$.
  It can be remarked that $r^\alpha(\hat {\mb P}, \theta^\star) = 0$ and hence our confidence set estimator fences in the unknown parameter ($\theta^\star\in \SpKL{r}{\alpha}(\hat{\mb P})$) for any $r\geq 0$. It should also not come as a surprise that in the context of noise ($\alpha>0$) we have that the ambiguity set $\SpKL{0}{\alpha}(\hat{\mb P}) \neq \{ \theta^\star \} $ does not reduce to a singleton. Intuitively, due to the presence of noise the unknown parameter $\theta^\star$ can not be recovered exactly even with access to infinite data but rather any $\theta$ in  $\SpKL{0}{\alpha}(\hat{\mb P})$ (indicated as the red region in Figure \ref{fig:resolution-example}) is compatible with the observed distribution $\hat {\mb P}$. 
\end{example}

\begin{figure}[h!]
  \centering
  \includegraphics[height=6cm]{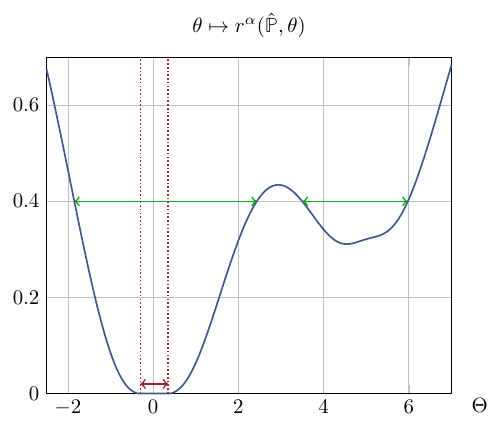}
  \caption{The statistical resolution function can be nonconvex resulting in nonconvex set estimators $\SpKL{r}{\alpha}(\hat {\mb P})$. For instance, with $\alpha = 0.3$, the set $\SpKL{0.4}{\alpha}(\hat{\mb P})$ ($= \{ \theta \; : \; r^\alpha(\hat{\Pb},\theta) \leq 0.4 \}$) indicated in green is the union of two disjoint intervals. The region in red denotes $\SpKL{0}{\alpha}(\hat{\mb P})$ identified as the roots of the statistical resolution function. The fact that the latter region is not a singleton indicates that learning the unknown parameter with corruption exactly is not possible, even with infinite data.}
  \label{fig:resolution-example}
\end{figure}

The following result shows that the KL-TV confidence set is \textit{uniformly minimal}: any regular confidence set estimator which satisfies the coverage guarantee (\ref{eq:feasibility:parametric}) must contain the KL-TV confidence set whatever the observed distribution.

\begin{theorem}[KL-TV Uniform Minimality]
  \label{thm: KLefficiency:noise}
  For any regular confidence set estimator $S$ verifying the coverage guarantee \eqref{eq:feasibility:parametric} with $r\geq 0$ we have $\SpKL{r}{\alpha}(\hat{\mb P}) \subseteq S(\hat{\mb P})$ for all $\hat{\mb P} \in \mc P$. 
\end{theorem}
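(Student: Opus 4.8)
The plan is to argue by contradiction, exploiting the fact that any $\theta_0 \in \SpKL{r}{\alpha}(\hat{\mb P})$ is, by construction, a parameter for which the \emph{observed} distribution $\hat{\mb P}$ is a plausible corrupted sampling distribution. Fix an arbitrary $\hat{\mb P}\in\cP$ and a parameter $\theta_0\in\SpKL{r}{\alpha}(\hat{\mb P})$; I want to show $\theta_0\in S(\hat{\mb P})$, so suppose toward a contradiction that $\theta_0\notin S(\hat{\mb P})$. Unfolding the definition \eqref{eq:KL-confidence-estimator} of the estimator, membership $\theta_0\in\SpKL{r}{\alpha}(\hat{\mb P})$ furnishes a distribution $\mb Q\in\cP$ with $\KL(\hat{\mb P},\mb Q)\le r$ and $\D(\mb Q,\mb P_{\theta_0})\le\alpha$. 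In particular $\mb Q$ is an admissible corruption of $\mb P_{\theta_0}$, so the pair $(\thetatrue,\Pcor)=(\theta_0,\mb Q)$ is a legitimate instance over which the coverage guarantee \eqref{eq:feasibility:parametric} must hold.

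First I would convert the pointwise assumption $\theta_0\notin S(\hat{\mb P})$ into a statement on a whole neighbourhood of $\hat{\mb P}$. Because $S$ is regular---upper hemicontinuous with closed values into a metrizable space---the correspondence has a closed graph, so the section $\set{\mb P\in\cP}{\theta_0\in S(\mb P)}$ is closed and its complement $N:=\set{\mb P\in\cP}{\theta_0\notin S(\mb P)}$ is open. By hypothesis $\hat{\mb P}\in N$, hence $N$ is an open neighbourhood of $\hat{\mb P}$ on which the estimator always fails to cover $\theta_0$. Consequently, whenever the empirical measure $\mb P_n$ lands in $N$ the coverage event fails, giving the pointwise lower bound $\Prob_{\mb Q}(\theta_0\notin S(\mb P_n))\ge\Prob_{\mb Q}(\mb P_n\in N)$.

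The remaining step controls $\Prob_{\mb Q}(\mb P_n\in N)$ from below under i.i.d.\ sampling from $\mb Q$. For $r>0$ I would invoke Sanov's theorem \citep{dembo2009large}: the empirical measure $\mb P_n$ satisfies a large deviation principle with rate function $\KL(\cdot,\mb Q)$, so the large-deviation lower bound for the open set $N$ yields
\[
\liminf_{n\to\infty}\tfrac{1}{n}\log\Prob_{\mb Q}(\mb P_n\in N)\;\ge\;-\inf_{\mb P\in N}\KL(\mb P,\mb Q)\;\ge\;-\KL(\hat{\mb P},\mb Q)\;\ge\;-r,
\]
where the middle inequality uses $\hat{\mb P}\in N$. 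Combined with the previous bound this forces $\limsup_{n\to\infty}\tfrac{1}{n}\log\Prob_{\mb Q}(\theta_0\notin S(\mb P_n))\ge -r$, contradicting the requirement \eqref{eq:feasibility:parametric:r} that this quantity be strictly below $-r$. For the boundary case $r=0$ the constraint $\KL(\hat{\mb P},\mb Q)\le 0$ forces $\mb Q=\hat{\mb P}$, and I would instead appeal to the law of large numbers: $\mb P_n\to\hat{\mb P}$ almost surely, so $\mb P_n\in N$ eventually, whence $\Prob_{\hat{\mb P}}(\theta_0\notin S(\mb P_n))\to 1$, contradicting \eqref{eq:feasibility:parametric:0}. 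Either way the assumption $\theta_0\notin S(\hat{\mb P})$ is untenable, proving $\SpKL{r}{\alpha}(\hat{\mb P})\subseteq S(\hat{\mb P})$.

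The crux---and the step I expect to require the most care---is the large-deviation input together with the topology on $\cP$ it presupposes. Sanov's lower bound in the clean form above is most transparent when $\Xi$ is finite (so $\cP$ is a simplex with its usual topology), matching the setting flagged in the discussion preceding the theorem; for general $\Xi$ one must pin down the topology in which $S$ is continuous and $\{\mb P_n\in N\}$ is an open event, and verify that Sanov's lower bound still applies there. The other delicate point is the passage from regularity to the closed-graph/open-section property, which relies on closed-valuedness together with upper hemicontinuity into a regular (metrizable) space of distributions.
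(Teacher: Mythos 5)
Your proposal is correct and follows essentially the same route as the paper's own proof: a contradiction argument in which regularity makes the non-coverage section $\set{\mb P}{\theta_0\notin S(\mb P)}$ open, Sanov's lower bound applied to the witness $\mb Q$ (with $\KL(\hat{\mb P},\mb Q)\leq r$, $\D(\mb Q,\mb P_{\theta_0})\leq \alpha$) handles $r>0$, and the Varadarajan almost-sure weak convergence of empirical measures handles $r=0$, where your observation that $\KL(\hat{\mb P},\mb Q)=0$ forces $\mb Q=\hat{\mb P}$ matches the paper's implicit use of the same fact. Your closing caveat about the topology underlying Sanov's lower bound on general $\Xi$ is a fair flag, but it does not change the argument, which the paper carries out identically.
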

\begin{appendixproof}[Proof of \cref{thm: KLefficiency:noise}]
  We prove this results for $r>0$ and $r=0$ separately.\\
  \textbf{Case $r>0$}:
  Let's assume for the sake of contradiction that there exists $\hat{\mb P}\in \mc P$ and $\theta\in\Theta$ so that $\theta\in \SpKL{r}{\alpha}(\hat{\mb P})$ but $\theta\not\in S(\hat{\mb P})$. By definition of $\theta\in\SpKL{r}{\alpha}(\hat{\mb P})$ we have that there exists $\Pb^c\in \mc P$ so that $\KL(\hat{\mb P}, \Pb^c)\leq r$ and $\D(\Pb^c, \mb P_{\theta})\leq \alpha$.
  Let
  \(
  \mc D(\theta) = \set{\mb P'}{\theta\not \in S(\mb P')}
  \)
  denote the set of all distributions so that the confidence set estimator $S$ does not contain $\theta$. By construction we have that $\hat{\mb P} \in \mc D(\theta)$ and the set $\mc D(\theta)$ is an open set as the confidence set estimator $\mc S$ is regular. Hence, a standard result of Sanov's large deviations lower bound \citep{dembo2009large} is that
  \begin{align*}
  \liminf_{n \to \infty} \frac{1}{n} \log \Prob_{\Pb^c}(\theta \notin S(\mathbb{P}_n))
  &=
  \liminf_{n \to \infty} \frac{1}{n} \log \Prob_{\Pb^c}(\mathbb{P}_n \in \cD(\theta))\\
  &\geq
  - \inf_{\mb P' \in \interior \mc D(\theta)} \KL(\mb P', \Pb^c) \geq -\KL(\hat{\mb P}, \Pb^c) \geq r
   \end{align*}
  directly contradicting the feasibility of the confidence set estimator $S$ in Equation \eqref{eq:feasibility:parametric:r} as $\D(\Pb^c, \mb P_{\theta})\leq \alpha$.

  \textbf{Case $r=0$:} Assume that there exists $\Pb^c\in \mc P$ and $\theta \in \Theta$ with $\theta \in \SpKL{0}{\alpha}(\Pb^c)$ but $\theta \not\in S(\Pb^c)$. From regularity of $S$ we have hence that $\mc D(\theta)=\tset{\hat {\mb P}}{\theta \not\in S(\hat {\mb P})}$ is an open set containing $\Pb^c$.
  Consequently, there exists $\epsilon>0$ so that $\set{\mb P'}{\LP(\mb P', \Pb^c)< \epsilon} \subseteq \mc D(\theta)$. Hence, we have from \cite{varadarajan1958convergence} that
  \[
    \liminf_{n \to \infty} \Prob_{\Pb^c}(\theta \notin S(\mathbb{P}_n)) \geq \liminf_{n \to \infty} \Prob_{\Pb^c}(\mb P_n \in \mc D(\theta)) \geq\liminf_{n \to \infty} \Prob_{\mb P^c}(\LP(\mb P_n, \Pb^c) < \epsilon) = 1
  \]
  where $\TV(\Pb^c, \mb P_\theta)\leq \alpha$ as $\theta \in \SpKL{0}{\alpha}(\Pb^c)$ directly contradicting the feasibility of the confidence set estimator $S$ in Equation \eqref{eq:feasibility:parametric:0}.
\end{appendixproof}

\cref{thm: KLefficiency:noise} is a rather strong result as it holds uniformly for any observed distribution.  
However, the set estimator $\SpKL{r}{\alpha}$ may itself not enjoy the coverage guarantee (\ref{eq:feasibility:parametric}) when our event set $\Xi$ is not discrete.
Indeed, even in a case without corruption ($\alpha=0$), we have that $\SpKL{r}{0}(\mb P_n)\subseteq \tset{\mb Q\in \mc P_\Theta}{\mb P_n\ll \mb Q}$ and hence the confidence set estimate $\SpKL{r}{0}(\mb P_n)$ fails to contain the true distribution if family $\cP_\Theta$ contains merely continuous distributions, as $\mb P_n$ is always a discrete measure supported on at most $n$ points.
Informally, hence it would appear that the set estimator $\SpKL{r}{\alpha}$ is simply too ``small'' to be useful outside of the case where $\Xi$ is discrete. This issue can be directly attributed to the lack of regularity of the KL distance when  $\Xi$ is continuous as pointed out by \citet{yang2018robust}.
However, as pointed out by \cite{dembo2009large, yang2018robust} and more recently by \cite{liu2023smoothed}, out-of-sample feasibility can be salvaged by considering a smoothed distance instead. For instance, let us define a smooth KL distance $\KL^\delta(\hat{\mb {P}}, \mb Q) = \inf_{\LP(\hat{\mb {P}}, \mb P')\leq \delta} \KL(\mb P', \mb Q)$, where $\LP$ is the L\'evy-Prokhorov metric \citep{prokhorov1956convergence}, and consider instead the confidence set estimator
\[
  \SpKL{r}{\alpha}^\delta: \hat{\mb P} \mapsto \tset{\theta\in \Theta}{\exists \mb Q\in\mc P~\st~\KL^\delta(\hat{\mb P}, \mb Q)\leq r,~\D(\mb Q, \mb P_\theta)\leq \alpha}.
\]

\begin{theorem}[Coverage Guarantee]
  \label{thm:feasibility:2}
  For any $\delta>0$, $\SpKL{r}{\alpha}^\delta$ satisfies the coverage guarantee \eqref{eq:feasibility:parametric} with \resolution $r\geq 0$.
\end{theorem}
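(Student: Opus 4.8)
The plan is to reduce the coverage-failure probability to a large-deviations event for the empirical measure and then apply Sanov's theorem; the smoothing by $\delta$ is precisely what forces the relevant rate to exceed $r$. Fix $\thetatrue\in\Theta$ and a corrupted distribution $\Pcor$ with $\D(\Pcor,\mb P_\thetatrue)\le\alpha$, and let $\mb P_n$ be the empirical distribution of $n$ i.i.d.\ samples drawn from $\Pcor$. The first and conceptually central step is a deterministic inclusion obtained by feeding the corrupted distribution itself, $\mb Q=\Pcor$, into the definition of $\SpKL{r}{\alpha}^\delta$: whenever $\KL^\delta(\mb P_n,\Pcor)\le r$ we automatically have $\thetatrue\in\SpKL{r}{\alpha}^\delta(\mb P_n)$, since the constraint $\D(\Pcor,\mb P_\thetatrue)\le\alpha$ holds by assumption. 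Taking contrapositives yields the bound, uniform in $\thetatrue$,
\[
\Prob_{\Pcor}\!\left(\thetatrue\notin \SpKL{r}{\alpha}^\delta(\mb P_n)\right)\le \Prob_{\Pcor}\!\left(\KL^\delta(\mb P_n,\Pcor)> r\right).
\]

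Next I would translate this smoothed-divergence event into a geometric statement in the L\'evy--Prokhorov metric. Writing $K_r=\set{\mb P'\in\mc P}{\KL(\mb P',\Pcor)\le r}$ for the KL sublevel set, which is compact since $\KL(\cdot,\Pcor)$ is a good rate function, the identity $\KL^\delta(\nu,\Pcor)=\inf_{\LP(\nu,\mb P')\le\delta}\KL(\mb P',\Pcor)$ together with attainment of the distance to the compact set $K_r$ gives the inclusion $\set{\nu}{\KL^\delta(\nu,\Pcor)>r}\subseteq F:=\set{\nu}{\LP(\nu,K_r)\ge\delta}$. Because $\nu\mapsto\LP(\nu,K_r)$ is $1$-Lipschitz, $F$ is closed, so Sanov's theorem---the large-deviations upper bound for closed sets in the weak topology, with good rate function $\KL(\cdot,\Pcor)$ \citep{dembo2009large}---yields
\[
\limsup_{n\to\infty}\tfrac1n\log\Prob_{\Pcor}(\mb P_n\in F)\le -\,m,\qquad m:=\inf_{\nu\in F}\KL(\nu,\Pcor).
\]

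The crux of the argument, and the step I expect to be the main obstacle, is to establish the \emph{strict} gap $m>r$ for $r>0$; this is where the smoothing earns its keep. Every $\nu\in F$ satisfies $\LP(\nu,K_r)\ge\delta>0$, hence $\nu\notin K_r$ and $\KL(\nu,\Pcor)>r$, but this only delivers $m\ge r$. To upgrade to a strict inequality I would again invoke that $\KL(\cdot,\Pcor)$ is a good rate function: a minimizing sequence $\nu_k\in F$ eventually lies in the compact sublevel set $\set{\nu}{\KL(\nu,\Pcor)\le m+1}$, so it admits a weakly convergent subsequence with limit $\nu_\star$; lower semicontinuity of $\KL(\cdot,\Pcor)$ gives $\KL(\nu_\star,\Pcor)\le m$, while closedness of $F$ gives $\nu_\star\in F$ and hence $\KL(\nu_\star,\Pcor)>r$. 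Thus $m=\KL(\nu_\star,\Pcor)>r$, and chaining the displays gives $\limsup_n\frac1n\log\Prob_{\Pcor}(\thetatrue\notin\SpKL{r}{\alpha}^\delta(\mb P_n))\le -m<-r$, which is exactly \eqref{eq:feasibility:parametric:r}. (If $F$ is empty the probability is zero and the bound is vacuous with $m=+\infty$.)

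Finally, the case $r=0$ is handled by the same reduction but with a convergence-in-probability argument in place of a large-deviations rate. Here Gibbs' inequality forces $K_0=\{\Pcor\}$, so the reduction collapses to $\Prob_{\Pcor}(\thetatrue\notin\SpKL{0}{\alpha}^\delta(\mb P_n))\le\Prob_{\Pcor}(\LP(\mb P_n,\Pcor)>\delta)$, and since the empirical measure converges weakly to $\Pcor$ almost surely \citep{varadarajan1958convergence} the right-hand side tends to $0$, establishing \eqref{eq:feasibility:parametric:0}.
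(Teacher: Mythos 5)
Your proof is correct and takes essentially the same route as the paper's: reduce coverage failure to the event $\KL^\delta(\mb P_n,\Pcor)>r$ via the witness $\mb Q=\Pcor$, apply Sanov's upper bound to a closed set containing this event, upgrade $m\geq r$ to the strict gap $m>r$ using compactness of the KL sublevel sets together with lower semicontinuity, and handle $r=0$ by \cite{varadarajan1958convergence}. The only (immaterial) difference is bookkeeping: the paper applies Sanov to $\cl(\mc C)$ and obtains the strict gap by contradiction with a sequence in $\mc C$, whereas you enclose the event in the manifestly closed set $\set{\nu}{\LP(\nu,K_r)\geq\delta}$ and argue directly with a minimizing sequence.
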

\begin{appendixproof}[Proof of \cref{thm:feasibility:2}]
  We prove this results for $r>0$ and $r=0$ separately.\\
  \textbf{Case $r>0$}:
  Fix a $\theta \in \Theta$, $\Pb^c \in \mathcal{P}$ with $\D(\Pb^c, \mb P_\theta)\leq \alpha$.
  We remark that by construction of the confidence set estimator $\SpKL{r}{\alpha}^\delta$ we have that $\theta \notin \SpKL{r}{\alpha}^\delta(\mathbb{P}_n)$ implies the event $\mb P_n \in \mc C = \tset{\mb P''\in \mc P}{\KL^\delta(\mathbb{P}'', \Pb^c)> r}$ must have occurred.
  Sanov's theorem guarantees that
  \begin{align*}
    \limsup_{n \to \infty} \frac{1}{n} \log \Prob_{\Pb^c}(\theta \notin  \SpKL{r}{\alpha}^\delta(\mathbb{P}_n)) \leq& \limsup_{n \to \infty} \frac{1}{n} \log \Prob_{\Pb^c}(\mb P_n \in \mc C)\\
    \leq & - \inf \set{\KL(\mathbb{P}'', \Pb^c)}{\mb P''\in \cl(\mc C)}.
  \end{align*}
  It remains to show that $\inf \set{\KL(\mathbb{P}'', \Pb^c)}{\mb P''\in \cl(\mc C)}>r$. We first remark that as the set $\cl(\mc C)$ is closed and the sublevel sets $\set{\mb P''}{\KL(\mb P'', \mb P^c)\leq r}$ are compact 
  the infimum is achieved at some $\mb P''^\star$. Hence, we have $\inf \set{\KL(\mathbb{P}'', \Pb^c)}{\mb P''\in \cl(\mc C)} = \KL(\mathbb{P}''^\star, \Pb^c)$. Suppose for the sake of contradiction that $\KL(\mathbb{P}''^\star, \Pb^c)\leq r$ than there exists $\mb P''_k\in \mc C$, and hence $\inf_{\LP(\mb P''_k, \mb P')\leq \delta} \KL(\mathbb{P}', \Pb^c)>r$, with limit $\mb P''^\star$.
  However, for $k$ large enough so that $\LP(\mb P''_k, \mb P''^\star)\leq \delta$ this would imply with $\KL(\mb P''^\star, \Pb^c)\leq r$ that $\KL^\delta(\mb P''_k, \Pb^c)\leq r$; a contradiction.

  \textbf{Case $r=0$.}  Fix a $\theta \in \Theta$, $\Pb^c \in \mathcal{P}$ with $\D(\Pb^c, \mb P_\theta)\leq \alpha$. We remark that by construction of the confidence set estimator $\SpKL{0}{\alpha}^\delta$ we have that $\theta \notin  \SpKL{0}{\alpha}^\delta(\mb P_n)$ implies the event $\mb P_n \in \mc C \defn \tset{\mb P''\in \mc P}{\KL^\delta(\mathbb{P}'', \Pb^c)> 0} \subseteq \set{\mb P''\in \mc P}{\LP(\mb P'', \Pb^c) > \delta}$ has occurred. Hence, from \cite{varadarajan1958convergence} we have
  \[
   \limsup_{n\to\infty} \Prob_{\Pb^c}(\theta \notin  \SpKL{0}{\alpha}^\delta(\mb P_n))  \leq \limsup_{n\to\infty} \Prob_{\Pb^c}(\mb P_n \in \mc C) \leq \limsup_{n\to\infty}\Prob_{\Pb^c}(\LP(\mb P_n, \Pb^c) > \delta) =0
 \]
 establishing the claim.
\end{appendixproof}

The necessity for this slight blowup was already remarked by \cite{dembo2009large} in the context of universal hypothesis testing. We observed that $\SpKL{r}{\alpha} (\hPb) \subseteq \SpKL{r}{\alpha}^\delta(\hPb) \subset \SpKL{r}{\alpha}^{\delta'}(\hPb)$ for any $\delta' >\delta>0$ and $\hPb \in \mc P$---in particular, $\SpKL{r}{\alpha}=\SpKL{r}{\alpha}^0$. As we now indicate the additional blowup is negligible as the estimator $\SpKL{r}{\alpha}$ can be recovered as the limiting case $\delta\downarrow 0$.

\begin{theorem}[Limit]
  \label{thm:approximation:B}
  We have
  $\lim_{\delta\downarrow 0}\SpKL{r}{\alpha}^\delta( \hat{\mb P}) \defn \cap_{\delta > 0} \SpKL{r}{\alpha}^\delta( \hat{\mb P}) = \SpKL{r}{\alpha}(\hat{\mb P})$ for all $\hat{\mb P}\in\mc P$.
\end{theorem}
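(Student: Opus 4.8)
The plan is to split the equality into the two inclusions, dispatch one by monotonicity, and reduce the other to a limiting argument driven by lower semicontinuity. For the easy inclusion, taking $\mb P' = \hat{\mb P}$ in the infimum defining $\KL^\delta$ gives $\KL^\delta(\hat{\mb P}, \mb Q) \le \KL(\hat{\mb P}, \mb Q)$ for every $\mb Q$ and every $\delta > 0$, so any witness $\mb Q$ certifying $\theta \in \SpKL{r}{\alpha}(\hat{\mb P})$ also certifies $\theta \in \SpKL{r}{\alpha}^\delta(\hat{\mb P})$. Hence $\SpKL{r}{\alpha}(\hat{\mb P}) \subseteq \SpKL{r}{\alpha}^\delta(\hat{\mb P})$ for all $\delta>0$, and therefore $\SpKL{r}{\alpha}(\hat{\mb P}) \subseteq \cap_{\delta > 0}\SpKL{r}{\alpha}^\delta(\hat{\mb P})$, matching the nesting remark preceding the statement.

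For the reverse inclusion I would fix $\theta \in \cap_{\delta > 0}\SpKL{r}{\alpha}^\delta(\hat{\mb P})$ and instantiate the membership at $\delta = 1/k$. Unfolding the definition of $\KL^{1/k}$, for each $k$ I extract a pair $(\mb P'_k, \mb Q_k)$ with
\[
\LP(\hat{\mb P}, \mb P'_k) \le 1/k, \qquad \KL(\mb P'_k, \mb Q_k) \le r + 1/k, \qquad \D(\mb Q_k, \mb P_\theta) \le \alpha,
\]
where the slack $1/k$ absorbs the possibility that the infimum defining $\KL^{1/k}$ is not attained. By construction $\mb P'_k \to \hat{\mb P}$ weakly, and the goal is to produce a single limiting witness $\mb Q$ with $\KL(\hat{\mb P}, \mb Q) \le r$ and $\D(\mb Q, \mb P_\theta) \le \alpha$, which places $\theta \in \SpKL{r}{\alpha}(\hat{\mb P})$.

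Assuming $(\mb Q_k)$ admits a weakly convergent subsequence $\mb Q_{k_j} \to \mb Q$, the limit passage is immediate. Joint weak lower semicontinuity of the Kullback--Leibler divergence (a standard property, in contrast to its failure of continuity noted earlier) yields $\KL(\hat{\mb P}, \mb Q) \le \liminf_j \KL(\mb P'_{k_j}, \mb Q_{k_j}) \le \liminf_j (r + 1/k_j) = r$, while lower semicontinuity of the total variation distance---already invoked in defining $\SpKL{r}{\alpha}$---gives $\D(\mb Q, \mb P_\theta) \le \liminf_j \D(\mb Q_{k_j}, \mb P_\theta) \le \alpha$. Both constraints survive the limit, so $\mb Q$ is the required witness.

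The hard part will be the extraction of a weakly convergent subsequence, i.e.\ the tightness of $(\mb Q_k)$. Neither constraint secures it on its own: the bound $\KL(\mb P'_k, \mb Q_k) \le r + 1/k$ does not prevent $\mb Q_k$ from shedding mass to infinity, since mass placed where $\mb P'_k$ is negligible costs nothing in $\KL(\mb P'_k, \cdot)$, and the bound $\D(\mb Q_k, \mb P_\theta) \le \alpha$ only controls mass up to the level $\alpha$. When $\Xi$ is compact the difficulty evaporates, as $\mc P$ is then weakly compact by Prokhorov's theorem. In the general case I would argue that, combining tightness of $(\mb P'_k)$ with the total variation bound, any escaping mass is at most $\alpha$, and then reattach the escaped portion to build a genuine probability measure $\mb Q$ that still obeys both constraints, exploiting that $\KL(\hat{\mb P}, \cdot)$ only decreases when the second argument is enlarged. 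Verifying that this reattachment preserves $\D(\mb Q, \mb P_\theta) \le \alpha$ while keeping $\KL(\hat{\mb P}, \mb Q) \le r$ is the step I expect to be technically involved.
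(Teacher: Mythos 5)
Your decomposition and limit passage coincide exactly with the paper's: the easy inclusion is the same monotonicity observation (which the paper states in the remark preceding the theorem), and for the hard inclusion the paper likewise extracts witnesses $(\mb P^{\delta_k}, \mb Q^{\delta_k})$ along $\delta_k \downarrow 0$ and concludes by joint lower semicontinuity of $\KL$ and $\TV$ once a weakly convergent subsequence of $(\mb Q^{\delta_k})$ is in hand (your $1/k$ slack even treats an attainment point the paper elides). The genuine gap is the step you explicitly leave open: you never establish that a limiting witness exists, and this is not a routine verification --- it is the entire technical content of the paper's proof. The paper closes it using \emph{only} the KL constraint, via a two-set data-processing argument: given $\epsilon>0$, choose $\epsilon'>0$ with $\epsilon'\log(\epsilon'/\epsilon)+(1-\epsilon')\log\bigl(\tfrac{1-\epsilon'}{1-\epsilon}\bigr)>r$, take a compact $K_1$ with $\hat{\mb P}[\Xi\setminus K_1]<\epsilon'/2$ and its blowup $K_1'=K_1^{\epsilon'/2}$; the L\'evy--Prokhorov bound gives $\mb P^{\delta_k}[\Xi\setminus K_1']<\epsilon'$ for large $k$, and then the binary divergence associated with the partition $\{K_1',\Xi\setminus K_1'\}$, which lower-bounds $\KL(\mb P^{\delta_k},\mb Q^{\delta_k})\leq r$, forces $\mb Q^{\delta_k}[\Xi\setminus K_1']<\epsilon$. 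The mechanism you missed when asserting that mass placed where $\mb P'_k$ is negligible ``costs nothing'' is that what is expensive in $\KL(\mb P^{\delta_k},\cdot)$ is not where the escaped mass lands but the fact that the second argument thereby \emph{removes} mass from the bulk set carrying all but $\epsilon'$ of $\mb P^{\delta_k}$.

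That said, your skepticism is quantitatively on target, and the repair you sketch is not gratuitous. Since $\epsilon'\log(\epsilon'/\epsilon)+(1-\epsilon')\log\bigl(\tfrac{1-\epsilon'}{1-\epsilon}\bigr)\leq\log\bigl(\tfrac{1}{1-\epsilon}\bigr)$ for $\epsilon'\leq\epsilon$, the paper's smallness condition on $\epsilon'$ is satisfiable only when $\epsilon>1-e^{-r}$, so this argument caps the escaping mass at $1-e^{-r}$ rather than at zero --- precisely the cap you intuited. Indeed, arbitrary witness sequences need not be tight: with $\hat{\mb P}=\mb P^{\delta_k}=\mb P_\theta$ and $\mb Q^{\delta_k}=(1-\beta)\mb P_\theta+\beta\,\delta_{x_k}$ with $x_k\to\infty$, both constraints $\KL(\mb P^{\delta_k},\mb Q^{\delta_k})=\log\tfrac{1}{1-\beta}\leq r$ and $\TV(\mb Q^{\delta_k},\mb P_\theta)=\beta\leq\alpha$ hold for every $k$ whenever $0<\beta\leq\min(\alpha,1-e^{-r})$, yet no subsequence converges weakly in $\mc P$. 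A complete proof must therefore either select witnesses judiciously or carry out the vague-limit-plus-reattachment surgery you outline (redistributing the escaped mass onto the region where $\mb P_\theta$ exceeds the defective limit, and verifying both constraints survive --- the $\TV$ check being the delicate one, as you note). Since your write-up does neither, the theorem remains unproven in your proposal, with the unexecuted step sitting exactly where all the difficulty lives.
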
  
\begin{appendixproof}[Proof of \cref{thm:approximation:B}]
  Consider any $\theta \in \cap_{\delta > 0} \SpKL{r}{\alpha}^\delta( \mb P')$. That is, let $\delta_k>0$  tend to zero and then we have that $\KL(\mb P^{\delta_k}, \mb Q^{\delta_k})\leq r$ and $\TV(\mb Q^{\delta_k}, \mb P_\theta)\leq \alpha$ for some $\mb Q^{\delta_k}\in \mc P$ and $\mb P^{\delta_k}\in \mc P$ with $\LP(\mb P', \mb P^{\delta_k})\leq \delta_k$. The last inequality implies that $\mb P^{\delta_k}$ converges to $\mb P'$. We will show that $\mb Q^{\delta_k}$ has a subsequential limit $\mb Q\in \mc P$.
  As the Kullback-Leibler divergence and total variation distance are lower semicontinuous we have
  \[
    \KL(\mb P', \mb Q) \leq \liminf_{k\to\infty}\KL(\mb P^{\delta_k}, \mb Q^{\delta_k}) \leq r, \quad \TV(\mb Q, \mb P_\theta) \leq \liminf_{k\to\infty}\D(\mb Q^{\delta_k}, \mb P_\theta) \leq \alpha
  \]
  and hence $\theta\in \SpKL{r}{\alpha}(\mb P')$.
  
  We now show that $\mb Q^{\delta_k}$ has indeed a subsequential limit $\mb Q\in \mc P$. It suffices to show that $\mc Q = \cl(\cup_{k\geq 0} \mb Q^{\delta_k})$ is compact. By \cite{prokhorov1956convergence} this is equivalent to showing that $\mc Q$ is tight. Given $\epsilon>0$, we show how to construct $K$ so that for all $\mb Q\in \mc Q$ we have $\mb Q[\Xi\setminus K] < \epsilon$.
  Let us define $\epsilon'>0$ sufficiently small so that
  \begin{equation}
    \label{eq:sufficiently-small-epsilon}
    \log\left(\frac{\epsilon'}{\epsilon}\right)\epsilon' + \log\left(\frac{1-\epsilon'}{1-\epsilon}\right)(1-\epsilon')>r.
  \end{equation}
  Let $K_1$ be such that we have
  \(
  \mb P'[\Xi\setminus K_1] < \epsilon'/2
  \)
  and $k_1$ so that for all $k\geq k_1$ we have $\delta_k<\epsilon'/2$.
  Define the compact set $K_1'=K_1^{\epsilon'/2}$.
  We have for all $k\geq k_1$ that
  \(
    \mb P^{\delta_k}[\Xi\setminus K'_1] - \mb P'[\Xi\setminus K_1] \leq \mb P^{\delta_k}[\Xi\setminus K^{\delta_k}_1] - \mb P'[\Xi\setminus K_1] \leq \LP(\mb P', \mb P^{\delta_k})\leq \delta_k
  \)
  and hence $\mb P^{\delta_k}[\Xi\setminus K'_1]< \epsilon'$. We also that
  \[
    \log\left(\frac{\mb P^{\delta_k}[K'_1]}{\mb Q^{\delta_k}[K'_1]}\right)\mb P^{\delta_k}[K'_1] + \log\left(\frac{\mb P^{\delta_k}[\Xi\setminus K'_1]}{\mb Q^{\delta_k}[\Xi\setminus K_1]}\right)\mb P^{\delta_k}[\Xi\setminus K'_1] \leq \KL(\mb P^{\delta_k}, \mb Q^{\delta_k})\leq r.
  \]
  and hence combined with $\mb P^{\delta_k}[\Xi\setminus K'_1]< \epsilon'$ and Equation (\ref{eq:sufficiently-small-epsilon}) it follows that we must have
  \(
  \mb Q^{\delta_k}[K'_1]<\epsilon
  \)
  for all $k\geq k_1$. Let now $K_2$ be such that for all $0\leq k < k_1$ it follows that $\mb P^{\delta_k}[\Xi\setminus K_2]< \epsilon'$. We then have for all $k\geq 0$ that
  \(
  \mb P^{\delta_k}[\Xi\setminus (K'_1 \cup K_2)]< \epsilon'.
  \)
\end{appendixproof}

The previous implies in a sense that the KL-TV confidence set $\SpKL{r}{\alpha}$ can more accurately be thought of as ``infimal'' rather than ``minimal'' in the problem of finding the smallest confidence set estimator verifying the coverage guarantee (\ref{eq:feasibility:parametric}). That is, it is smaller than any other regular confidence set estimator as pointed out in \cref{thm: KLefficiency:noise} and, although not verifying the guarantee itself, it is the natural limit within a family of slightly larger confidence set predictors ($\SpKL{r}{\alpha}^\delta$ for $\delta>0$) verifying the guarantee, as pointed out by \cref{thm:approximation:B}.

\subsection{On the Shape and Tractability of Minimal Confidence Sets}
The minimality of the KL-TV confidence set, by \cref{thm: KLefficiency:noise}, provides several interesting insights. 
In the non-parametric case---typically considered in DRO---the KL-TV confidence set is uniformly minimal despite being quite ``large'' given that it needs to contain several pathological distributions as we have pointed out in the introduction. The reason for this is that when no structure is known on the true distribution, any arbitrary distribution that is statistically consistent with our data must be included in the ambiguity set. This leads to several pathological phenomena in DRO. For example, outliers in the data are now the most relevant data points, as they strongly influence the loss, and as they are no less likely due to corruption than the rest of the data points can not be rejected. This explains why classical DRO ambiguity sets tend to emphasize outliers, rather than suppressing them as would be the norm in classical robust statistics.

In the next section, we will show that under parametric structure, these pathological cases vanish from the KL-TV confidence set. Furthermore, the estimator becomes in fact intimately related to classical robust statistics approaches, namely the Huber estimator. This implies that the remarked ``conservativeness'' of DRO approaches is not inherently related to the DRO perspective, but rather a natural requirement of the non-paramteric setting typically considered.

A natural question is now if the (parametric) KL-TV confidence set is uniformly minimal, then why would we use anything else than a KL-TV confidence (or its blowup)? While minimal, the KL-TV confidence set is generally not tractable under a parametric restriction. Indeed, while convex in the non-parametric case, it is no longer convex under a parametric restriction (see also Example \cref{ex:normal-family}).
As the KL-TV confidence set is proven to be minimal, we will refer to it in the next sections as the DRO confidence set, to distinguish it from other types of robust estimators.

\section{Location Estimation in Logconcave Families}
\label{sec:location-estimation}

In the previous section we studied general parametric families corrupted by an adversary characterized by the total variation distance.
In this section, we specialize our considered estimators to the classical setting of robust statistics in which the randomness is univariate $\xi \in \Xi = \Re$ and
$\theta \in \Theta=\Re$ is the location parameter in a univariate location family.
Furthermore, we let the distribution $\mathbb{P}_0$ characterizing our location family be an absolutely continuous log-concave generating distribution with density function $g:\Xi \to \Re_{++}$ symmetric around the origin. 
The parametric distribution $\mb P_\theta$ has density $g(\cdot - \theta)$. In other words, if $\Tilde{\xi} \sim \Pb_\theta$, then $\Tilde{\xi} - \theta \sim \Pb_0$.

\begin{example}[Location Families]\label{ex: location families}
  The following are particular instances of Example \ref{ex:location}.
    \begin{enumerate}
        \item Normal Distributions: $g: \xi \mapsto \frac{1}{\sigma\sqrt{2\pi}} e^{-\frac{1}{2}\left( \tfrac{\xi}{\sigma}\right)^2}$ with $\sigma > 0$. 
        \item Logistic Distributions: $g: \xi \mapsto \tfrac{e^{-\xi/\sigma}}{\left(\sigma\left( 1+ e^{-\tfrac{\xi}{\sigma}}\right)^2\right)}$ with $\sigma>0$. 
         \item Laplace Distribution: $g: \xi \mapsto \frac{1}{2\sigma} e^{\tfrac{\abs{\xi}}{\sigma}}$ with $\sigma>0$.
    \end{enumerate}
\end{example}

We will not discuss whether or not the parametric restriction discussed in this section is justifiable in practice.
We study such setting as it is central to the robust statistics litterature and in particular the seminal work of \cite{huber1964robust}. Our focus will be to show from first first principles, that in this natural setting, the minimal DRO confidence set $\SpKL{r}{\alpha}$ is intimately related to the classical robust estimators of \cite{huber1964robust}.

This parametric setting is a special case of the family introduced in Example \ref{ex:location} for which we have shown that the confidence set $\SpKL{r}{\alpha}$ is ``minimal'' in the sense discussed in previous sections.
However, this confidence set estimator is not practical and lacks tractability. A natural question is therefore then whether we can design in this setting a more practical confidence set that preserves, to the best possible, the statistical properties of the DRO confidence set. As the parameter set $\Theta \subset \Re$ here is one dimensional, a natural candidate for a simpler subclass of confidence set estimators is the class of confidence intervals.
Clearly, the DRO confidence set does not always belong to this class as it can be non-convex; see Example \ref{ex:normal-family} and Figure \ref{fig:resolution-example}.
The next section discusses the restriction of our problem of finding minimal confidence set estimators with given coverage guarantees to the case of confidence intervals.

\subsection{Confidence Intervals and Estimators}

A confidence interval can be identified through its center---an estimator---and radius. Let us first define estimators in this context.

\begin{definition}[Estimator]
  A function $E:\mc P\to \Theta$ is denoted an estimator. An estimator is denoted as regular if the mapping $E$ is continuous for the weak topology. 
\end{definition}

An estimator naturally defines a (fixed-width) confidence interval as $[E(\mb P_n) - \Delta, E(\mb P_n) + \Delta]$ for a given radius $\Delta>0$. Within this class of set estimators, the coverage guarantee \eqref{eq:feasibility:parametric} becomes the following condition on the estimator $E$ and the confidence interval radius $\Delta$
\begin{subequations}
 \label{eq:point-estimate:guarantee:noise}
 \begin{align}
   \forall \thetatrue \in \Theta,~ \forall \Pcor \in \mathcal{P},~\TV(\Pcor, \mb P_\thetatrue)\leq \alpha :&~ \limsup_{n \to \infty} \frac{1}{n} \log \Prob_{\Pcor}(|E(\mb P_n)- \thetatrue|> \Delta) < -r \label{eq:point-estimate:guarantee:noise:r}\\
   \intertext{for some statistical resolution $r>0$. 
The previous inequality guarantees that the fixed width confidence interval $[E(\mb P_n) - \Delta, E(\mb P_n) + \Delta]$ around the considered estimator fences in the true paramter $\thetatrue$ with probability increasing exponentially fast to one at speed exceeding $1-e^{-rn+o(n)}$. 
We will also discuss the special case $r=0$ in which we require merely}
   \forall \thetatrue \in \Theta,~ \forall \Pcor \in \mathcal{P},~\TV(\Pcor, \mb P_\thetatrue)\leq \alpha :&~\limsup_{n \to \infty}  \Prob_{\Pcor}(|E(\mb P_n)- \thetatrue|> \Delta) =0. \label{eq:point-estimate:guarantee:noise:0}
 \end{align}
\end{subequations}

For a given estimator $E$, we naturally seek the smallest associated confidence interval which verifies the coverage guarantee \eqref{eq:point-estimate:guarantee:noise} at a given statistical resolution $r$ and under corruption level $\alpha$.  We define the radius of this interval\footnote{Note that as the radius is defined as an infimum, the guarantee \eqref{eq:point-estimate:guarantee:noise} does not hold necessarily at $\Delta=\rad{E}{r}{\alpha}$ but rather only at all $\Delta > \rad{E}{r}{\alpha}$.} here as
$$
\rad{E}{r}{\alpha} = \inf\{\Delta \geq 0 \; : \; E,\Delta \; \rm{satisfy~the~coverage~ guarantee~ \eqref{eq:point-estimate:guarantee:noise}}\}.
$$
Clearly, with a given statistical resolution $r$ and corruption level $\alpha$, the estimator $E$ with smallest radius $\rad{E}{r}{\alpha}$ is preferred, and is precisely what we are looking for.

\subsection{Examples}

Before proceeding any further, let us present some examples of estimators and their associated confidence intervals. The most straightforward such estimator is the empirical mean defined as $E(\Pb_n) = \Eb_{\Pb_n}[\xi]$. For the paramteric family of normal distributions (Example \eqref{ex: location families}, 1), the Chernoff concentration bound gives
\[
\Prob_{\Pb_{\theta^\star}}\left( \abs{E(\Pb_n) - \theta^\star} > \Delta\right) \leq 2e^{-\frac{n\Delta^2}{2\sigma^2}},
\]
for all $\Delta\geq0$.
This implies, by chosing $\Delta=\sigma \sqrt{2r}$, that $\rad{E}{r}{0} \leq \sigma \sqrt{2r}$. Hence, the mean is a reasonable estimator in the absence of corruption, however, as already \citet{huber1981robust} remarks, it is inadequate in the presence of corruption ($\alpha>0$). In fact, moving any fraction of the data points can change the empirical mean arbitrarily, hence $\rad{E}{r}{\alpha} = \infty$ for any $\alpha>0$. Given that observation, a more sensible estimator under corruption is the median.

\begin{example}[Median]
  \label{exp:median}
  The median  $E^\median$ is an estimator verifying $\Eb_{\hat \Pb}\left[ \sign(\xi - E^\median(\hat \Pb)) \right] = 0$ for any $\hat \Pb \in \cP$.  
\end{example}
In \cref{sec:median}, we discuss in details the confidence interval induced by the median and will show in a precise sense that it reaches the smallest possible radius in the case of $r=0$ and $\alpha>0$.
Both these estimators belong to the general class of $\varphi-$estimators.

\begin{definition}[$\varphi-$Estimators]
    Let $\varphi:\Re\to\Re$ be an odd non-decreasing function. A $\varphi$-Estimator, denoted as $E_{\varphi}$, is any estimator verifying $\Eb_{\hat \Pb}\left[\varphi(\xi - E_\varphi(\hat {\mb P}))\right] = 0$ for all $\hat {\mb P} \in \mc P$.
\end{definition}
The mean is then a $\varphi$-estimator with $\varphi(\xi) = \xi$, for all $\xi$, and the median with $\varphi(\xi) = \sign(\xi)$.
We remark that $\varphi$-estimators are desirable as they are tractable. In fact, they can be characterized as the minimizer of a convex optimization. 

\citet[Theorem 2.6]{huber1981robust} shows that the particular class of $\varphi$-estimators is regular whenever the function $\varphi$ is bounded and strictly monotone. Clearly, the empirical mean is not regular. Close inspection show that in fact the median is also not regular as it is not uniquely defined for some distributions. It is however widely known that for an $\varphi$-estimator to be resilient against corruption its characterizing influence function $\varphi$ needs to be bounded \citep{huber1981robust} ruling out the mean as a sensible estimator in the context of data corruption. Despite its lack of regularity we show in \cref{prop:lower-bound-gross-error-margin-noise} that the median is resilient against corruption when strong statistical guarantees are not required---a fact unsurprising in the light of \cite{huber1981robust}.

In the context of designing confidence intervals in the robust statistics literature, a celebrated $\varphi-$estimator resilient against corruption is the Huber estimator \citep[Section 10.7]{huber1981robust}.

\begin{example}[Huber Estimator]\label{eg:huber-estimator}
    The Huber estimator $E_{\delta, k}^\huber$ is defined as the estimator associated with
    \begin{equation*}
        \phihuber_{\delta,k}(\xi) = 
        \begin{cases}
          -k \quad &\text{if} \; \log \left(\tfrac{g(\xi-\delta)}{g(\xi+\delta)}\right) \leq -k,\\
          \log \left(\tfrac{g(\xi-\delta)}{g(\xi+\delta)}\right) \quad &\text{if} \; -k\leq \log \left(\tfrac{g(\xi-\delta)}{g(\xi+\delta)}\right) \leq k, \\
          k \quad &\text{if} \; \log \left(\tfrac{g(\xi-\delta)}{g(\xi+\delta)}\right) \geq k        
        \end{cases}
    \end{equation*}
with parameters $\delta,k\geq 0$.
\end{example}

The Huber estimator was first defined in the seminar paper \cite[Theorem 1]{huber1964robust} in a slightly different form as $\varphi_k(\xi) = -\tfrac{g'(\xi)}{g(\xi)}$  if $\abs{\tfrac{g'(\xi)}{g(\xi)}} \leq k$; $k$ if $\tfrac{g'(\xi)}{g(\xi)} \geq k$; and $-k$ if $\tfrac{g'(\xi)}{g(\xi)} \leq -k$ in the context of designing estimators minimizing maximal asymptotic variance under corruption. This estimator can be recovered intuitively as the limit of $\phihuber_{\delta,k}/(2\delta)$ when $\delta \to 0$.
In the case of normal distributions discussed in Example \ref{ex: location families}(1) the Huber influence function  can be written as
    \begin{equation*}
        \phihuber_{\delta, k}(\xi) = 
        \begin{cases}
          -k \quad &\text{if} \; 2\delta \xi / \sigma^2 \leq -k,\\
          2\delta \xi / \sigma^2 \quad &\text{if} \; \abs{2\delta \xi / \sigma^2} \leq k, \\
          k \quad &\text{if} \; 2\delta \xi / \sigma^2 \geq k 
        \end{cases}
    \end{equation*}
for $\delta, k\geq 0$.

\subsection{A Fundamental Lower bound and a Worst-case Minimal Estimator}
\label{sec:fund-lower-bound}

In the search for minimal (fixed width) confidence intervals verifying the coverage guarantees \eqref{eq:point-estimate:guarantee:noise}, we will first prove a fundamental lowerbound on the radius of any such confidence interval. This lower bound happens to be the largest radius of the DRO confidence set estimator \eqref{eq:KL-confidence-estimator} denoted here as
\begin{equation}\label{eq: KL radius}
  \KLrad{r}{\alpha} \defn \textstyle\sup_{\hat{\mb P} \in \mc P}\radius(\SpKL{r}{\alpha}(\hat{\mb P})).
\end{equation}
This is not surprising given the results of the previous section, as we showed the DRO confidence set to be minimal for verifying the coverage guarantees.

\begin{theorem}\label{thm:efficiency:location}
  For any regular estimator $E$, we have $\rad{E}{r}{\alpha} \geq \KLrad{r}{\alpha}$ for all $r,\alpha \geq 0$.
\end{theorem}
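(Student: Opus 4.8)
The plan is to recognise a fixed-width confidence interval as nothing but a particular regular confidence set estimator, and then to let the uniform minimality of \cref{thm: KLefficiency:noise} do all the work. Concretely, fix any $\Delta > \rad{E}{r}{\alpha}$ and consider the set-valued map $S_\Delta : \hat{\mb P} \mapsto [E(\hat{\mb P}) - \Delta, E(\hat{\mb P}) + \Delta]$. Since the event $\thetatrue \notin S_\Delta(\mb P_n)$ is precisely $|E(\mb P_n) - \thetatrue| > \Delta$, the coverage guarantee \eqref{eq:point-estimate:guarantee:noise} for the pair $(E,\Delta)$ is exactly the general coverage guarantee \eqref{eq:feasibility:parametric} specialised to $S_\Delta$. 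By definition of $\rad{E}{r}{\alpha}$ as an infimum (and the upward closedness of the admissible radii noted in the accompanying footnote), the guarantee \eqref{eq:point-estimate:guarantee:noise} holds at every $\Delta > \rad{E}{r}{\alpha}$, so $S_\Delta$ verifies \eqref{eq:feasibility:parametric}.

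Next I would verify that $S_\Delta$ is regular in the sense of \cref{def: confidenceSet}. As $E$ is a regular estimator it is weakly continuous, and the auxiliary map $c \mapsto [c - \Delta, c + \Delta]$ is closed-valued and continuous; composing the two shows that $\hat{\mb P} \mapsto S_\Delta(\hat{\mb P})$ is continuous and closed-valued. \cref{thm: KLefficiency:noise} then applies verbatim and yields the uniform containment $\SpKL{r}{\alpha}(\hat{\mb P}) \subseteq S_\Delta(\hat{\mb P}) = [E(\hat{\mb P}) - \Delta, E(\hat{\mb P}) + \Delta]$ for every $\hat{\mb P} \in \mc P$. Because $\SpKL{r}{\alpha}(\hat{\mb P})$ is thereby enclosed in an interval of radius $\Delta$ centred at $E(\hat{\mb P})$, its Chebyshev radius obeys $\radius(\SpKL{r}{\alpha}(\hat{\mb P})) \leq \Delta$. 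Taking the supremum over $\hat{\mb P}$ gives $\KLrad{r}{\alpha} \leq \Delta$, and as $\Delta > \rad{E}{r}{\alpha}$ was arbitrary, letting $\Delta \downarrow \rad{E}{r}{\alpha}$ delivers $\KLrad{r}{\alpha} \leq \rad{E}{r}{\alpha}$, which is the assertion.

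The one step that warrants care is the passage from containment to the bound $\radius(\SpKL{r}{\alpha}(\hat{\mb P})) \leq \Delta$: the set $\SpKL{r}{\alpha}(\hat{\mb P})$ may be non-convex, a union of disjoint intervals as in \cref{ex:normal-family}, and its enclosing interval need not be centred at a point of the set. The inequality is immediate once $\radius$ is read as the radius of the smallest enclosing ball, witnessed here by the centre $E(\hat{\mb P})$, and I would make that reading explicit to avoid any ambiguity in the definition of the Chebyshev radius. Two minor points close the argument: when $\rad{E}{r}{\alpha} = \infty$ (as for the empirical mean under $\alpha > 0$) the claimed inequality is vacuous, so one may assume the radius finite and work with finite $\Delta$; and the split of \eqref{eq:feasibility:parametric} and \eqref{eq:point-estimate:guarantee:noise} into their $r>0$ and $r=0$ regimes is inherited directly from \cref{thm: KLefficiency:noise}, so no separate treatment of the two cases is needed at this level.
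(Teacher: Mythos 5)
Your proof is correct, and it reaches the statement by a genuinely different decomposition than the paper's. The paper proves \cref{thm:efficiency:location} from scratch by contradiction: it fixes $\rad{E}{r}{\alpha}<\Delta''<\Delta'<\KLrad{r}{\alpha}$, builds the very same interval-valued estimator $S:\mb P'\mapsto \set{\theta'}{\abs{\theta'-E(\mb P')}\leq \Delta''}$, locates a witness distribution $\hat{\mb P}$ whose KL-TV set sticks out of the interval, and then re-runs the Sanov lower-bound argument (and, for $r=0$, the Varadarajan weak-law argument) to contradict \eqref{eq:point-estimate:guarantee:noise}---in effect inlining the proof of \cref{thm: KLefficiency:noise} for this special estimator. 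You instead invoke \cref{thm: KLefficiency:noise} as a black box: the interval estimator is regular (the paper itself notes that for set estimators of the form $\hat{\mb P}\mapsto\tset{\theta}{\tnorm{\theta-\hat\theta(\hat{\mb P})}\leq \Delta}$ regularity reduces to continuity of the centre map), its coverage guarantee is verbatim \eqref{eq:feasibility:parametric}, and uniform containment plus the enclosing-ball bound and $\Delta\downarrow\rad{E}{r}{\alpha}$ finish the argument. Your modularization buys three things: no duplication of the large-deviations machinery, automatic coverage of both regimes $r>0$ and $r=0$, and it quietly avoids a bookkeeping slip in the paper's write-up---the claim that $\Delta''<\Delta'\leq\abs{b-a}$ prevents the interval $S(\hat{\mb P})$, which has diameter $2\Delta''$, from containing both $a$ and $b$ actually requires $\abs{b-a}>2\Delta''$; the clean step is the one implicit in your route, comparing distances of points of $\SpKL{r}{\alpha}(\hat{\mb P})$ to the centre $E(\hat{\mb P})$. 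Your caution about the radius is also warranted: read literally, the Notation section constrains the Chebyshev centre to lie in the set ($\min_{s\in S}$), under which containment in an interval of radius $\Delta$ would not bound $\radius$ by $\Delta$ (the two-point set $\{-\Delta,\Delta\}$ sits inside $[-\Delta,\Delta]$ yet has constrained radius $2\Delta$); however, the paper's own usage---\cref{theorem:statistical-resolution-function} and the computation $\radius([-2,-0.83]\cup[0.83,2])=2$ in \cref{ex:normal-family-2}---confirms the smallest-enclosing-ball reading you adopt, so your passage from containment to $\radius(\SpKL{r}{\alpha}(\hat{\mb P}))\leq\Delta$ stands.
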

\begin{appendixproof}[Proof of \cref{thm:efficiency:location}]
  For the sake of contradiction assume that $\rad{E}{r}{\alpha}<\KLrad{r}{\alpha}$ for some regular estimator $E$ for some $r\geq 0$.
  For any $\Delta'<\rad{E}{r}{\alpha}$, there hence must exist a distribution $\hat {\mb P}$ so that $a,b\in \SpKL{r}{\alpha}(\hat{\mb P})$ with $|b-a|\geq \Delta'$. Furthermore, as $a,b\in \SpKL{r}{\alpha}(\hat{\mb P})$ there also exists distributions $\mb Q_{a}\in \mc P$ and $\mb Q_{b}\in \mc P$ so that $\KL(\hat {\mb P}, \mb Q_{a})\leq r$ and $\KL(\hat {\mb P}, \mb Q_{b})\leq r$ with $\TV(\mb Q_{a}, \mb P_{a})\leq \alpha$ and $\TV(\mb Q_{b}, \mb P_{b})\leq \alpha$.
  Finally, we consider $\Delta''$ so that we have $\rad{E}{r}{\alpha}<\Delta''<\Delta'<\KLrad{r}{\alpha}$.
  
  We address the cases $r>0$ and $r=0$ separately.\\
  \textbf{Case $r>0$.}  
  Consider the estimator
  \[
    S: \mb P'\mapsto \set{\theta'}{\abs{\theta'-E(\mb P')}\leq \Delta''}
  \]
  which because $\rad{E}{r}{\alpha}<\Delta''$ must verify the guarantee of \cref{eq:point-estimate:guarantee:noise:r}. However, we will show that
  \[
    \max \, \{\liminf_{n \to \infty} \frac{1}{n} \log \Prob_{\mb Q_a}(|E(\mb P_n)- \theta|> \Delta''), \liminf_{n \to \infty} \frac{1}{n} \log \Prob_{\mb Q_{b}}(|E(\mb P_n)+ \theta|> \Delta'')\} \geq -r
  \]
  establishing the contradiction as $\TV(\mb Q_{a}, \mb P_{a})\leq \alpha$ and $\TV(\mb Q_{b}, \mb P_{b})\leq \alpha$. As $\Delta''<\Delta'\leq \abs{b-a}$ it can be remarked that the set $S(\hat {\mb P})$ can not contain the set $\{a, b\}$. Assume here that it does not contain $a$, the other case is completely analagous and is hence omitted. We have here that
  \[
    \hat{\mb P} \in \mc D \defn \set{\mb P'}{|E(\mb P')- a|> \Delta''}.
  \]
  where $\mc D$ is an open set as we assumed that the estimator $E$ is regular. Hence, from Sanov's large deviation lower bound \citep{dembo2009large} it follows that
  \begin{align*}
    \liminf_{n \to \infty} \frac{1}{n} \log \Prob_{\mb Q_a}(|E(\mb P_n)- a|> \Delta'')>-\inf_{\mb P'\in \mc D} \KL(\mb P', \mb Q_{a})\geq - \KL(\hat{\mb P}, \mb Q_a) \geq -r
  \end{align*}
  establishing the claim as $\TV(\mb Q_{a}, \mb P_{a})\leq \alpha$.

  \textbf{Case $r=0$.}
  Observe that we have here $\mb Q_{a}=\mb Q_{b}= \hat{\mb P} $ as $\KL(\hat {\mb P}, \mb Q_{a})=\KL(\hat {\mb P}, \mb Q_{b})=0$. Hence, we have here that $\hat {\mb P}$ satisfies $\TV(\hat{\mb P}, \mb P_{a})\leq \alpha$ and $\TV(\hat{\mb P}, \mb P_{b})\leq \alpha$.
  Consider the estimator
  \[
    S: \mb P'\mapsto \set{\theta'}{\abs{\theta'-E(\mb P')}\leq \Delta''}
  \]
  which because $\rad{E}{0}{\alpha}<\Delta''$ must be feasible in Equation \eqref{eq:point-estimate:guarantee:noise:0}. However, we will show that
  \[
    \max \, \{\liminf_{n \to \infty} \Prob_{\hat{\mb P}}(|E(\mb P_n)- \theta|> \Delta''), \liminf_{n \to \infty} \Prob_{\hat{\mb P}}(|E(\mb P_n)+ \theta|> \Delta'')\} =1
  \]
  establishing the contradiction as $\TV(\hat{\mb P}, \mb P_{a})\leq \alpha$ and $\TV(\hat{\mb P}, \mb P_{b})\leq \alpha$. As $\Delta''<\Delta'$ it can be remarked that the set $S(\hat {\mb P})$ can not contain the set $\{a, b\}$. Assume here that it does not contain $a$, the other case is again completely analagous and is hence omitted. We have here that
  \[
    \hat{\mb P} \in \mc D \defn \set{\mb P'}{|E(\mb P')- a|> \Delta''}.
  \]
  where $\mc D$ is an open set as we assumed that the estimator $E$ is regular.
  Hence, $\tset{\mb P'}{\LP(\mb P, \hat{\mb P})\leq \epsilon}\subseteq \mc D$ for some $\epsilon>0$ sufficiently small.
  Hence, from the weak law of large numbers \citep{varadarajan1958convergence} it follows that
  \begin{align*}
    \liminf_{n \to \infty} \Prob_{\hat {\mb P}}(|E(\mb P_n)- a|> \Delta'')=\liminf_{n \to \infty} \Prob_{\hat {\mb P}}(\mb P_n\in \mc D)>\liminf_{n \to \infty} \Prob_{\hat {\mb P}}(\LP(\mb P_n, \hat{\mb P})\leq \epsilon)=1
  \end{align*}
  establishing the claim as $\TV(\hat {\mb P}, \mb P_{a})\leq \alpha$.
\end{appendixproof}
Now, given the minimality of the DRO confidence set (\cref{thm: KLefficiency:noise}), and its non-convexity, confidence intervals cannot reach minimality in the sense of \cref{thm: KLefficiency:noise}, which is a uniform result across all distributions $\hat{\Pb} \in \cP$. However, we can hope to design confidence intervals that verify a worst-case, rather than uniform, notion of minimality in the sense of reaching the lower bound of \cref{thm:efficiency:location}. We refer to such minimality notion as \textit{worst-case minimal}.

We will show that a worst-case minimal estimator exists and construct it explicitly. 
As we seek a worst-case minimality, an intuitive approach is to construct an estimator tailored to the maximizer $\hat{\Pb}^\star$ in \eqref{eq: KL radius}. 
To investigate such a worst-case distribution, we study the shape of the DRO confidence set for each radius $\Delta$ through the statistical resolution function $\r^{\alpha}(\Delta) := \min\{r \geq 0 \; : \; \exists \hat{\Pb} \in \cP, \;  \{-\Delta,\Delta\} \in \SpKL{\alpha}{r}(\hat{\Pb})\}$. This function characterizes the smallest radius such that the resulting DRO confidence set has radius at least $\Delta$. As a result, it is the smallest statistical resolution for which it is not possible to fence in the unknown parameter within a radius lower than $\Delta$.
Additionally, this statistical resolution function characterizes the worst-case radius given the following result.
\begin{theorem}
  \label{theorem:statistical-resolution-function}
  We have
  \(
    \KLrad{r}{\alpha} = \sup\set{\abs{\Delta}}{\r^{\alpha}(\Delta)\leq r}.
  \)
\end{theorem}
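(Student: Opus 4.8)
The plan is to establish the stated identity by proving the two opposing inequalities. The two ingredients that make everything work are (i) that on the one-dimensional parameter space $\Theta=\Re$ the Chebyshev radius of a set coincides with half its diameter, $\radius(S)=\tfrac12\operatorname{diam}(S)$ with $\operatorname{diam}(S)=\sup_{a,b\in S}\abs{a-b}$, and (ii) that the statistical resolution function is invariant under a simultaneous translation of the observed distribution and the parameter. Concretely, for $s\in\Re$ let $\tau_s\mb P$ denote the push-forward of $\mb P$ under $\xi\mapsto\xi+s$, so that $(\tau_s\mb P)(E)=\mb P(E-s)$ and, by the location structure, $\mb P_{\theta+s}=\tau_s\mb P_\theta$. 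Since $\KL$ and $\D=\TV$ are invariant under a common translation of both arguments, substituting $\mb Q\mapsto\tau_s\mb Q$ in the minimization \eqref{eq:resolution_function_set} gives $r^\alpha(\tau_s\hat{\mb P},\theta+s)=r^\alpha(\hat{\mb P},\theta)$ for all $s,\theta,\hat{\mb P}$; equivalently, $\theta\in\SpKL{r}{\alpha}(\hat{\mb P})$ iff $\theta+s\in\SpKL{r}{\alpha}(\tau_s\hat{\mb P})$. I also record that $\r^{\alpha}(\Delta)=\r^{\alpha}(-\Delta)$ is immediate from the definition, since $\{-\Delta,\Delta\}$ is symmetric in $\Delta\mapsto-\Delta$, so the right-hand side $\sup\set{\abs{\Delta}}{\r^{\alpha}(\Delta)\leq r}$ may be taken over $\Delta\geq0$.

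For the inequality $\KLrad{r}{\alpha}\geq\sup\set{\abs{\Delta}}{\r^{\alpha}(\Delta)\leq r}$, I fix $\Delta\geq0$ with $\r^{\alpha}(\Delta)\leq r$. Since the minimization defining $\r^{\alpha}(\Delta)$ is attained (by the lower-semicontinuity-plus-tightness argument used in the proof of \cref{thm:approximation:B}), there is a $\hat{\mb P}$ with $\max\{r^\alpha(\hat{\mb P},-\Delta),\,r^\alpha(\hat{\mb P},\Delta)\}=\r^{\alpha}(\Delta)\leq r$, so both $-\Delta$ and $\Delta$ belong to $\SpKL{r}{\alpha}(\hat{\mb P})$. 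As these two parameters are symmetric about the origin at distance $2\Delta$, the Chebyshev radius of $\SpKL{r}{\alpha}(\hat{\mb P})$ is at least $\Delta$, whence $\KLrad{r}{\alpha}\geq\Delta$. Taking the supremum over all admissible $\Delta$ yields the inequality.

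For the reverse inequality $\KLrad{r}{\alpha}\leq\sup\set{\abs{\Delta}}{\r^{\alpha}(\Delta)\leq r}$, I fix any $\hat{\mb P}$ and set $\rho=\radius(\SpKL{r}{\alpha}(\hat{\mb P}))=\tfrac12\operatorname{diam}(\SpKL{r}{\alpha}(\hat{\mb P}))$. For each $\epsilon>0$ choose $a<b$ in $\SpKL{r}{\alpha}(\hat{\mb P})$ with $b-a>2\rho-\epsilon$. Translating by $s=-(a+b)/2$ and invoking the translation invariance above, both $-\Delta$ and $\Delta$ with $\Delta=(b-a)/2$ lie in $\SpKL{r}{\alpha}(\tau_s\hat{\mb P})$; hence $\r^{\alpha}(\Delta)\leq r$ while $\Delta>\rho-\epsilon/2$. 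It follows that $\sup\set{\abs{\Delta}}{\r^{\alpha}(\Delta)\leq r}\geq\rho-\epsilon/2$, and letting $\epsilon\downarrow0$ and then taking the supremum over $\hat{\mb P}$ gives the claim. Combining the two directions proves the identity, with the degenerate case $\operatorname{diam}=0$ handled trivially since $\r^{\alpha}(0)=0\leq r$ (take $\hat{\mb P}=\mb P_0$, $\mb Q=\mb P_0$).

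The step I expect to require the most care is the attainment and closedness bookkeeping: justifying that the minimum defining $\r^{\alpha}(\Delta)$ is attained (or bypassing it through an approximation argument), that the diameter of $\SpKL{r}{\alpha}(\hat{\mb P})$ is realized — up to $\epsilon$ — by an honest feasible pair of parameters, and that the identification $\radius=\tfrac12\operatorname{diam}$ remains meaningful even though $\SpKL{r}{\alpha}(\hat{\mb P})$ need not be compact a priori (the unbounded case simply forces both sides to equal $+\infty$, which the $\epsilon$-approximation accommodates). I anticipate these points to be dispatched exactly as in the proof of \cref{thm:approximation:B}, via lower semicontinuity of $\KL$ and $\TV$ together with tightness to produce the relevant minimizers; the genuinely structural content — the reduction of the Chebyshev radius to half the diameter and the centering of any diametral pair at the origin through translation invariance — is then routine.
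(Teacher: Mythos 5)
Your proof follows essentially the same route as the paper's own: one direction by noting that $\r^{\alpha}(\Delta)\leq r$ places the symmetric pair $\{-\Delta,\Delta\}$ inside $\SpKL{r}{\alpha}(\hat{\mb P})$ for some $\hat{\mb P}$, forcing $\radius(\SpKL{r}{\alpha}(\hat{\mb P}))\geq\Delta$, and the converse by extracting a (near-)diametral pair $a,b$ and recentering it at the origin through the translation invariance of the location family so that $\r^{\alpha}((b-a)/2)\leq r$. Your additional bookkeeping---the explicit identification $\radius=\tfrac12\operatorname{diam}$ on $\Theta=\Re$, the $\epsilon$-approximation of the diameter, and the attainment of the minimum defining $\r^{\alpha}(\Delta)$ (which the paper supplies via \cref{lemma:huber-optimal-solution})---is sound and in fact tidies up the factor-of-two and translation-sign slips present in the paper's own write-up of this argument.
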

\begin{appendixproof}[Proof of \cref{theorem:statistical-resolution-function}]
  We first show that $\sup\set{\abs{\Delta}}{\r^{\alpha}(\Delta)\leq r} \leq \sup_{\hat {\mb P}\in \mc P} \radius(\SpKL{r}{\alpha}(\hat {\mb P}))$. Take indeed $\Delta'<\sup\set{\abs{\Delta}}{\r^{\alpha}(\Delta)\leq r}$. That is, we have by definition that $\r^{\alpha}(\Delta)\leq r$ for some $\Delta\geq \Delta'$. Hence,
  $\{-\Delta, \Delta\}\subseteq \SpKL{r}{\alpha}(\hat {\mb P})$  and consequently $\Delta'\leq \Delta \leq \radius(\SpKL{r}{\alpha}(\hat {\mb P}))$. As this applies for any $\Delta'<\sup\set{\abs{\Delta}}{\r^{\alpha}(\Delta)\leq r}$ we have that $\sup\set{\abs{\Delta}}{\r^{\alpha}(\Delta)\leq r} \leq \sup_{\hat {\mb P}\in \mc P} \radius(\SpKL{r}{\alpha}(\hat {\mb P}))$.

  We  show that $\sup_{\hat {\mb P}\in \mc P} \radius(S_{r, \alpha}(\hat {\mb P})) \leq \sup\set{\abs{\Delta}}{\r^{\alpha}(\Delta)\leq r}$. Let $\Delta<\sup_{\hat {\mb P}\in \mc P} \radius(\SpKL{r}{\alpha}(\hat {\mb P}))$. There exists $\hat {\mb P}$ so that $\radius(\SpKL{r}{\alpha}(\hat {\mb P}))\geq \Delta$ and 
  hence there exists $a\in \SpKL{r}{\alpha}(\hat {\mb P})$ and $b\in \SpKL{r}{\alpha}(\hat {\mb P})$
  so that $b-a\geq \Delta$. As $\mc P_\Theta$ is a translation invariant family we have that $\hat {\mb P}'$ defined as $\hat {\mb P}'(B) = \hat {\mb P}(B-(a+b)/2)$ for all $B\subseteq\Xi$ satisfies $\{-\Delta, \Delta\} \in \SpKL{r}{\alpha}(\hat {\mb P}') $. 
  Hence, $\r^{\alpha}(\Delta)\leq r$ and consequently $\Delta \leq \sup\set{\abs{\Delta}}{\r^{\alpha}(\Delta)\leq r}$. 
  As this applies for any $\Delta<\sup_{\hat {\mb P}\in \mc P} \radius(\SpKL{r}{\alpha}(\hat {\mb P}))$ we have that $\sup_{\hat {\mb P}\in \mc P} \radius(S_{r, \alpha}(\hat {\mb P})) \leq \sup\set{\abs{\Delta}}{\r^{\alpha}(\Delta)\leq r}$.
\end{appendixproof}

The statistical resolution function can be written as the following distributional optimization problem 
\begin{equation}
  \label{eq:statistical-resolution-function-optimization}
  \begin{array}{rl}
    \r^{\alpha}(\Delta)\defn \min & r\\
    \st & r\geq 0, ~ \hat{\mb P}\in \mc P, ~\mb Q^+_{\Delta} \in \mc P, ~\mb Q^-_{\Delta} \in \mc P, \\
                                 & \KL(\hat{\mb P}, \mb Q^+_{\Delta})\leq r, ~\KL(\hat{\mb P}, \mb Q^-_{\Delta})\leq r,\\
                                 & \TV(\mb Q^+_{\Delta}, \mb P_\Delta)\leq \alpha, ~\TV(\mb Q^-_{\Delta}, \mb P_{-\Delta})\leq \alpha.
  \end{array}
\end{equation}
The statistical resolution function is the smallest Kullback-Leibler distance between two corrupted distributions consistent with two parametric distribution $\mb P_{-\Delta}$ and $\mb P_{\Delta}$ with a common empirical distribution $\hat {\mb P}$; see also Figure \ref{fig:resolution}. The distributions $\Qb^{-}_{\Delta}$ and $\Qb^{+}_{\Delta}$ are the two corrupted distributions from $\mb P_{-\Delta}$ and $\mb P_{\Delta}$ both of which explain the empirical distribution $\hat{\Pb}$ well enough as judged by the Kullback-Leibler divergence.

\begin{figure}[t]
  \centering
  \includegraphics[height=6cm]{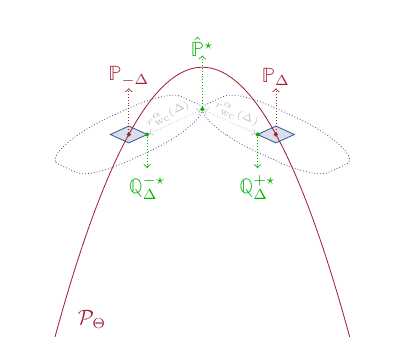}
  \caption{The statistical resolution function $\r^\alpha$ is characterized as the minimum KL distance between a common distribution and the sets $\mc P^-_\Delta\defn \tset{\mb Q_\Delta^-}{\TV(\mb Q_\Delta^-, \mb P_{-\Delta})\leq \alpha}$ and $\mc P^+_\Delta\defn \tset{\mb Q_\Delta^-}{\TV(\mb Q_\Delta^-, \mb P_{-\Delta})\leq \alpha}$ shown in blue. The densely dotted sets represent all distributions at KL distance at most $\r^\alpha(\Delta)$ from the sets $\mc P^-_\Delta$ and $\mc P^+_\Delta$.}
  \label{fig:resolution}
\end{figure}

For all $\Delta>0$ we will consider the minimizers $\hat{\Pb}^\star,\Qb_\Delta^{-\star},\Qb_\Delta^{+\star}$ of problem \eqref{eq:statistical-resolution-function-optimization}. We will show that such ``least favorable'' distributions do exists in Proposition \ref{lemma:huber-optimal-solution} and provide an explicit construction in Equations \eqref{eq:least-favorable-} and \eqref{eq:least-favorable+}. 
Notice that least favorable distributions have a dependence on the corruption parameter $\alpha$ which we do not make explicit. Given a pair of least favorable distributions $\Qb_\Delta^{-\star}$ and $\Qb_\Delta^{+\star}$ we consider an estimator based on a log-likelihood ratio hypothesis test between $\Qb_\Delta^{-\star}$ and $\Qb_\Delta^{+\star}$, i.e., we consider $\varphi_\Delta(\xi) \defn \frac{1}{2}\log(\tfrac{\d\Qb^\star_\Delta}{\d \Qb^\star_{-\Delta}})(\xi)$ for $\xi\in\Re$. The intuition behind this construction is based on the well established connection between hypothesis testing and confidence intervals \citep[Section 10.6]{huber1981robust}. Finally, let us denote with $E^\star_\Delta$ the estimator associated with $\varphi^\star_\Delta$.

\begin{theorem}
  \label{thm:huber-optimal}
  We have that $E^\star_\Delta$ coincides with the Huber estimator $E^{\huber}_{\Delta, k}$ in Example \ref{eg:huber-estimator} for $k = -\log(c')$ where $c'$ is defined implicitely in Equation \eqref{eq:def of c'} for any $\Delta\geq 0$. In particular, for any $\Delta>\KLrad{r}{\alpha}$ and $r>0$ we have that $E^\star_{\Delta}$ satisfies (\ref{eq:point-estimate:guarantee:noise:r}).
\end{theorem}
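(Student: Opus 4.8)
The plan is to prove the two assertions in turn. The identity $E^\star_\Delta = E^{\huber}_{\Delta,k}$ is essentially algebraic once the least favorable pair is in hand, whereas the coverage guarantee rests on a saddle-point property of the clipped likelihood-ratio test. Throughout I take $\theta^\star=0$ without loss of generality by translation invariance of $\cP_\Theta$, and I rely on Proposition \ref{lemma:huber-optimal-solution}, which supplies the minimizers $\hat{\Pb}^\star,\Qb^\star_\Delta,\Qb^\star_{-\Delta}$ of \eqref{eq:statistical-resolution-function-optimization} in the explicit form \eqref{eq:least-favorable-}--\eqref{eq:least-favorable+}.

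\emph{Stage 1 (identity with the Huber estimator).} From the explicit least favorable densities I read off the likelihood ratio $\d\Qb^\star_\Delta/\d\Qb^\star_{-\Delta}$. Because $g$ is log-concave and symmetric, the raw log-likelihood ratio $\log L(\xi)\defn\log\frac{g(\xi-\Delta)}{g(\xi+\Delta)}$ is nondecreasing in $\xi$, and the least favorable construction caps the ratio below at $c'$ and above at $c''=1/c'$, so that
\begin{equation*}
  \log\frac{\d\Qb^\star_\Delta}{\d\Qb^\star_{-\Delta}}(\xi) = \min\!\Big\{\max\{\log L(\xi),\,\log c'\},\,\log c''\Big\}.
\end{equation*}
Setting $k=-\log c'=\log c''$, this is exactly the clip of $\log L$ to $[-k,k]$, hence $\varphi_\Delta=\tfrac12\log\frac{\d\Qb^\star_\Delta}{\d\Qb^\star_{-\Delta}}=\tfrac12\,\phihuber_{\Delta,k}$. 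The factor $\tfrac12$ is immaterial: the defining relation $\E{\hat{\Pb}}{\varphi(\xi-E_\varphi(\hat{\Pb}))}=0$ of a $\varphi$-estimator is invariant under multiplying $\varphi$ by a positive constant, so $E^\star_\Delta$ and $E^{\huber}_{\Delta,k}$ solve the same equation and therefore coincide for every $\hat{\Pb}$ and every $\Delta\geq 0$.

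\emph{Stage 2 (coverage).} Fix $\Delta>\KLrad{r}{\alpha}$ and $r>0$; by \cref{theorem:statistical-resolution-function} this gives $\r^{\alpha}(\Delta)>r$. I bound $\Prob_{\Pcor}(E^\star_\Delta(\Pb_n)>\Delta)$ for arbitrary $\Pcor$ with $\TV(\Pcor,\Pb_0)\leq\alpha$; the event $\{E^\star_\Delta(\Pb_n)<-\Delta\}$ is symmetric and a union bound combines the two. Since $\varphi_\Delta$ is nondecreasing, $E\mapsto\E{\Pb_n}{\varphi_\Delta(\xi-E)}$ is nonincreasing and vanishes at the estimate, so $E^\star_\Delta(\Pb_n)>\Delta$ forces $\E{\Pb_n}{\varphi_\Delta(\xi-\Delta)}\geq 0$. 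Because total variation is translation invariant and $\Pb_0$ shifted by $-\Delta$ equals $\Pb_{-\Delta}$, the law $\Qb^-$ of $\xi-\Delta$ under $\Pcor$ satisfies $\TV(\Qb^-,\Pb_{-\Delta})\leq\alpha$, and a Chernoff bound at unit tilt gives
\begin{equation*}
  \tfrac1n\log\Prob_{\Pcor}\!\big(\tfrac1n\textstyle\sum_i\varphi_\Delta(\xi_i-\Delta)\geq 0\big)\ \leq\ \log\E{\Qb^-}{e^{\varphi_\Delta}}\ =\ \log\E{\Qb^-}{\sqrt{\tfrac{\d\Qb^\star_\Delta}{\d\Qb^\star_{-\Delta}}}}.
\end{equation*}
The crux is that $\Qb^\star_{-\Delta}$ maximizes $\E{\Qb^-}{e^{\varphi_\Delta}}$ over the ball $\{\TV(\cdot,\Pb_{-\Delta})\leq\alpha\}$: here the clip is decisive, for $e^{\varphi_\Delta}=\sqrt{\d\Qb^\star_\Delta/\d\Qb^\star_{-\Delta}}$ is a bounded integrand attaining its maximum exactly on $\{\log L\geq k\}$, and $\Qb^\star_{-\Delta}$ spends its entire corruption budget there, which is the optimal way to raise an expectation over a total-variation ball. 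Using that a short computation (or Proposition \ref{lemma:huber-optimal-solution}) identifies $\hat{\Pb}^\star\propto(\d\Qb^\star_\Delta\,\d\Qb^\star_{-\Delta})^{1/2}$ and $\r^{\alpha}(\Delta)=\KL(\hat{\Pb}^\star,\Qb^\star_{-\Delta})=-\log\int(\d\Qb^\star_\Delta\,\d\Qb^\star_{-\Delta})^{1/2}$, the maximum evaluates to
\begin{equation*}
  \E{\Qb^-}{e^{\varphi_\Delta}}\ \leq\ \E{\Qb^\star_{-\Delta}}{e^{\varphi_\Delta}}\ =\ \int\sqrt{\d\Qb^\star_\Delta\,\d\Qb^\star_{-\Delta}}\ =\ e^{-\r^{\alpha}(\Delta)}.
\end{equation*}
Hence $\limsup_{n}\tfrac1n\log\Prob_{\Pcor}(E^\star_\Delta(\Pb_n)>\Delta)\leq-\r^{\alpha}(\Delta)<-r$ for every admissible $\Pcor$, which is exactly \eqref{eq:point-estimate:guarantee:noise:r}.

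I expect the maximization step in Stage 2 to be the main obstacle: verifying that the least favorable $\Qb^\star_{-\Delta}$ is simultaneously the worst case for the tilted moment $\E{\cdot}{e^{\varphi_\Delta}}$ over the whole corruption ball, not merely at the nominal $\Pb_{-\Delta}$. This is the robustness heart of Huber's test, and it hinges entirely on $e^{\varphi_\Delta}$ being bounded thanks to the clip at level $k$ and on the corruption of $\Qb^\star_{-\Delta}$ being placed on the level set where that bound is attained. By comparison, the algebra of Stage 1 and the Chernoff/translation-invariance reductions of Stage 2 are routine.
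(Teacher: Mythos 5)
Your proof is correct and its skeleton coincides with the paper's: Stage 1 is precisely how the paper obtains the identity (Proposition \ref{lemma:huber-optimal-solution} delivers the least favorable pair, whose clipped likelihood ratio gives $\varphi_\Delta = \tfrac12\phihuber_{\Delta,k}$ with $k=-\log c'$, the factor $\tfrac12$ being harmless for a $\varphi$-estimator), and Stage 2 follows the same reduction: monotonicity of $\varphi_\Delta$ (from log-concavity of $g$) turns $\{E^\star_\Delta(\mb P_n)>\theta+\Delta\}$ into a one-sided empirical-mean deviation, translation invariance moves the corruption ball to one centred at $\mb P_{-\Delta}$, an exponential bound yields $\log\E{\mb Q^-_{\Delta}}{e^{\varphi_\Delta}}$, and the worst case over the ball evaluates to $e^{-\r^{\alpha}(\Delta)}$, with $\r^{\alpha}(\Delta)>r$ from $\Delta>\KLrad{r}{\alpha}$ via \cref{theorem:statistical-resolution-function}. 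The one genuine divergence is the step you flag yourself: the paper proves the maximization $\E{\mb Q^-_{\Delta}}{e^{\varphi_\Delta}}\leq \E{\mb Q^{-\star}_{\Delta}}{e^{\varphi_\Delta}}$ via a first-order stochastic dominance lemma (Lemma \ref{lemma:stochastic-dominance}), obtained by applying the characterization $\mb Q(E)\leq \mb P(E)+\alpha$ to the level sets of $c^\star_\Delta$ and valid for \emph{every} nondecreasing transform of $c^\star_\Delta$, whereas you argue it directly for the single integrand $e^{\varphi_\Delta}$. Your version is sound and can be closed in one line: writing $\mu=\mb Q^-_{\Delta}-\mb P_{-\Delta}$ with $\mu^+(\Xi)=\mu^-(\Xi)\leq\alpha$, one has $\int e^{\varphi_\Delta}\,\d\mu\leq \alpha\left(e^{k/2}-e^{-k/2}\right)$, with equality at $\mb Q^{-\star}_{\Delta}$ because it removes exactly mass $\alpha$ from $\{c_\Delta(\xi)\leq c'\}$, where $e^{\varphi_\Delta}$ is minimal, and adds exactly $\alpha$ on $\{c_\Delta(\xi)\geq 1/c'\}$, where it is maximal --- the mass accounting being the content of Lemma \ref{lemma:least-favorable-pair}, which your argument implicitly leans on and should cite. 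What each route buys: yours is more elementary and exploits exactly the boundedness created by the clip, but it is tied to this one integrand; the paper's dominance lemma is reused elsewhere (at $t=1$ in Proposition \ref{prop:lower-bound-gross-error-margin-noise} for the median, and inside Proposition \ref{lemma:huber-optimal-solution}), so the extra generality is not wasted. Finally, your unit-tilt Markov bound is exactly what the paper's Cram\'er computation $\inf_{s\geq 0}\Lambda^\star(s,\mb Q^-_\Delta)\geq -\Lambda(1,\mb Q^-_\Delta)$ reduces to, so nothing is lost there and your bound is even non-asymptotic.
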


\begin{appendixproof}[Proof of \cref{thm:huber-optimal}]
  The first part of the theorem is a direct consequence of Proposition \ref{lemma:huber-optimal-solution} which identifies the minimizers $\Qb_\Delta^{-\star}$ and $\Qb_\Delta^{+\star}$ to be of the form given in Equations \eqref{eq:least-favorable-} and \eqref{eq:least-favorable+}, respectively.

  We now show that for any $\Delta>\KLrad{r}{\alpha}$ and $r>0$ we have that $E^\star_{\Delta}$ satisfies (\ref{eq:point-estimate:guarantee:noise:r}).
Let $r\geq 0$ and $\alpha \in [0,1]$.
Remark first that we have
  \[
    \Prob_{\mb P^c}(|E^\star_{\Delta}(\mb P_n)- \theta|> \Delta) = \Prob_{\mb P^c}(E^\star_{\Delta}(\mb P_n)> \theta+ \Delta) + \Prob_{\mb P^c}(\theta -\Delta > E^\star_{\Delta}(\mb P_n))
  \]
  for any distribution $\mb P^c \in \cP$ and $\theta \in \Theta$.
  It can be remarked that for $E^\star_{\Delta}(\mb P_n)> \theta+ \Delta$ it is necessary that $\int \varphi^\star_{\Delta}(\xi-\theta-\Delta)\d\mb P_{n}(\xi) = \sum_{i=1}^n  \varphi^\star_{\Delta}(\xi_i-\theta-\Delta)/n\geq 0$ as $\xi \to \varphi_{\Delta}^\star(\xi)$ is a nondecreasing function as $g$ is logconcave \citep[p. 283]{huber1981robust}.

  Using the translation property of our family we have
  \begin{align*}
    & \forall \theta \in \Theta,~\forall \mb {P}^c \in \mathcal{P},~\TV(\mb {P}^c, \mb P_\theta)\leq \alpha :~\frac 1n \log \Prob_{\mb P^c}(\textstyle\sum_{i=1}^n  \varphi^\star_{\Delta}(\xi_i-\theta-\Delta) /n \geq 0) \leq -r \\
    & \hspace{5em}\iff  \forall \mb Q^-_{\Delta} \in \cP,~\TV(\Qb^-_{\Delta}, \mb P_{-\Delta})\leq \alpha: ~\frac 1n \log \Prob_{\mb Q^-_{\Delta}}(\textstyle\sum_{i=1}^n  \varphi^\star_{\Delta}(\xi_i) /n \geq 0) \leq  -r.
  \end{align*}

  Let the log-moment generating function of the random variable $\varphi_{\Delta}^\star(\xi)$ under a distribution $\mb Q^-_\Delta$ be defined as $\Lambda(t, \mb Q^-_\Delta) \defn \log( \E{\mb Q^-_{\Delta}}{\exp(t \varphi_{\Delta}^\star(\xi))})$ and define the Cram\'er function as its convex conjugate $\Lambda^{\star}(s, \mb Q^-_\Delta) = \sup_{t\in\Re} s t - \Lambda(t, \mb Q^-_\Delta)$. 
  Cram\'er's theorem establishes the upper bound
  \begin{align*}
    \frac 1n \log \Prob_{\mb Q^-_{\Delta}}(\textstyle\sum_{i=1}^n  \varphi^\star_{\Delta}(\xi_i) /n \geq 0) &\leq - \inf_{s\geq 0}\Lambda^{\star}(s,\mb Q^-_\Delta ).
  \end{align*}
  We will now show that $\inf_{s\geq 0}\Lambda^{\star}(s, \mb Q^-_\Delta) \geq \r^{\alpha}(\Delta)$ for any $\mb Q^-_{\Delta} \in \{ \Qb \in \cP \; : \; \TV(\Qb,\Pb_{-\Delta}) \leq \alpha\}$. Indeed, we have
  \begin{align*}
    \inf_{s\geq 0} \Lambda^{\star}(s, \mb Q^-_{\Delta}) = &  \inf_{s\geq 0} \sup_{t\in\Re} s t - \Lambda(t, \mb Q^-_{\Delta}) \geq \inf_{s\geq 0} s - \Lambda(1, \mb Q^-_{\Delta}) \\
    = & - \log\left( \E{\mathbb{Q}^-_{\Delta}}{e^{\varphi^\star_{\Delta}(\xi)}}\right) \\
    = & -\log \int \sqrt{\frac{\d \mb Q_{\Delta}^{+\star}}{\d \mb Q_{\Delta}^{-\star}}(\xi)} \d \mb Q^-_{\Delta}(\xi)  \\
    \geq & -\log \int \sqrt{\frac{\d \mb Q_{\Delta}^{+\star}}{\d \mb Q_{\Delta}^{-\star}}(\xi)} \d \mb Q^{-\star}_{\Delta}(\xi) = -\log\int\sqrt{q^{-\star}_{\Delta}(\xi)q^{+\star}_{\Delta}(\xi)}\, \d\xi = \r^{\alpha}(\Delta)
  \end{align*}
  where the second inequality follows from Lemma \ref{lemma:stochastic-dominance} as $\xi\mapsto \varphi_{\Delta}^\star (\xi)$ is a nondecreasing function and the final equality is shown in \cref{lemma:huber-optimal-solution}.
  We have hence shown that
  \[
    \forall \theta \in \Theta,~\forall \mb {P}^c \in \mathcal{P},~\TV(\mb P^c, \mb P_\theta)\leq \alpha : ~\Prob_{\mb P^c}(E^\star_{\Delta}(\mb P_n)> \theta+ \Delta)\leq \exp(-\r^{\alpha}(\Delta)n).
  \]
  Using a completely symmetrical argument we can similarly establish that also
  \begin{align*}
    \forall \theta \in \Theta,~\forall \mb {P}^c \in \mathcal{P},~\TV(\mb P^c, \mb P_\theta)\leq \alpha : ~\Prob_{\mb P^c}(\theta -\Delta > E^\star_{\Delta}(\mb P_n))\leq \exp(-\r^{\alpha}(\Delta)n).
  \end{align*}
  Putting these inequalities together, and noticing that $r<\r^{\alpha}(\Delta)$, as $\Delta > \KLrad{r}{\alpha}$, it follows that $E^\star_{\KLrad{r}{\alpha}}$ verifies the coverage guarantee \eqref{eq:point-estimate:guarantee:noise}.
\end{appendixproof}

The previous theorem answers a key question of our paper by establishing a natural connection between DRO and robust statistics. In the limit $\Delta\downarrow \KLrad{r}{\alpha}$ the Huber estimator $E^\star_\Delta$ is worst-case minimal achieving the lower bound of \cref{thm:efficiency:location} and hence shares the same minimal radius in the worst-case as the DRO confidence set. However, the Huber estimator lacks uniform minimality, which is in a sense the price to pay for tractability. The following example illustrates this discussion in the case of normal distributions.

\begin{example}[Normal Family Continued]
  \label{ex:normal-family-2}
  Consider the standard normal location family $\mc P_\Theta = \set{N(\theta,1)}{\theta\in \Re}$ parametrized in the unknown mean $\theta\in\Re$ from Example \ref{ex:normal-family}. Let us consider $\alpha=0.1$ and $r=0.43$. We numerically determine in this case $\KLrad{r}{\alpha} = 2$. That is, no errors smaller than $\Delta> \KLrad{r}{\alpha}$ can be resolved in the worst-case with statistical resolution $r=0.43$ in the face of total variation corruption at level $\alpha=0.1$. Figure \ref{fig:ex:least-favorable} depicts the least favorable distributions $\mb Q_{\Delta}^{+\star}$ and $\mb Q_{\Delta}^{-\star}$ as well as the symmetric density $\hat {\mb P}^\star$ which we argued is a maximizer in problem (\ref{eq: KL radius}). That is, we have $\KLrad{r}{\alpha}=\radius(S_{r, \alpha}(\hat {\mb P}^\star))$.

  In Figure \ref{fig:ex:resolution} we depict the statistical resolution function $\theta \mapsto \r^\alpha(\hat {\mb P}^\star, \theta)$ defined in Equation \eqref{eq:resolution_function_set}. Recall that the statistical resolution function characterizes the confidence set as the sublevel set $S_{r, \alpha}(\hat {\mb P}^\star) = \tset{\theta\in \Theta}{\r^\alpha(\hat {\mb P}^\star, \theta)\leq r}$. Hence, in this particular example we numerically determine the minimal confidence set to be the nonconvex set $S_{r, \alpha}(\hat {\mb P}^\star) = [-2, -0.83] \cup [0.83, 2]$.

  By symmetry, we have here that $E^\star_\Delta(\hat {\mb P}^\star) = 0$ for any $\Delta\geq 0$. The Huber confidence set $S^\huber_{\Delta}(\hat {\mb P}^\star) := [E^\star_\Delta(\hat {\mb P}^\star) - \Delta, E^\star_\Delta(\hat {\mb P}^\star) + \Delta]$ indeed achieves the DRO radius $\KLrad{r}{\alpha} = \radius(S_{r, \alpha}(\hat {\mb P}^\star))=2$ when $\Delta \downarrow \KLrad{r}{\alpha}$, by virtue of \cref{thm:huber-optimal}, as visualized in red in Figure \ref{fig:ex:resolution}. On the other hand, in view of \cref{thm: KLefficiency:noise} it should be remarked that even at the worst-case distribution $\hat{\mb P}^\star$, we still have that $[-2, -0.83] \cup [0.83, 2] = S_{r, \alpha}(\hat {\mb P}^\star)\subset [-2, 2] \subseteq S^\huber_{\KLrad{r}{\alpha}}(\hat {\mb P}^\star)$ indicating that although perhaps not as practical the DRO confidence set estimator is smaller than the Huber confidence set estimator.

\end{example}

\begin{figure*}[t!]
    \centering
    \begin{subfigure}[t]{0.5\textwidth}
        \centering
        \includegraphics[height=2.5cm]{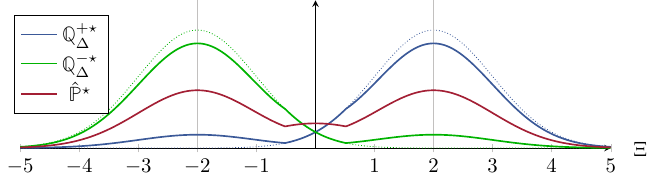}
        \caption{Least favorable distributions}
        \label{fig:ex:least-favorable}
    \end{subfigure}%
    ~ 
    \begin{subfigure}[t]{0.5\textwidth}
        \centering
        \includegraphics[height=5.5cm]{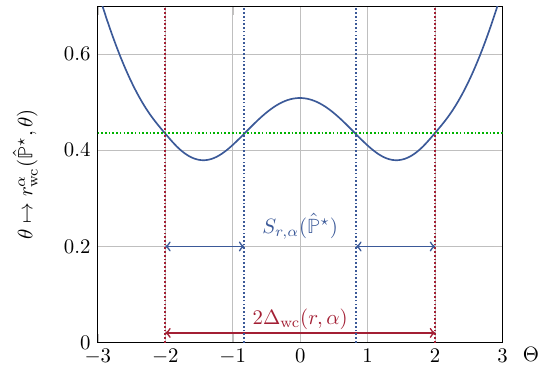}
        \caption{Statistical resolution function}
        \label{fig:ex:resolution}
    \end{subfigure}
    \caption{}
\end{figure*}

We now make explicit the construction of the least-favorable distributions which proves the form of the minimal estimator.

\paragraph{Constructing explicitly the least favorable distributions.}
Let $c_\Delta(\xi) \defn \tfrac{g(\xi-\Delta)}{g(\xi+\Delta)}$ which is a strictly increasing function as $g$ is logconcave \citep[p. 283]{huber1981robust}. 
Consider distributions $\mb Q^{+\star}_{\Delta}$ and $\mb Q_{\Delta}^{-\star}$ defined by their densities
\begin{align}
  q^{-\star}_{\Delta}(\xi) =&
                            \begin{cases}
                              \frac{1}{1+c'} \left[ g(\xi+\Delta) + g(\xi-\Delta) \right] & {\rm{if}}~  \tfrac{g(\xi-\Delta)}{g(\xi+\Delta)}\leq c',\\
                              g(\xi+\Delta)  & {\rm{if}}~c' < \tfrac{g(\xi-\Delta)}{g(\xi+\Delta)} < 1/c' ,\\
                              \frac{c'}{1+c'} \left[ g(\xi+\Delta) + g(\xi-\Delta) \right] & {\rm{if}}~ 1/c'\leq \tfrac{g(\xi-\Delta)}{g(\xi+\Delta)}
                            \end{cases}\label{eq:least-favorable-}\\
  \intertext{and}
  q^{+\star}_{\Delta}(\xi) =&
                           \begin{cases}
                             \frac{c'}{1+c'} \left[ g(\xi+\Delta) + g(\xi-\Delta) \right] & {\rm{if}}~  \tfrac{g(\xi-\Delta)}{g(\xi+\Delta)}\leq c',\\
                             g(\xi-\Delta)  & {\rm{if}}~c' <\tfrac{g(\xi-\Delta)}{g(\xi+\Delta)} < 1/c' ,\\
                             \frac{1}{1+c'} \left[ g(\xi+\Delta) + g(\xi-\Delta) \right] & {\rm{if}}~ 1/c'\leq \tfrac{g(\xi-\Delta)}{g(\xi+\Delta)}
                           \end{cases}\label{eq:least-favorable+}
\end{align}
for the unique constant $c'\leq 1$ chosen to guarantee that $\TV(\mb Q_\Delta^{-\star}, \mb P_{-\Delta})=\alpha$ and $\TV(\mb Q_\Delta^{+\star}, \mb P_{\Delta})=\alpha$ by enforcing the condition
\begin{equation}\label{eq:def of c'}
    f_\Delta(c') \defn \int \left(c' g(\xi+\Delta) - g(\xi-\Delta)\right)^+ \d\xi/(1+c') = \alpha.
\end{equation}

\begin{remark}
Observe that from the symmetry condition on $\mb P_0$ we have that its density function satisfies $g(\xi) = g(-\xi)$ for all $\xi\in \Xi$ from which it follows immediately that
\(
f_\Delta(1) = \int_{\xi< 0} g(\xi+\Delta) \d\xi \geq \frac{1}{2} \geq \alpha.
\)
\citet[p.\ 269]{huber1981robust} shows that the function $f_\Delta(c')$ is continuous and strictly increasing on $c'\geq \inf_{\xi\in \Xi} c_{\Delta}(\xi) = \inf_{\xi\in \Xi} c_{\Delta}(\xi)$ while clearly we have $f_\Delta(c')=0$ for $c'<\inf_{\xi\in \Xi} c_{\Delta}(\xi)$ and as shown before $f(1)\geq \alpha$. Hence, we have that $c'\leq 1$ is uniquely defined for any $\Delta\geq 0$.
\end{remark}

\begin{toappendix}
  \begin{lemma}
    \label{lemma:least-favorable-pair}
    We have that $\mb Q^{-\star}_{\Delta},\mb Q_{\Delta}^{+\star} \in \cP$, $\TV(\mb Q^{-\star}_{\Delta}, \Pb_{-\Delta}) \leq \alpha$ and $\TV(\mb Q^{+\star}_{\Delta}, \Pb_{\Delta}) \leq \alpha$.
  \end{lemma}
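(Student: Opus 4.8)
The plan is to verify three claims about the explicitly constructed densities $q^{-\star}_\Delta$ and $q^{+\star}_\Delta$: that they are genuine probability densities (nonnegative and integrating to one), and that each lies within total variation $\alpha$ of the correct location-shifted reference $\mb P_{-\Delta}$ and $\mb P_{+\Delta}$ respectively. Nonnegativity is immediate from the piecewise definition since $g>0$ and $c'>0$. For the normalization, I would first observe that in the middle regime the densities agree with the shifted references ($q^{-\star}_\Delta = g(\cdot+\Delta)$ and $q^{+\star}_\Delta = g(\cdot-\Delta)$), while in the two outer regimes the mass is redistributed between $g(\cdot+\Delta)$ and $g(\cdot-\Delta)$ via the convex-combination weights $\tfrac{1}{1+c'}$ and $\tfrac{c'}{1+c'}$. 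The key bookkeeping identity is that these two weights sum to one, so that on each outer region the total mass contributed by the two densities $q^{-\star}_\Delta$ and $q^{+\star}_\Delta$ together equals $g(\xi+\Delta)+g(\xi-\Delta)$; exploiting the symmetry $g(\xi)=g(-\xi)$, which swaps the roles of the two outer regimes under $\xi \mapsto -\xi$, I would show each density integrates to exactly one.

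The total variation bounds are the heart of the lemma. Since $\TV$ between two densities equals half the $L^1$ distance, equivalently $\TV(\mb Q, \mb P) = \int (p - q)^+ \, \d\xi$, I would compute $\TV(\mb Q^{-\star}_\Delta, \mb P_{-\Delta}) = \int \bigl(g(\xi+\Delta) - q^{-\star}_\Delta(\xi)\bigr)^+ \d\xi$. In the middle regime the integrand vanishes identically. In the regime where $\tfrac{g(\xi-\Delta)}{g(\xi+\Delta)}\leq c'$, a direct computation gives $g(\xi+\Delta) - q^{-\star}_\Delta(\xi) = \tfrac{1}{1+c'}\bigl(c'g(\xi+\Delta) - g(\xi-\Delta)\bigr)$, which is nonnegative precisely on this regime, while in the remaining outer regime the difference is nonpositive and the positive part vanishes. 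Collecting the surviving contribution, the total variation reduces exactly to $\int \bigl(c'g(\xi+\Delta)-g(\xi-\Delta)\bigr)^+ \d\xi/(1+c') = f_\Delta(c')$, which equals $\alpha$ by the defining condition \eqref{eq:def of c'}. The symmetric computation for $\mb Q^{+\star}_\Delta$ against $\mb P_{\Delta}$ proceeds identically after the substitution $\xi \mapsto -\xi$ and use of $g(\xi)=g(-\xi)$, again yielding $f_\Delta(c')=\alpha$.

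The main obstacle I anticipate is not any single hard estimate but rather careful tracking of which regime contributes to the positive part in each $L^1$ computation, and confirming that the algebraic simplification of $g(\xi+\Delta)-q^{-\star}_\Delta(\xi)$ in the outer regimes has the correct sign so that exactly one of the two outer regions survives in the positive part. The other subtlety is making the symmetry argument precise: the map $\xi \mapsto -\xi$ sends the region $\{c_\Delta(\xi)\leq c'\}$ to $\{c_\Delta(\xi)\geq 1/c'\}$ because $c_\Delta(-\xi) = g(-\xi-\Delta)/g(-\xi+\Delta) = g(\xi+\Delta)/g(\xi-\Delta) = 1/c_\Delta(\xi)$, and it interchanges the definitions of $q^{-\star}_\Delta$ and $q^{+\star}_\Delta$; this is what lets me transfer the $\mb Q^{-\star}_\Delta$ computation to the $\mb Q^{+\star}_\Delta$ one without redoing it. Once these sign and symmetry checks are in place, the normalization and both total variation identities follow from the single defining equation $f_\Delta(c')=\alpha$, and the lemma is established with the stated inequalities holding in fact with equality.
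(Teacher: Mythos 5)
Your proposal is correct and follows essentially the same route as the paper's proof: a direct verification of the explicit densities in which all three claims reduce, via the regime-by-regime sign analysis and the reflection $\xi \mapsto -\xi$ with $g(\xi)=g(-\xi)$, to the single defining equation $f_\Delta(c')=\alpha$. Your only (harmless) streamlinings are computing the total variation as $\int \bigl(g(\xi+\Delta)-q^{-\star}_\Delta(\xi)\bigr)^+\,\d\xi$ so that only the lower outer regime contributes, where the paper evaluates half the $L^1$ distance using both the deficit and surplus identities, and obtaining normalization from $q^{-\star}_\Delta + q^{+\star}_\Delta = g(\cdot+\Delta)+g(\cdot-\Delta)$ together with $q^{-\star}_\Delta(-\xi)=q^{+\star}_\Delta(\xi)$ rather than by the paper's direct computation $\int \d\mb Q^{-\star}_\Delta = 1-\alpha+\alpha$.
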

  \begin{proof}
  Recall $c_\Delta(\xi) = \tfrac{g(\xi-\Delta)}{g(\xi+\Delta)}$.
    Observe that we have
    \begin{align*}
      q^{-\star}_{\Delta}(\xi) = & \begin{cases}
        \frac{1+ c_\Delta(\xi)}{1+c'} g(\xi+\Delta) & {\rm{if}}~  c_\Delta(\xi)\leq c',\\
        g(\xi+\Delta)  & {\rm{if}}~c' < c_\Delta(\xi) < 1/c' ,\\
        c' \frac{1+c_\Delta(\xi)}{1+c'} g(\xi+\Delta) & {\rm{if}}~ 1/c'\leq c_\Delta(\xi)
      \end{cases}
    \end{align*}
    and hence we have that $q^{-\star}_{\Delta}(\xi) \leq g(\xi+\Delta)$ when $c_\Delta(\xi)\leq c'$, $q^{-\star}_{\Delta}(\xi) = g(\xi+\Delta)$ when $c'< c_\Delta(\xi)< 1/c'$ and finally $q^{-\star}_{\Delta}(\xi) \geq g(\xi+\Delta)$ when $1/c'\leq c_\Delta(\xi)$. Hence, we have that
    \begin{align*}
      & \int \d \mb Q^{-\star}_{\Delta}(\xi)\\
      = & \int q^{-\star}_{\Delta}(\xi) \one{c_\Delta(\xi)\leq c'} \d\xi + \int q^{-\star}_{\Delta}(\xi) \one{c' < c_\Delta(\xi)\leq 1/c'} \d\xi + \int q^{-\star}_{\Delta}(\xi) \one{1/c'\leq c_\Delta(\xi)} \d\xi \\
      =  & \int g(\xi+\Delta) \d\xi + \int  [q^{-\star}_{\Delta}(\xi) - g(\xi+\Delta) ]\one{c_\Delta(\xi)\leq c'} \d\xi \\
      & \quad + \int [q^{-\star}_{\Delta}(\xi) - g(\xi+\Delta)] \one{1/c'\leq c_\Delta(\xi)} \d\xi\\
      = & 1 -\alpha+\alpha=1
    \end{align*}
    where penultimate inequality follows from 
    \begin{align*}
      \alpha = f_\Delta(c') \defn& \int \max(c' g(\xi+\Delta) - g(\xi-\Delta), 0) \d\xi/(1+c') \\
      =& \int [c' g(\xi+\Delta) - g(\xi-\Delta)]\one{c_\Delta(\xi) \leq c'}\d\xi/(1+c') \\
      =& \int [(c'+1) g(\xi+\Delta) - g(\xi+\Delta) - g(\xi-\Delta)] \one{c_\Delta(\xi)\leq c'}\d\xi/(1+c') \\
      =& \int [g(\xi+\Delta) - \frac{1}{1+c'}(g(\xi+\Delta) + g(\xi-\Delta))]\one{c_\Delta(\xi)\leq c'}\d\xi\\
      = & \int [g(\xi+\Delta) - q_{\Delta}^{-\star}(\xi)]\one{c_\Delta(\xi)\leq c'}\d\xi.
    \end{align*}
    and
    \begin{align*}
      \alpha = f_\Delta(c') \defn& \int \max(c' g(\xi+\Delta) - g(\xi-\Delta), 0) \d\xi/(1+c') \\
      = & \int \max(c' g(\xi'-\Delta) - g(\xi'+\Delta), 0) \d\xi'/(1+c') \\
      =& \int [c' g(\xi'-\Delta) - g(\xi'+\Delta) ]\one{1/c' \leq c_\Delta(\xi')}\d\xi'/(1+c') \\
      =& \int [\frac{c'}{1+c'} g(\xi'-\Delta)+\frac{c'}{1+c'} g(\xi'+\Delta) - g(\xi'+\Delta) ]\one{1/c' \leq c_\Delta(\xi')}\d\xi'\\
      = & \int [q_{\Delta}^{-\star}(\xi') - g(\xi'+\Delta)] \one{1/c' \leq c_\Delta(\xi')}\d\xi' = \TV(\mb Q^-_\Delta, \mb P_{-\Delta})
    \end{align*}

    Using these two previous results, we have
    \begin{align*}
        & \TV(\mb Q_\Delta^{-\star}, \mb P_{-\Delta}) \\
        &= \frac{1}{2}\int \left|q_{\Delta}^{-\star} -  g(\xi+\Delta)\right| d\xi \\
        &=
        \frac{1}{2}\int [g(\xi+\Delta) - q_{\Delta}^{-\star}(\xi)]\one{c_\Delta(\xi)\leq c'}\d\xi
        +
        \frac{1}{2}\int [q_{\Delta}^{-\star}(\xi') - g(\xi'+\Delta)] \one{1/c' \leq c_\Delta(\xi')}\d\xi'\\
        &= \alpha
    \end{align*}
    The fact that $\int \d \mb Q^{+\star}_{\Delta}(\xi)=1$ and $\TV(\mb Q_\Delta^{+\star}, \mb P_{\Delta})=\alpha$  can be proven completely analogously.
  \end{proof}

  We will show with the help of a first-order stochastic dominance result by {\citet[p.\ 271]{huber1981robust}} that the pair special least favorable continuous distributions $\mb Q^{-\star}_{\Delta}$ and $\mb Q_{\Delta}^{+\star}$ is optimal in problem (\ref{eq:statistical-resolution-function-optimization}).
  Consider the random variable
\begin{align}
  \label{eq:huber-varphi}
  c^\star_\Delta(\xi) \defn \tfrac{\d \mb Q^\star_\Delta}{\d \mb Q^\star_{-\Delta}}(\xi) = 
  \begin{cases}
    c' & {\rm{if}}~  c_\Delta(\xi)\leq c',\\
    c_\Delta(\xi) & {\rm{if}}~c' < c_\Delta(\xi) < 1/c' ,\\
    1/c' & {\rm{if}}~ c_\Delta(\xi) \geq 1/c'.
  \end{cases}
\end{align}

\begin{lemma} 
  \label{lemma:stochastic-dominance}
  Let $\mc P^-_{\Delta} \defn \tset{\mb Q^-_{\Delta}\in \mc P}{\TV(\mb Q^-_{\Delta}, \mb P_{-\Delta})\leq \alpha}$ and $\mc P^+_{\Delta} \defn \tset{\mb Q^+_{\Delta}\in \mc P}{\TV(\mb Q^+_\Delta, \mb P_{\Delta})\leq \alpha}$.
  The least favorable pair $\mb Q^{-\star}_{\Delta}\in \mc P^-_{\Delta}$ and $\mb Q_{\Delta}^{+\star}\in  \mc P^+_{\Delta}$ satisfies 
  \begin{align*}
    \forall t\geq 0,~\mb Q^-_{\Delta}\in \mc P^-_{\Delta}:& \quad  \textstyle \int \one{c^\star_\Delta(\xi)<t} \d \mb Q^-_{\Delta}(\xi) \geq \int \one{c^\star_\Delta(\xi)<t} \d \mb Q^{-\star}_{\Delta}(\xi),\\
    \forall t\geq 0,~\mb Q^+_{\Delta}\in \mc P^+_{\Delta}: &\quad \textstyle \int \one{c^\star_\Delta(\xi)<t} \d \mb Q^+_{\Delta}(\xi) \leq \int \one{c^\star_\Delta(\xi)<t} \d \mb Q^{+\star}_{\Delta}(\xi).
  \end{align*}
\end{lemma}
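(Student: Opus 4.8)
The plan is to reduce the stochastic-dominance statement to a one-sided comparison of cumulative masses on left half-lines, and then read off the extremal half-line masses of the least favorable pair directly from their explicit densities. Since $g$ is logconcave the map $\xi \mapsto c_\Delta(\xi)$ is strictly increasing, and $c^\star_\Delta = \max\{c', \min\{c_\Delta, 1/c'\}\}$ is its clipping to $[c', 1/c']$, hence nondecreasing in $\xi$. Consequently the level-set structure is simple: for $t \leq c'$ we have $\one{c^\star_\Delta(\xi) < t} \equiv 0$, for $t > 1/c'$ we have $\one{c^\star_\Delta(\xi) < t} \equiv 1$, and for $c' < t \leq 1/c'$ there is a unique $s = s(t)$ with $c_\Delta(s) = t$ such that $\{\xi : c^\star_\Delta(\xi) < t\} = (-\infty, s)$. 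In the two extreme ranges of $t$ both sides of each claimed inequality are $0$ or equal the total mass $1$, so the inequalities hold with equality; it therefore suffices to treat the half-line case, where moreover $s$ lies in the middle band $\{c' < c_\Delta < 1/c'\}$ on which $q^{-\star}_\Delta = g(\cdot+\Delta)$ and $q^{+\star}_\Delta = g(\cdot-\Delta)$.

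For the first inequality I will establish the sharp identity $\mb Q^{-\star}_\Delta((-\infty, s)) = \mb P_{-\Delta}((-\infty, s)) - \alpha$ for every such $s$. Recalling that $\mb P_{-\Delta}$ has density $g(\cdot + \Delta)$, on the region $\{c_\Delta \leq c'\}$ (a half-line to the left of $s$) the density $q^{-\star}_\Delta$ is a downscaling of $g(\cdot+\Delta)$, while on the remaining part of $(-\infty, s)$, contained in the middle band, the two densities coincide. Hence $\mb P_{-\Delta}((-\infty,s)) - \mb Q^{-\star}_\Delta((-\infty,s)) = \int (g(\xi+\Delta) - q^{-\star}_\Delta(\xi))\one{c_\Delta(\xi) \leq c'}\, \d\xi$, which is exactly the quantity shown to equal $\alpha$ in the proof of \cref{lemma:least-favorable-pair}. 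On the other hand, any $\mb Q^-_\Delta \in \mc P^-_\Delta$ obeys $\TV(\mb Q^-_\Delta, \mb P_{-\Delta}) \leq \alpha$, so by the very definition of the total variation distance $\mb Q^-_\Delta(E) \geq \mb P_{-\Delta}(E) - \alpha$ for every event $E$. Taking $E = (-\infty, s)$ yields $\int \one{c^\star_\Delta < t}\,\d\mb Q^-_\Delta = \mb Q^-_\Delta((-\infty,s)) \geq \mb P_{-\Delta}((-\infty,s)) - \alpha = \mb Q^{-\star}_\Delta((-\infty,s)) = \int \one{c^\star_\Delta < t}\, \d \mb Q^{-\star}_\Delta$, which is the claim.

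The second inequality is entirely symmetric: using that $\mb P_\Delta$ has density $g(\cdot-\Delta)$ and that on $\{c_\Delta \leq c'\}$ the density $q^{+\star}_\Delta$ now exceeds $g(\cdot-\Delta)$ by $(c' g(\xi+\Delta) - g(\xi-\Delta))/(1+c')$, the same integral identity, this time recognizing $f_\Delta(c') = \alpha$ directly in its defining form \eqref{eq:def of c'}, gives the sharp identity $\mb Q^{+\star}_\Delta((-\infty,s)) = \mb P_\Delta((-\infty,s)) + \alpha$. Combining with the total variation upper bound $\mb Q^+_\Delta(E) \leq \mb P_\Delta(E) + \alpha$, valid for all $\mb Q^+_\Delta \in \mc P^+_\Delta$ and $E = (-\infty, s)$, produces $\mb Q^+_\Delta((-\infty, s)) \leq \mb Q^{+\star}_\Delta((-\infty, s))$, as required.

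The only genuinely delicate point is the bookkeeping in the half-line case: one must verify that the threshold $s$ always falls in the middle band so that (i) the full budget of removed (resp.\ added) mass, namely $\alpha$, is captured inside $(-\infty, s)$ and (ii) no further discrepancy between $q^{\mp\star}_\Delta$ and $g(\cdot\pm\Delta)$ arises on $(-\infty, s)$. This is guaranteed precisely because $c' < t \leq 1/c'$ forces $c' < c_\Delta(s) \leq 1/c'$, and it is here that the defining equation \eqref{eq:def of c'} for $c'$, tying the removed and added mass to $\alpha$, does the essential work. Everything else is the routine extremal characterization of the total variation ball.
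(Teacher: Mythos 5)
Your proof is correct and follows essentially the same route as the paper's: both reduce to the level sets $\{c^\star_\Delta < t\}$ (trivial for $t \leq c'$ and $t \geq 1/c'$), show via the explicit densities and the defining equation \eqref{eq:def of c'} that the least favorable pair exactly saturates the extremal masses $\mb P_{-\Delta}(E) - \alpha$ and $\mb P_{\Delta}(E) + \alpha$ on these sets, and then invoke the event-wise total variation inequality for all competitors. The only difference is presentational---you parametrize the level set as a half-line $(-\infty, s)$ using the monotonicity of $c_\Delta$, whereas the paper integrates over the level sets directly and obtains the lower bound by passing to complements---so the substance is identical.
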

\begin{proof}
  Given the definition of the function $c^\star_\Delta(\xi)$ in Equation (\ref{eq:huber-varphi}) the statement is trivially true for any $t\leq c'$ and $t\geq 1/c'$.

  Consider hence an arbitrary $t\in (c', 1/c')$. It can be remarked that
  \begin{align*}
    \int \one{c^\star_\Delta(\xi)<t} \d \mb Q^{-\star}_{\Delta}(\xi) = & \int \one{c^\star_\Delta(\xi)<t} q^{-\star}_{\Delta}(\xi)  \d\xi \\
    = & \int\one{c^\star_\Delta(\xi)\leq c'} q^{-\star}_{\Delta}(\xi) \d \xi+ \int \one{c'<c^\star_\Delta(\xi)<t} q^{-\star}_{\Delta}(\xi) \d \xi\\
    = & \int g(\xi+\Delta) \one{c^\star_\Delta(\xi)\leq c'} \d\xi - \alpha + \int \one{c'<c^\star_\Delta(\xi)<t} g(\xi+\Delta) \d \xi \\
    = & \int g(\xi+\Delta) \one{c^\star_\Delta(\xi)<t} \d\xi -\alpha.
  \end{align*}
  Remark that $\TV(\mb Q^-_{\Delta}, \mb P_{-\Delta})\leq \alpha$ implies that
  \(
  \mb Q_{-\Delta}(E) \leq \mb P_{-\Delta}(E) + \alpha
  \)
  for any set $E\subseteq \Xi$. Hence, with $E= \set{\xi}{c^\star_\Delta(\xi)\geq t}$ we get
  \(
  \int \one{c^\star_\Delta(\xi)\geq t} \d \mb Q_{-\Delta}(\xi) \leq \int \one{c^\star_\Delta(\xi)\geq t} \d \mb P_{-\Delta}(\xi) + \alpha .
  \)
  After taking complements
  \(
  \int \one{c^\star_\Delta(\xi)< t} \d \mb Q_{-\Delta}(\xi) \geq \int \one{c^\star_\Delta(\xi)< t} \d \mb P_{-\Delta}(\xi) - \alpha = \int g(\xi+\Delta) \one{c^\star_\Delta(\xi)<t} \d\xi -\alpha
  \)
  the first claim follows. The second claim can be proven completely analogously.
\end{proof}

The previous two first-order stochastic dominance results are equivalent to the observation that for any increasing measurable function $u:\Re\to\Re$ it follows that we have $\E{\mb Q^-_{\Delta}}{u(c^\star_\Delta(\xi))}\leq \E{\mb Q_{\Delta}^{-\star}}{u(c^\star_\Delta(\xi))}$ and $\E{\mb Q^+_{\Delta}}{u(c^\star_\Delta(\xi))}\geq \E{\mb Q_{\Delta}^{+\star}}{u(c^\star_\Delta(\xi))}$ for all $\mb Q^-_{\Delta}\in \mc P^-_{\Delta}$ and $\mb Q^+_{\Delta}\in \mc P^+_{\Delta}$. This stochastic dominance result allowed \citet{huber1981robust} to derive minimax optimal tests between composite hypothesis classes $\mc P_{-\Delta}$ and $\mc P_{\Delta}$ depicted in Figure \ref{fig:resolution}.
We now show that this result also implies that the pair special least favorable continuous distributions $\mb Q^{-\star}_{\Delta}\in \mc P^-_{\Delta}$ and $\mb Q_{\Delta}^{+\star}\in \mc P^+_{\Delta}$ are optimal in problem (\ref{eq:statistical-resolution-function-optimization}).

\end{toappendix}

\begin{proposition}
  \label{lemma:huber-optimal-solution}
  The minimum in problem (\ref{eq:statistical-resolution-function-optimization}) is achieved at the point
  $(\r^{\alpha}(\Delta), \mb Q^{-\star}_{\Delta}, \mb Q^{+\star}_{\Delta}, \hat{\mb  P}^\star)$ where $\hat{\mb P}^\star$ is a the continuous distribution with symmetric density $$\hat p^\star (\xi) \defn \tfrac{\sqrt{q^{-\star}_{\Delta}(\xi) q^{+\star}_{\Delta}(\xi)}}{\int\! \sqrt{q^{-\star}_{\Delta}(\xi)q^{+\star}_{\Delta}(\xi)}\, \d\xi}$$ for all $\xi\in\Re$ and
  $$\r^{\alpha}(\Delta) = -\log\left(\int\sqrt{q^{-\star}_{\Delta}(\xi)q^{+\star}_{\Delta}(\xi)}\, \d\xi\right).$$
\end{proposition}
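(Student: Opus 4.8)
The plan is to prove the claimed value by sandwiching the optimal value of \eqref{eq:statistical-resolution-function-optimization} between matching lower and upper bounds, with the upper bound attained at the proposed tuple. For the lower bound, I would take an arbitrary feasible $(r,\hat{\mb P},\mb Q^+_\Delta,\mb Q^-_\Delta)$ and first pass to the average of the two divergence constraints, using $r\geq \tfrac12(\KL(\hat{\mb P},\mb Q^+_\Delta)+\KL(\hat{\mb P},\mb Q^-_\Delta))$. The key identity is that this average telescopes: writing $\mb R$ for the distribution with normalized density proportional to $\sqrt{q^+_\Delta q^-_\Delta}$ and $Z=\int\sqrt{q^+_\Delta q^-_\Delta}\,\d\xi$, one has $\tfrac12(\KL(\hat{\mb P},\mb Q^+_\Delta)+\KL(\hat{\mb P},\mb Q^-_\Delta))=\KL(\hat{\mb P},\mb R)-\log Z$, since $\hat p^2/(q^+_\Delta q^-_\Delta)=(\hat p/\sqrt{q^+_\Delta q^-_\Delta})^2$. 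As $\KL(\hat{\mb P},\mb R)\geq 0$ (and the whole inequality is vacuous when absolute continuity fails and the divergences are $+\infty$), every feasible point satisfies $r\geq -\log\int\sqrt{q^+_\Delta q^-_\Delta}\,\d\xi$; Cauchy--Schwarz gives $Z\leq 1$, consistent with $r\geq 0$.

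To upgrade this into the claimed value I would show that the Bhattacharyya affinity is maximized at the least favorable pair, i.e.\ $\int\sqrt{q^+_\Delta q^-_\Delta}\,\d\xi\leq\int\sqrt{q^{+\star}_\Delta q^{-\star}_\Delta}\,\d\xi$ for all $\mb Q^\pm_\Delta\in\mc P^\pm_\Delta$. With $c^\star_\Delta=\d\mb Q^{+\star}_\Delta/\d\mb Q^{-\star}_\Delta$ as in \eqref{eq:huber-varphi}, I split the integrand as $\sqrt{q^+_\Delta q^-_\Delta}=\sqrt{q^+_\Delta/\sqrt{c^\star_\Delta}}\cdot\sqrt{q^-_\Delta\sqrt{c^\star_\Delta}}$ and apply the arithmetic--geometric mean inequality pointwise to get $\sqrt{q^+_\Delta q^-_\Delta}\leq\tfrac12\big(q^+_\Delta/\sqrt{c^\star_\Delta}+q^-_\Delta\sqrt{c^\star_\Delta}\big)$. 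Integrating yields $\int\sqrt{q^+_\Delta q^-_\Delta}\,\d\xi\leq\tfrac12\big(\E{\mb Q^+_\Delta}{1/\sqrt{c^\star_\Delta}}+\E{\mb Q^-_\Delta}{\sqrt{c^\star_\Delta}}\big)$, and since $c^\star_\Delta$ takes values in the bounded interval $[c',1/c']$ these expectations are finite. Because $\sqrt{\cdot}$ is increasing and $1/\sqrt{\cdot}$ decreasing in $c^\star_\Delta$, the first-order stochastic dominance of \cref{lemma:stochastic-dominance} replaces $\mb Q^-_\Delta,\mb Q^+_\Delta$ by $\mb Q^{-\star}_\Delta,\mb Q^{+\star}_\Delta$; evaluating the two expectations at the star pair, each equals $\int\sqrt{q^{+\star}_\Delta q^{-\star}_\Delta}\,\d\xi$, so the affinity bound and hence $r\geq -\log\int\sqrt{q^{+\star}_\Delta q^{-\star}_\Delta}\,\d\xi$ follow.

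For achievability I would verify that the proposed tuple is feasible and attains this value. The total variation constraints hold by \cref{lemma:least-favorable-pair}, and $\hat p^\star$ is a genuine density by its normalization. The symmetry $g(\xi)=g(-\xi)$ together with the explicit forms \eqref{eq:least-favorable-}--\eqref{eq:least-favorable+} and $c_\Delta(-\xi)=1/c_\Delta(\xi)$ yields the reflection identity $q^{+\star}_\Delta(-\xi)=q^{-\star}_\Delta(\xi)$; this makes $\hat p^\star$ symmetric and, after the substitution $\xi\mapsto-\xi$, forces $\KL(\hat{\mb P}^\star,\mb Q^{+\star}_\Delta)=\KL(\hat{\mb P}^\star,\mb Q^{-\star}_\Delta)$. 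Summing the two divergences the densities cancel, since $\hat p^{\star2}/(q^{+\star}_\Delta q^{-\star}_\Delta)=1/Z^2$, leaving $\int\hat p^\star\log(1/Z^2)\,\d\xi=-2\log Z$; by the established equality each divergence equals $-\log Z=-\log\int\sqrt{q^{+\star}_\Delta q^{-\star}_\Delta}\,\d\xi$. Thus the tuple is feasible with objective value equal to the lower bound, which simultaneously identifies the minimizer and the value $\r^{\alpha}(\Delta)$.

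The main obstacle is the affinity-maximization step, as it is the only place that genuinely exploits the least favorable structure: the clean route through the arithmetic--geometric mean split followed by \cref{lemma:stochastic-dominance} is the crux, the remaining steps being nonnegativity of the Kullback--Leibler divergence and routine symmetry bookkeeping. The one point requiring care is that all divergences must be read as $+\infty$ whenever the relevant absolute continuity fails, so that the lower bound holds vacuously in those degenerate cases and the argument is unaffected.
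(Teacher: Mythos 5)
Your proof is correct, but your lower bound travels a genuinely different route from the paper's. The paper bounds the optimal value from below via the Donsker--Varadhan representation: it plugs the fixed test function $\varphi^\star_\Delta = \tfrac12\log c^\star_\Delta$ into the variational formula, obtaining that $\KL(\hat{\mb P}, \mb Q^-_\Delta) \geq \r^{\alpha}(\Delta)$ whenever $\int \varphi^\star_\Delta \,\d\hat{\mb P}\geq 0$ and symmetrically $\KL(\hat{\mb P}, \mb Q^+_\Delta) \geq \r^{\alpha}(\Delta)$ whenever $\int \varphi^\star_\Delta \,\d\hat{\mb P}\leq 0$, so that any putative feasible point with $r'<\r^{\alpha}(\Delta)$ forces the contradictory strict inequalities $\int \varphi^\star_\Delta\, \d\hat{\mb P}' <0$ and $\int \varphi^\star_\Delta \,\d\hat{\mb P}' >0$. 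You instead average the two divergence constraints and use the exact geometric-mixture identity $\tfrac12\bigl(\KL(\hat{\mb P},\mb Q^+_\Delta)+\KL(\hat{\mb P},\mb Q^-_\Delta)\bigr)=\KL(\hat{\mb P},\mb R)-\log Z$, reducing everything to nonnegativity of the Kullback--Leibler divergence plus a maximization of the Bhattacharyya affinity over the two total-variation balls, which you settle by an arithmetic--geometric mean split weighted by $\sqrt{c^\star_\Delta}$ followed by \cref{lemma:stochastic-dominance} (applied once to the increasing function $\sqrt{c^\star_\Delta}$ and once, with the inequality flipped, to the decreasing function $1/\sqrt{c^\star_\Delta}$). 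Both proofs lean on the same stochastic dominance lemma, and your achievability computation is essentially the paper's symmetry bookkeeping in disguise (your reflection identity $q^{+\star}_\Delta(-\xi)=q^{-\star}_\Delta(\xi)$ is equivalent to the oddness of $\varphi^\star_\Delta$ that the paper invokes). What each approach buys: yours is more elementary---no variational representation, no case dichotomy---and makes transparent that $\r^{\alpha}(\Delta)$ is exactly the minimal Bhattacharyya distance between the two corruption balls, which cleanly explains the $\alpha=0$ specialization in \cref{lemma:wc-density}; the paper's route, by contrast, constructs the lower bound directly out of the likelihood-ratio test $\varphi^\star_\Delta$, which is then reused verbatim in the Chernoff argument proving \cref{thm:huber-optimal}, so it ties the optimality certificate to the Huber estimator itself. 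Two small points you correctly flagged but should make explicit in a final write-up: divergences are read as $+\infty$ when absolute continuity fails (which also disposes of the degenerate case $Z=0$, where the supports of $\mb Q^+_\Delta$ and $\mb Q^-_\Delta$ are disjoint), and your boundedness remark $c^\star_\Delta\in[c',1/c']$ with $c'>0$ requires $\alpha>0$---for $\alpha=0$ the balls collapse to singletons and the affinity-maximization step is vacuous.
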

\begin{appendixproof}[Proof of \cref{lemma:huber-optimal-solution}]
  Observe that feasibility follows from direct substitution, e.g.,
  \begin{align*}
    \KL(\hat{\mb P}^\star, \mb Q^{-\star}_{\Delta}) = & \int \log\left(\frac{\sqrt{q^{-\star}_{\Delta}(\xi) q^{+\star}_{\Delta}(\xi)}}{q^{-\star}_{\Delta}(\xi)\textstyle\int\! \sqrt{q^{-\star}_{\Delta}(\xi)q^{+\star}_{\Delta}(\xi)}\d\xi} \right) \hat p^\star(\xi) \d \xi \\
    = & -\log\left(\int\sqrt{q^{-\star}_{\Delta}(\xi)q^{+\star}_{\Delta}(\xi)}\, \d\xi\right) +\frac 12 \int \log\left(\frac{q^{+\star}_{\Delta}(\xi)}{q^{-\star}_{\Delta}(\xi)} \right) \hat p^\star(\xi) \d \xi\\
    = & -\log\left(\int\sqrt{q^{-\star}_{\Delta}(\xi)q^{-\star}_{\Delta}(\xi)}\, \d\xi\right) = \r^{\alpha}(\Delta)
  \end{align*}
  where the penultimate equality follows the oddness of the function $c^\star_\Delta(\xi) \defn \tfrac{q^{+\star}_{\Delta}(\xi)}{q^{-\star}_{\Delta}(\xi)}$ and the symmetry of $\hat p^\star$.
  Observe that following the Donsker-Varadhan respresentation of the Kullback-Leibler divergence we have for any $\hat{\mb P}$ with  $\int \varphi^\star_\Delta(\xi)\, \d \hat{\mb P}(\xi) \geq 0$ where $\varphi^\star_\Delta(\xi) \defn \log(c^\star_\Delta(\xi))/2$ that
  \begin{align}
    \KL(\hat{\mb P}, \mb Q^-_{\Delta}) = & \sup_{\Theta\in \mc C_b} \int \Theta(\xi) \d \hat{\mb P}(\xi) - \log \int \exp(\Theta(\xi)) \d \mb Q^-_{\Delta}(\xi)\nonumber\\
    \geq & \int \varphi^\star_\Delta(\xi) \,\d \hat{\mb P}(\xi) - \log \int \exp(\varphi^\star_\Delta(\xi)) \d \mb Q^-_{\Delta}(\xi)\nonumber\\
    \geq & -\log \int \sqrt{c^\star_\Delta(\xi)} \d \mb Q^-_{\Delta}(\xi) \nonumber \\
    \geq & -\log \int \sqrt{c^\star_\Delta(\xi)} \d \mb Q^{-\star}_{\Delta}(\xi) = -\log\int\sqrt{q^{-\star}_{\Delta}(\xi)q^{+\star}_{\Delta}(\xi)}\, \d\xi = \r^{\alpha}(\Delta) \label{eq:kl-inequality-1}
  \end{align}
  where $\mc C_b$ denotes all bounded continuous functions on $\Re$. The first inequality follows from the fact that $\varphi^\star_\Delta\in \mc C_b$. The second inequality follows from the premisse $\int \varphi^\star_\Delta(\xi)\, \d \hat{\mb P}(\xi) \geq 0$. The third inequality follows from Lemma \ref{lemma:stochastic-dominance} as $\xi\mapsto\sqrt{c^\star_\Delta(\xi)}$ is a nondecreasing function.
  Similarly, we have that for any $\hat{\mb P}$ with $ \int \varphi^\star_\Delta(\xi)\, \d \hat{\mb P}(\xi) \leq 0$ and $\mb Q^+_{\Delta}\in \mc P^+_{\Delta}$ that
  \begin{align}
    \KL(\hat{\mb P}, \mb Q^+_{\Delta}) \geq & ~ \r^{\alpha}(\Delta) \label{eq:kl-inequality-2}
  \end{align}
  by a completely analagous symmetric proof.
  
  For the sake of contradiction, assume that we have found $(r',  \hat{\mb P}_\Delta', \mb Q^{+'}_{\Delta}, \mb Q^{-'}_{\Delta})$ with $r'<\r^{\alpha}(\Delta)$ feasible in problem (\ref{eq:statistical-resolution-function-optimization}). That is, we have $\KL(\hat{\mb P}_\Delta', \mb Q^{+'}_{\Delta})\leq r' < \r^{\alpha}(\Delta)$ and hence from inequality \eqref{eq:kl-inequality-1} we must have that $\int \varphi_\Delta^\star(\xi)\, \d \hat{\mb P}'(\xi) < 0$. Similarly,  we have $\KL(\hat{\mb P}_\Delta', \mb Q^{+'}_{\Delta})\leq r' < \r^{\alpha}(\Delta)$ and hence from inequality \eqref{eq:kl-inequality-2} we must also have that $\int \varphi^\star_\Delta(\xi)\, \d \hat{\mb P}'(\xi) > 0$. We have reached a contradiction.
\end{appendixproof}

\subsection{Limiting Regimes}

In this section, we discuss different regimes of estimators depending on the desired statistical resolution $r$ and corruption level $\alpha$. In particular, we show that the mean and median naturally induce worst-case minimal confidence intervals in the cases of $r>0, \alpha=0$ and $r=0,\alpha>0$ respectively, but are generally suboptimal in the case $r,\alpha >0$. Furthermore, we highlight that the statistical resolution of any estimator is limited in the face of corruption at level $\alpha>0$.

\subsubsection{Mean Regime $r>0,\alpha=0$}

We study here the special case $\alpha=0$ in which we assume that the data are directly sampled from the distribution $\mb P_{\theta^\star}$ from which we try to learn $\theta^\star\in \Theta$ without any corruption.
In this regime Equation \eqref{eq:def of c'} implies $c'=0$. Hence, the Huber estimator $E^{\star}_{\Delta}$ for $\Delta\downarrow \KLrad{r}{0}$ which is worst-case minimal by Theorem \ref{thm:huber-optimal} reduces to a generalized estimator estimator.

\begin{definition}[Generalized Mean]
  \label{ex:generalized_mean}
  The generalized mean $E_\Delta^\mean(\mb P_n)$ is the $\varphi$-estimator with $\varphi_\Delta(\xi) \defn \log(\tfrac{g(\xi-\Delta)}{g(\xi+\Delta)})/2$ for all $\Delta\geq 0$.
\end{definition}

Recall here that the log-concave function $g$ denotes the density of the distribution $\mb P_0$ which generates our considered location family as stated in the beginning of Section \ref{sec:location-estimation}.
The classical empirical mean is a $\varphi$-estimator with $\varphi(\xi) = \xi$ for all $\xi \in \Re$ and is the generalized mean in the Gaussian location family discussed in Example \ref{ex: location families} as indeed
$\varphi_\Delta(\xi) = \log(\tfrac{g(\xi-\Delta)}{g(\xi+\Delta)})/2 = 1/4 (\xi-\Delta)^2/\sigma^2-1/4(\xi+\Delta)^2/\sigma^2=\tfrac{\xi\Delta}{\sigma^2}$.

\begin{corollary}
  \label{lemma:wc-density}
  The statistical resolution function satisfies
  \begin{equation}
    \label{eq:Bhattacharyya}
    \r^{0}(\Delta) = -\log\left(\int \sqrt{g(\xi-\Delta)g(\xi+\Delta)}\, \d\xi\right).
  \end{equation}
\end{corollary}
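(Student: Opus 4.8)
\textbf{Proof plan for Corollary \ref{lemma:wc-density}.}

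The plan is to specialize Proposition \ref{lemma:huber-optimal-solution} to the corruption-free regime $\alpha = 0$. Recall that Proposition \ref{lemma:huber-optimal-solution} establishes the general formula $\r^{\alpha}(\Delta) = -\log\bigl(\int \sqrt{q^{-\star}_{\Delta}(\xi) q^{+\star}_{\Delta}(\xi)}\, \d\xi\bigr)$, where $q^{-\star}_{\Delta}$ and $q^{+\star}_{\Delta}$ are the least favorable densities defined in Equations \eqref{eq:least-favorable-} and \eqref{eq:least-favorable+}. The key observation is that the entire corollary reduces to identifying what these least favorable densities become when $\alpha = 0$.

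First I would determine the value of the constant $c'$ when $\alpha = 0$. By the defining condition \eqref{eq:def of c'}, we require $f_\Delta(c') = \int \bigl(c' g(\xi+\Delta) - g(\xi-\Delta)\bigr)^+ \d\xi/(1+c') = 0$. Since the integrand is nonnegative, this forces $c' g(\xi+\Delta) \leq g(\xi-\Delta)$ for (almost) every $\xi$, i.e.\ $c' \leq c_\Delta(\xi) = g(\xi-\Delta)/g(\xi+\Delta)$ everywhere; combined with $c' \geq 0$, the largest such value consistent with the construction is $c' = 0$, as already noted in the text preceding the statement. I would confirm this is the correct selection by noting it matches the claim $c'=0$ in the surrounding discussion of the mean regime.

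Next I would substitute $c' = 0$ into the piecewise definitions of $q^{-\star}_{\Delta}$ and $q^{+\star}_{\Delta}$. With $c' = 0$ the boundary condition $c_\Delta(\xi) \leq c'$ holds only on a set where $g(\xi-\Delta) = 0$, and symmetrically $1/c' = +\infty$ so the condition $c_\Delta(\xi) \geq 1/c'$ is never met; hence on the essential support the middle branch prevails, giving $q^{-\star}_{\Delta}(\xi) = g(\xi+\Delta)$ and $q^{+\star}_{\Delta}(\xi) = g(\xi-\Delta)$. In other words, with no corruption the least favorable distributions collapse onto the uncorrupted parametric densities $\mb P_{-\Delta}$ and $\mb P_{\Delta}$ themselves, which is intuitively clear since the adversary has no budget. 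Substituting these into the formula from Proposition \ref{lemma:huber-optimal-solution} yields
\[
  \r^{0}(\Delta) = -\log\left(\int \sqrt{g(\xi+\Delta)\, g(\xi-\Delta)}\, \d\xi\right),
\]
which is exactly the claimed expression \eqref{eq:Bhattacharyya}, the negative log of the Bhattacharyya coefficient between $\mb P_{-\Delta}$ and $\mb P_{\Delta}$.

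The only mild technical care needed is in handling the degenerate branches at $c' = 0$ rigorously, since the piecewise formulas involve the ratio $1/c'$ which is undefined at $c' = 0$; I expect the cleanest route is either to take the limit $c' \downarrow 0$ in Equations \eqref{eq:least-favorable-}--\eqref{eq:least-favorable+} and invoke continuity/dominated convergence to pass the limit through the integral $\int \sqrt{q^{-\star}_{\Delta} q^{+\star}_{\Delta}}\,\d\xi$, or to verify directly that $\TV(\mb P_{\Delta}, \mb P_{\Delta}) = 0 = \alpha$ so that the pair $(\mb P_{-\Delta}, \mb P_{\Delta})$ is feasible and then reapply the optimality argument of Proposition \ref{lemma:huber-optimal-solution} in this special case. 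This boundary bookkeeping is the main, though minor, obstacle; the core of the result is an immediate corollary of the general formula once $c' = 0$ is established.
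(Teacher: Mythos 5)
Your proposal is correct and follows essentially the same route as the paper's own (one-line) proof: specialize Proposition \ref{lemma:huber-optimal-solution} to $\alpha=0$, note that $f_\Delta(0)=0=\alpha$ so one may take $c'=0$, under which the least favorable densities \eqref{eq:least-favorable-}--\eqref{eq:least-favorable+} collapse to $q^{-\star}_\Delta(\xi)=g(\xi+\Delta)$ and $q^{+\star}_\Delta(\xi)=g(\xi-\Delta)$, yielding \eqref{eq:Bhattacharyya}. Your extra bookkeeping on the degenerate branches is sound (and in fact simpler than you suggest, since $g>0$ everywhere means the branch $c_\Delta(\xi)\leq 0$ is empty and $1/c'=\infty$ is never attained), but it is the same argument the paper compresses into a single sentence.
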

\begin{appendixproof}[Proof of \cref{lemma:wc-density}]
  This is a simple corollary of \cref{lemma:huber-optimal-solution} where here $\mb Q_{\Delta}^{-\star} = \mb P_{-\Delta}^\star$ and $\mb Q^+_{\Delta}=\mb P_\Delta$ associated with the solution $c'=0$ as indeed $f_\Delta(0) = \int\max(- g(\xi-\Delta), 0) \d\xi/(1+c') = 0 = \alpha $.
\end{appendixproof}

\begin{corollary} 
We have that $E^\mean_{\Delta}$ satisfies Equation (\ref{eq:point-estimate:guarantee:noise:r}) for all $\Delta> \KLrad{r}{0}$ with $r>0$.
\end{corollary}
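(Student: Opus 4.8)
The plan is to show that the generalized mean estimator $E^\mean_\Delta$ coincides with the worst-case minimal Huber estimator $E^\star_\Delta$ in the special case $\alpha = 0$, and then invoke Theorem~\ref{thm:huber-optimal} directly. The key observation is that when $\alpha = 0$ we have $c' = 0$ from Equation~\eqref{eq:def of c'} (as noted just before Definition~\ref{ex:generalized_mean}), so the Huber influence function $\varphi^\star_\Delta(\xi) = \tfrac12 \log(\d\Qb^\star_\Delta/\d\Qb^\star_{-\Delta})(\xi)$ collapses. With $c' = 0$, the least favorable distributions from Corollary~\ref{lemma:wc-density} become $\Qb^{-\star}_\Delta = \Pb_{-\Delta}$ and $\Qb^{+\star}_\Delta = \Pb_\Delta$, whose densities are $g(\cdot + \Delta)$ and $g(\cdot - \Delta)$ respectively. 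Hence $\varphi^\star_\Delta(\xi) = \tfrac12 \log(g(\xi-\Delta)/g(\xi+\Delta))$, which is precisely the influence function $\varphi_\Delta$ defining the generalized mean in Definition~\ref{ex:generalized_mean}. Since both estimators are the $\varphi$-estimators associated with the same influence function, they coincide: $E^\mean_\Delta = E^\star_\Delta$.

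First I would verify the identification $c' = 0$ at $\alpha = 0$ rigorously, pointing to the computation $f_\Delta(0) = \int \max(-g(\xi-\Delta), 0)\, \d\xi/(1+c') = 0 = \alpha$ already recorded in the proof of Corollary~\ref{lemma:wc-density}. Then I would substitute $c' = 0$ into the definitions~\eqref{eq:least-favorable-} and~\eqref{eq:least-favorable+}: in the regime $c' < c_\Delta(\xi) < 1/c'$ (which, with $c'=0$, covers all $\xi$ for which $0 < g(\xi-\Delta)/g(\xi+\Delta) < \infty$, i.e.\ essentially the whole support) we read off $q^{-\star}_\Delta(\xi) = g(\xi+\Delta)$ and $q^{+\star}_\Delta(\xi) = g(\xi-\Delta)$. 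This confirms $\Qb^{-\star}_\Delta = \Pb_{-\Delta}$ and $\Qb^{+\star}_\Delta = \Pb_\Delta$ and yields the claimed form of $\varphi^\star_\Delta$.

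With the estimators identified, the result is immediate: Theorem~\ref{thm:huber-optimal} guarantees that $E^\star_\Delta$ satisfies the coverage guarantee~\eqref{eq:point-estimate:guarantee:noise:r} for any $\Delta > \KLrad{r}{\alpha}$ with $r > 0$, and specializing to $\alpha = 0$ gives exactly the statement that $E^\mean_\Delta = E^\star_\Delta$ satisfies~\eqref{eq:point-estimate:guarantee:noise:r} for all $\Delta > \KLrad{r}{0}$ with $r > 0$.

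**The main obstacle** I expect is purely notational rather than mathematical: one must be careful that the boundary cases $g(\xi-\Delta) = 0$ or $g(\xi+\Delta) = 0$ (where $c_\Delta(\xi)$ hits $0$ or $+\infty$) are handled correctly when setting $c' = 0$, since the piecewise definition of $q^{\pm\star}_\Delta$ in~\eqref{eq:least-favorable-}--\eqref{eq:least-favorable+} nominally branches on $c_\Delta(\xi) \leq c' = 0$ and $1/c' = \infty$. Because $g$ is a strictly positive log-concave density (so $g(\cdot) \in \Re_{++}$ on $\Xi = \Re$), the ratio $c_\Delta(\xi)$ is strictly positive and finite everywhere, placing every $\xi$ in the middle branch; thus these degenerate branches never occur and the collapse to $\Qb^{-\star}_\Delta = \Pb_{-\Delta}$, $\Qb^{+\star}_\Delta = \Pb_\Delta$ is clean. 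Once this is observed, the corollary follows with no further work.
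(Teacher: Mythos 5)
Your proposal is correct and is essentially the paper's own argument: the paper likewise notes that $\alpha=0$ forces $c'=0$ in \eqref{eq:def of c'} (via $f_\Delta(0)=0$), so the least favorable pair in \eqref{eq:least-favorable-}--\eqref{eq:least-favorable+} collapses to $(\Pb_{-\Delta},\Pb_{\Delta})$, the influence function $\varphi^\star_\Delta$ becomes $\tfrac12\log\bigl(g(\xi-\Delta)/g(\xi+\Delta)\bigr)$ so that $E^\star_\Delta$ is exactly the generalized mean of \cref{ex:generalized_mean}, and the coverage guarantee follows by specializing \cref{thm:huber-optimal} to $\alpha=0$. Your extra check that the degenerate branches never fire because $g$ is strictly positive is a sound (if tacit in the paper) observation and completes the same route.
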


The previous corollary of Theorem \ref{thm:huber-optimal} implies that the generalized mean is indeed minimal as $\Delta\downarrow \KLrad{r}{0}$ in the regime where $r>0$ and $\alpha =0$. In Figure \ref{fig:diagram} we visualize the region in which the generalized mean estimator is worst-case minimal with a green line.

We observe that in the noiseless regime discussed here the statistical resolution function $\r^{0}(\Delta)$ found in Equation (\ref{eq:Bhattacharyya}) is also better known as the Bhattacharyya distance \citep{bhattacharyya1943measure} between the distributions $\mb P_{\Delta}$ and $\mb P_{-\Delta}$ and satisfies $\r^{0}(\Delta)=0$ if and only if $\Delta=0$.
Hence, \cref{theorem:statistical-resolution-function} guarantees that we have the limit $\lim_{r\downarrow 0}\KLrad{r}{0}=\KLrad{0}{0}=0$.
Practically, this means that we can fence in the unknown location parameter within an arbitrarily small confidence set around the generalized mean at the expense of considering a small $r$ and hence weakening our imposed statistical guarantee (\ref{eq:point-estimate:guarantee:noise:r}). 
Likewise, it can also be observed that $\lim_{\Delta\to\infty} \r^{0}(\Delta)=\infty$ and hence $\KLrad{r}{0}<\infty$ for any $r\geq 0$.
Practically, this means that reversely one can attain the statistical guarantee (\ref{eq:point-estimate:guarantee:noise:r}) at any desirable statistical resolution $r$ at the expense of sufficiently inflating the radius of a confidence set around the generalized mean. As we point out in the next two sections neither one of these perhaps intuitive observations remains valid in case of data corruption.

\subsubsection{Median Regime $r=0,\alpha\geq 0$}\label{sec:median}

In the presence of data corruption it will in general not be possible to pin down the unknown location parameter with arbitrary precision as was the case for the generalized mean in the absence of corruption even if the statistical resolution $r$ in the imposed statistical guarantee (\ref{eq:point-estimate:guarantee:noise:r}) tends to zero. Clearly, we have that $\KLrad{r}{\alpha}\geq \KLrad{0}{\alpha}$ for any $r>0$ and hence $\lim_{r\downarrow 0}\KLrad{r}{\alpha}\geq \KLrad{0}{\alpha}$. 

Remark that in \cref{thm:huber-optimal} we have excluded the corner case $r=0$ which we will now study in more detail.
The following result taken together with the lower bound of \cref{thm:efficiency:location} guarantees that the median estimator, defined in \cref{exp:median}, is worst-case minimal in this particular setting.

\begin{proposition}
  \label{prop:lower-bound-gross-error-margin-noise}
  We have $\rad{E^\median}{0}{\alpha} = \KLrad{0}{\alpha}$ for all $\alpha\geq 0$. 
\end{proposition}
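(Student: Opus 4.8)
The plan is to compute both quantities in closed form and show they equal the corruption-shifted median quantile $G^{-1}(\tfrac12+\alpha)$, where $G(t) := \mb P_0((-\infty,t])$ denotes the cumulative distribution of the symmetric log-concave reference density $g$. First I would pin down $\KLrad{0}{\alpha}$. By \cref{theorem:statistical-resolution-function} we have $\KLrad{0}{\alpha} = \sup\{\abs{\Delta} : \r^{\alpha}(\Delta)\leq 0\}$, and since $\r^{\alpha}(\Delta)\geq 0$ always, the constraint reduces to $\r^{\alpha}(\Delta)=0$. Inspecting \eqref{eq:statistical-resolution-function-optimization}, a value of $0$ forces $\KL(\hat{\mb P},\mb Q^+_\Delta)=\KL(\hat{\mb P},\mb Q^-_\Delta)=0$, hence $\hat{\mb P}=\mb Q^+_\Delta=\mb Q^-_\Delta$; thus $\r^{\alpha}(\Delta)=0$ holds exactly when the two total variation balls around $\mb P_{\Delta}$ and $\mb P_{-\Delta}$ of radius $\alpha$ intersect. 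The midpoint $\tfrac12\mb P_\Delta+\tfrac12\mb P_{-\Delta}$ satisfies $\TV(\tfrac12\mb P_\Delta+\tfrac12\mb P_{-\Delta},\mb P_{\pm\Delta})=\tfrac12\TV(\mb P_\Delta,\mb P_{-\Delta})$, so it witnesses the intersection precisely when $\TV(\mb P_\Delta,\mb P_{-\Delta})\leq 2\alpha$, while the triangle inequality gives the converse. Hence $\r^{\alpha}(\Delta)=0 \iff \TV(\mb P_\Delta,\mb P_{-\Delta})\leq 2\alpha$.

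Next I would evaluate this total variation explicitly. Because $g$ is symmetric and log-concave, hence unimodal, one has $g(\xi-\Delta)\geq g(\xi+\Delta)$ iff $\xi\geq 0$, so $\TV(\mb P_\Delta,\mb P_{-\Delta})=\int_0^\infty \left(g(\xi-\Delta)-g(\xi+\Delta)\right)\d\xi = G(\Delta)-G(-\Delta)=2G(\Delta)-1$, using $G(-\Delta)=1-G(\Delta)$. As $G$ is strictly increasing, $2G(\Delta)-1\leq 2\alpha$ is equivalent to $\Delta\leq G^{-1}(\tfrac12+\alpha)$, whence $\KLrad{0}{\alpha}=G^{-1}(\tfrac12+\alpha)$.

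It then remains to show $\rad{E^\median}{0}{\alpha}=G^{-1}(\tfrac12+\alpha)$; by translation invariance I may fix $\theta^\star=0$. For the upper bound, fix any $\Delta>G^{-1}(\tfrac12+\alpha)$ and any $\Pcor$ with $\TV(\Pcor,\mb P_0)\leq\alpha$; choosing an interior threshold $\Delta'\in(G^{-1}(\tfrac12+\alpha),\Delta)$ the total variation bound gives $\Pcor([\Delta,\infty))\leq\Pcor((\Delta',\infty))\leq 1-G(\Delta')+\alpha<\tfrac12$, and symmetrically $\Pcor((-\infty,-\Delta])<\tfrac12$. By the law of large numbers the empirical masses $\mb P_n([\Delta,\infty))$ and $\mb P_n((-\infty,-\Delta])$ converge to these sub-$\tfrac12$ limits, so eventually fewer than half the samples lie on either side of $[-\Delta,\Delta]$; since $E^\median(\mb P_n)$ is a zero of the non-increasing map $t\mapsto\sum_i\sign(\xi_i-t)$, it must fall in $[-\Delta,\Delta]$ with probability tending to one, giving $\limsup_n\Prob_{\Pcor}(\abs{E^\median(\mb P_n)}>\Delta)=0$. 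For the lower bound, for any $\Delta<m^\star:=G^{-1}(\tfrac12+\alpha)$ I would exhibit a single corrupted distribution defeating \eqref{eq:point-estimate:guarantee:noise:0}: let $\Pcor$ be obtained from $\mb P_0$ by relocating the leftmost $\alpha$-quantile of mass (on $(-\infty,G^{-1}(\alpha)]$) to a point far to the right, so that $\TV(\Pcor,\mb P_0)=\alpha$ and the distribution function of $\Pcor$ equals $G(t)-\alpha$ in a neighborhood of $m^\star$, where $g$ is positive. Then $\Pcor$ has a unique median at $m^\star>\Delta$, the sample median concentrates there, and $\Prob_{\Pcor}(\abs{E^\median(\mb P_n)}>\Delta)\to 1$. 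Taking the infimum over admissible $\Delta$ yields $\rad{E^\median}{0}{\alpha}=m^\star=\KLrad{0}{\alpha}$.

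The main obstacle I anticipate is this last step rather than the identification of $\KLrad{0}{\alpha}$: because the median is not regular (it may be non-unique and $\Pcor$ may carry atoms), the general lower bound of \cref{thm:efficiency:location} does not apply, and both $\limsup$ statements must be derived directly from the law of large numbers. The delicate point is to translate ``the population median lies outside $[-\Delta,\Delta]$'' into ``more than half the mass sits on one side'' without assuming continuity of $\Pcor$. I would handle this by working with the strictly interior threshold $\Delta'$ and the closed half-lines $[\Delta,\infty)$ and $(-\infty,-\Delta]$, which keeps the relevant mass strictly below $\tfrac12$ even in the presence of atoms, and by ensuring the constructed worst-case $\Pcor$ retains a density bounded away from zero near $m^\star$ so that its empirical median genuinely concentrates at $m^\star$.
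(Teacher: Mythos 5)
Your proof is correct, and it takes a genuinely different (and in one respect more complete) route than the paper's. For the upper bound the core insight is shared: the paper also uses the midpoint mixture $(\mb P_{-\Delta}+\mb P_{\Delta})/2$ as a witness, but only to show by contradiction that $\Delta>\KLrad{0}{\alpha}$ forces $\mb P_{-\Delta}(\{\xi<0\})-\alpha>\tfrac12$; it never extracts the closed form $\KLrad{0}{\alpha}=G^{-1}(\tfrac12+\alpha)$ that you compute. The paper then controls the corrupted tail mass uniformly over all $\mb Q^-_{\Delta}$ with $\TV(\mb Q^-_{\Delta},\mb P_{-\Delta})\leq\alpha$ via the stochastic-dominance result (Lemma \ref{lemma:stochastic-dominance}, applied at $t=1$) for Huber's least favorable pair, and finishes with a Chernoff bound for Bernoulli variables, which yields exponential decay---more than the $r=0$ guarantee \eqref{eq:point-estimate:guarantee:noise:0} requires. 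Your direct inequality $\Pcor((\Delta',\infty))\leq 1-G(\Delta')+\alpha$ is exactly what that dominance lemma reduces to at $t=1$ (its proof is the same total-variation inequality $\mb Q(E)\leq \mb P(E)+\alpha$), so you shortcut the Huber machinery at no loss, and the law of large numbers suffices where the paper uses Chernoff. The more substantive difference is the lower bound: the paper's proof establishes only the feasibility direction ($\rad{E^\median}{0}{\alpha}\leq\KLrad{0}{\alpha}$) and, per the surrounding text, delegates the matching inequality to \cref{thm:efficiency:location}---but that theorem assumes a regular estimator, and the paper itself remarks that the median is not regular. Your explicit construction of a corrupted distribution whose population median sits at $G^{-1}(\tfrac12+\alpha)$, with the empirical median concentrating there, gives a self-contained lower bound that needs no regularity, so your argument actually closes a gap that the paper's proof, read literally, leaves open; you correctly anticipated this as the delicate step. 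One minor caveat: your closed form presumes $\alpha<\tfrac12$ so that $G^{-1}(\tfrac12+\alpha)$ is finite; for $\alpha\geq\tfrac12$ both quantities are $+\infty$ (consistent with the unreachable regime of Section \ref{sec:extreme-regime-case}), an edge case neither your proposal nor the paper's proof treats explicitly, and your characterization $\r^{\alpha}(\Delta)=0\iff\TV(\mb P_{\Delta},\mb P_{-\Delta})\leq 2\alpha$ tacitly uses attainment of the minimum in \eqref{eq:statistical-resolution-function-optimization}, which holds by lower semicontinuity and compactness of KL sublevel sets and is used in the same way by the paper.
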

\begin{appendixproof}[Proof of \cref{prop:lower-bound-gross-error-margin-noise}]
 We shall prove that the median estimator satisfies \eqref{eq:point-estimate:guarantee:noise:0} for all $\Delta> \KLrad{0}{\alpha}$. Hence, taking $\Delta\downarrow \KLrad{0}{\alpha}$ establishes the claim.

  Fix $\Delta> \KLrad{0}{\alpha}$.
  As $\Delta> \KLrad{0}{\alpha}$ we first establish that we must have $\int \one{\xi<0} \d \mb P_{-\Delta}(\xi)-\alpha>\tfrac 12$ and by symmetry $\int \one{\xi<0} \d \mb P_{\Delta}(\xi)+\alpha <\tfrac 12$. Indeed, suppose we have for the sake of contradiction $\int \one{\xi<0} \d \mb P_{-\Delta}(\xi)-\alpha\leq \tfrac 12$ and consequently by symmetry also $\int \one{\xi<0} \d \mb P_{\Delta}(\xi)+\alpha \geq \tfrac 12$. Consider the distributions
  \(
    \mb Q^{-}_{\Delta} = \mb Q^{+}_{\Delta} = (\mb P_{-\Delta}+\mb P_{\Delta})/2.
  \)
  We have
  \begin{align*}
    & \TV(\mb Q^{-}_{\Delta}, \mb P_{-\Delta})\\
    = & \frac{1}{2} \int \abs{g(\xi+\Delta)- g(\xi+\Delta)/2- g(\xi-\Delta)/2 } \d \xi \\
    = &\frac{1}{4} \int \abs{g(\xi+\Delta)- g(\xi-\Delta) } \d \xi \\
    = & \frac{1}{4} \int \one{\xi<0} (g(\xi+\Delta)- g(\xi-\Delta))  \d \xi + \frac{1}{4} \int \one{\xi>0} (g(\xi-\Delta)- g(\xi+\Delta))  \d \xi\\
    = & \frac{1}{2} \left(\int \one{\xi<0} \d \mb P_{-\Delta}(\xi) - \int \one{\xi<0} \d \mb P_{\Delta}(\xi)\right)\leq \frac 12 (1/2+\alpha -1/2 +\alpha)=\alpha
  \end{align*}
  where the fourth equality uses the symmetry of $g$.
  We can similarly also establish that $\TV(\mb Q^{+}_{\Delta}, \mb P_{\Delta}) \leq \alpha$. However, this shows that $\r^{\alpha}(\Delta)=0$ and hence $\Delta\leq \KLrad{0}{\alpha}$; a contradiction.

  With this result out of the way, consider the least favorable pair $\mb Q_\Delta^{-\star}$ and $\mb Q_\Delta^{-\star}$ defined in Section \ref{sec:fund-lower-bound}, and the increasing function $\xi\mapsto c_\Delta(\xi) \defn \tfrac{g(\xi-\Delta)}{g(\xi+\Delta)}$.
  Recall from the proof of Lemma \ref{lemma:least-favorable-pair} that
  the condition $f_\Delta(c') \defn \int \max(c' g(\xi+\Delta) - g(\xi-\Delta, 0) \d\xi/(1+c') = \alpha$ ensures that
  \begin{align*}
    & \int \abs{ g(\xi+\Delta) - q^{-\star}_{\Delta}(\xi) } \one{c_\Delta(\xi)\leq c'} \d \xi = \alpha,
     \int \abs{ g(\xi-\Delta) - q^{+\star}_{\Delta}(\xi) } \one{c_\Delta(\xi)\leq c'} \d \xi = \alpha.
  \end{align*}
  Apply now Lemma \ref{lemma:stochastic-dominance} at $t=1$. We obtain the guarantee
  \begin{align*}
   \forall \mb Q^-_{\Delta}\in \mc P^-_{\Delta}:&~  \textstyle \int \one{\xi<0} \d \mb Q^-_{\Delta}(\xi) = \int \one{c_\Delta(\xi)<1} \d \mb Q^-_{\Delta}(\xi)  \geq \int \one{c_\Delta(\xi)<1} \d \mb Q^{-\star}_{\Delta}(\xi) \\
    & \quad = \int \one{\xi<0} \d \mb Q^{-\star}_{\Delta}(\xi) = \int \one{\xi<0} \d \mb P_{-\Delta}(\xi)-\alpha>\frac 12.
  \end{align*}
  
  Remark now that we can write
  \[
    \Prob_{\mb P^c}(|E^\median(\mb P_n)- \theta|> \Delta) = \Prob_{\mb P^c}(E^\median(\mb P_n)> \theta+ \Delta) + \Prob_{\mb P^c}(\theta -\Delta > E^\median(\mb P_n))
  \]
  for any $\mb P^c$.
  It can be remarked that for $E^\median(\mb P_n)> \theta+ \Delta$ it is necessary that  we have $\int \one{\xi-\theta-\Delta\geq 0}\d\mb P_{n}(\xi) \geq 1/2$. Hence, we have 
  $$\Prob_{\mb P^c}(E^\median(\mb P_n)> \theta+ \Delta) \leq \Prob_{\mb P^c}(\textstyle\sum_{i=1}^n  \one{\xi_i-\theta-\Delta\geq  0} /n \geq \frac 12).$$
  Using the translation property of our location family we have
  \begin{align*}
    & \forall \theta \in \Theta,~\forall \mb {P}^c \in \mathcal{P},~\TV(\mb Q, \mb P_\theta)\leq \alpha :~\limsup_{n\to\infty} \Prob_{\mb P^c}(\textstyle\one{\xi_i-\theta-\Delta\geq 0} /n \geq \frac 12)=0 \\
    & \hspace{11em}\iff  \forall \mb Q^-_{\Delta} \in \mathcal{P}^-_{\Delta}: ~\limsup_{n\to\infty} \Prob_{\mb Q^-_{\Delta}}(\textstyle\sum_{i=1}^n  \one{\xi_i\geq 0} /n \geq \frac 12) =0.
  \end{align*}

  Remark that the random variables $\one{\xi_i\geq 0}$ are independent Bernouilli random variables. 
  Applying Chernoff's for Bernouilli random variables yields the tail bound
  \begin{align*}
    & \frac 1n \log \Prob_{\mb Q^-_{\Delta}}(\textstyle\sum_{i=1}^n  \one{\xi_i\geq 0} /n \geq \frac 12)
    \leq  -\frac{1}{2} \log\left(\frac{\tfrac 12}{\int \one{\xi<0} \d \mb Q^{-}_{\Delta}(\xi) }\right)-\frac 12 \log\left(\frac{\tfrac 12}{1-\int \one{\xi<0} \d \mb Q^{-}_{\Delta}(\xi)}\right)<0
  \end{align*}
  as indeed we have established earlier $\int \one{\xi<0} \d \mb Q^{-}_{\Delta}(\xi) > \frac 12$ for all $\mb Q^{-}_{\Delta}\in \mc P_{-\Delta}$.
  We have hence shown that for all $\theta \in \Theta$, $\mb {P}^c \in \mathcal{P}$ with $\TV(\mb {P}^c, \mb P_\theta)\leq \alpha$ we have that $\frac 1n \Prob_{\mb {P}^c}(E^\median(\mb P_n)> \theta+ \Delta)=0$ for all $n\geq 1$. Using an analagous argument we can establish that for all $\theta \in \Theta$, $\mb {P}^c \in \mathcal{P}$ with $\TV(\mb {P}^c, \mb P_\theta)\leq \alpha$ we have that also $\frac 1n \log \Prob_{\mb {P}^c}(\theta -\Delta > E^\median(\mb P_n))<0$ for all $n\geq 1$. Combining both results we get
  \[
    \forall \theta \in \Theta,~\forall \mb {Q} \in \mathcal{P},~\TV(\mb {P}^c, \mb P_\theta)\leq \alpha : ~\frac 1n  \log \Prob_{\mb {P}^c}(|E^\median(\mb P_n)- \theta|> \Delta) <0 \quad \forall n\geq 1
  \]
  and consequently the median estimator satisfies \eqref{eq:point-estimate:guarantee:noise:0}.
\end{appendixproof}

In Figure \ref{fig:diagram} we visualize the region in which the median estimator is worst-case minimal with a blue line. Consider here the distributions $\hat{\mb P}=\mb Q^+_\Delta=\mb Q^-_\Delta = \mb P_0$ where $\mb P_0$ is the distributions generating our location family. Trivially, we have $\r^{\alpha}(\Delta)=0$ for any $\Delta\geq 0$ for which $\TV(\mb P_0, \mb P_{-\Delta}) = \TV(\mb P_0, \mb P_{\Delta}) \leq \alpha$. Consequently, from \cref{theorem:statistical-resolution-function} it follows hence that for $\alpha>0$ we have
\[
  \lim_{r\downarrow 0}\KLrad{r}{\alpha} \geq \KLrad{0}{\alpha} \geq \sup\set{\Delta\geq 0}{\TV(\mb P_0, \mb P_{\Delta})\leq \alpha}>0.
\]
Practically, this means due to data corruption we can not recover the unknown location parameter precisely as was the case in the noiseless regime.

\subsubsection{Unreachable Regime $r\to \infty$, $\alpha>0$}
\label{sec:extreme-regime-case}

\begin{figure}[t]
  \centering
  \includegraphics[width=0.6\textwidth]{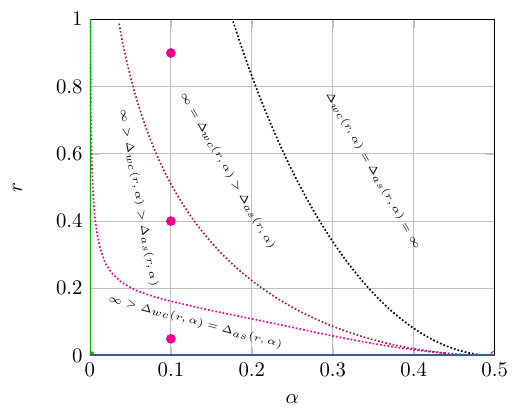}
  \caption{Phase diagram in the statistical resolution / corruption level $(r, \alpha)$ space for a standard normal location family.
  The green (respectively blue) region indicates the regime when the generalized mean (respectively median) is worst-case minimal. The dotted lines indicate phase transitions in terms of worst-case radius $\KLrad{r}{\alpha}$ and almost sure minimal radius $\KLradp{r}{\alpha}$. 
  In particular, the red and black dotted lines indicate the $(r,\alpha)$ phases with existing confidence intervals with non-trivial worst-case radius and almost sure radius respectively.}
  \label{fig:diagram}
\end{figure}

Recall that in the absence of data corruption ($\alpha=0$) one can attain the statistical guarantee (\ref{eq:point-estimate:guarantee:noise:r}) at any desirable statistical resolution $r\geq 0$ by sufficiently inflating the radius of the confidence set estimates based on for instance the generalized mean. We shall show here that when data corruption is present we may have that $\KLrad{r}{\alpha} = \infty$ for $r\in\Re$ which implies that the statistical guarantee (\ref{eq:point-estimate:guarantee:noise:r}) can not be attained by any regular confidence set predictor with bounded confidence sets estimates.

To do so, for any $\Delta$, we will construct a feasible solution in Equation \eqref{eq:statistical-resolution-function-optimization} with a finite objective less than $\bar{r}(\alpha)\defn \frac 12 \log\left(\tfrac{1}{(2\alpha)}\right) + \frac{1}{2} \log\left(\tfrac{1}{(2(1-\alpha))}\right)$. This implies $\r^\alpha(\Delta) < \bar{r}(\alpha)$ for any $\Delta$, and therefore $\KLrad{r}{\alpha} = \infty$ for any $r \geq \bar{r}(\alpha)$ using \cref{theorem:statistical-resolution-function}.
Let us introduce the distributions
\begin{align*}
  \mb Q^-_{\Delta} = & (1-\alpha) \mb P_{-\Delta} + \alpha \mb P_{\Delta},\\
  \mb Q^+_{\Delta} = & (1-\alpha) \mb P_{\Delta} + \alpha \mb P_{-\Delta},\\
  \hat{\mb P} = & (\mb P_{\Delta} + \mb P_{-\Delta})/2.
\end{align*}
It can easily be verified that indeed $\TV(\mb P_{-\Delta}, \mb Q^-_{\Delta} ) = \TV(\mb P_{-\Delta}, (1-\alpha)\mb P_{-\Delta}+\alpha \mb P_{\Delta}) \leq (1-\alpha)\TV(\mb P_{-\Delta}, \mb P_{-\Delta})  +\alpha \TV(\mb P_{-\Delta}, \mb P_{\Delta}) \leq \alpha$ and symmetrically $\TV(\mb P_{\Delta}, \mb Q^+_{\Delta} )\leq \alpha$. Observe now that
\begin{align*}
  \lim_{\Delta\to\infty} \KL((\mb P_{\Delta} + \mb P_{-\Delta})/2, \mb Q^-_{\Delta}) = & \lim_{\Delta\to\infty} \KL((\mb P_{\Delta} + \mb P_{-\Delta})/2, (1-\alpha) \mb P_{-\Delta} + \alpha \mb P_{\Delta})\\
  = & \frac 12 \log\left(\frac{1}{2\alpha}\right) + \frac{1}{2} \log\left(\frac{1}{2(1-\alpha)}\right) = \bar r(\alpha).
\end{align*}

Hence, \cref{theorem:statistical-resolution-function} implies the following result.

\begin{proposition}
    We have $\KLrad{r}{\alpha} = \infty$ for any $r\geq\bar r(\alpha) = \frac 12 \log\left(\tfrac{1}{(2\alpha)}\right) + \frac{1}{2} \log\left(\tfrac{1}{(2(1-\alpha))}\right)$.
\end{proposition}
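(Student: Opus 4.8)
The plan is to verify that the explicit triple $(\hat{\mb P}, \mb Q^-_\Delta, \mb Q^+_\Delta)$ displayed just above is feasible in problem \eqref{eq:statistical-resolution-function-optimization} with objective strictly below $\bar r(\alpha)$ for every finite $\Delta$, and then to close the argument through \cref{theorem:statistical-resolution-function}. The total variation constraints and the limiting value of the objective have already been recorded, and by the $\xi\mapsto -\xi$ symmetry of the construction the two divergences $\KL(\hat{\mb P},\mb Q^-_\Delta)$ and $\KL(\hat{\mb P},\mb Q^+_\Delta)$ coincide, so $\r^\alpha(\Delta)\le \KL(\hat{\mb P},\mb Q^-_\Delta)$. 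What remains is the quantitative claim $\KL(\hat{\mb P}, \mb Q^-_\Delta) < \bar r(\alpha)$ for all $\Delta \geq 0$, which upgrades the limit into a uniform strict bound $\r^\alpha(\Delta) < \bar r(\alpha)$ and is exactly what is needed to treat the boundary resolution $r = \bar r(\alpha)$.

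To obtain this bound I would first rewrite the divergence as a single expectation. Writing $u(\xi) = g(\xi - \Delta)$ and $v(\xi) = g(\xi + \Delta)$ for the densities of $\mb P_\Delta$ and $\mb P_{-\Delta}$, the densities of $\hat{\mb P}$ and $\mb Q^-_\Delta$ are $\tfrac12(u+v)$ and $\alpha u + (1-\alpha)v$. Factoring $u + v$ out of both numerator and denominator and introducing $t(\xi) = u(\xi)/(u(\xi)+v(\xi)) \in [0,1]$, the log-ratio collapses to $-\log 2 - \log s(t)$ with $s(t) \defn \alpha t + (1-\alpha)(1-t)$. Since $\int \tfrac12(u+v)\, \d\xi = 1$, this yields the identity
\[
  \KL(\hat{\mb P}, \mb Q^-_\Delta) = -\log 2 - \int \tfrac12\big(u(\xi)+v(\xi)\big)\log s\big(t(\xi)\big)\,\d\xi,
\]
and because $\bar r(\alpha) = -\log 2 - \tfrac12\log\big(\alpha(1-\alpha)\big)$, the desired inequality is equivalent to $\int \tfrac12(u+v)\log s(t)\,\d\xi > \tfrac12\log\big(\alpha(1-\alpha)\big)$.

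The crux is a symmetrization step. Because $g$ is symmetric, the map $\xi \mapsto -\xi$ interchanges $u$ and $v$, hence leaves the measure $\tfrac12(u+v)\,\d\xi$ invariant while sending $t \mapsto 1-t$; consequently $t$ is distributed symmetrically about $\tfrac12$ under this measure. Averaging the integrand against its reflection therefore gives $\int \tfrac12(u+v)\log s(t)\,\d\xi = \tfrac12\int\tfrac12(u+v)\log\big(s(t)\,s(1-t)\big)\,\d\xi$. Now $s(t)\,s(1-t)$ is a downward parabola in $t$ taking the value $\alpha(1-\alpha)$ at $t \in \{0,1\}$ and $\tfrac14$ at $t=\tfrac12$, so $s(t)s(1-t) \geq \alpha(1-\alpha)$ on $[0,1]$ with equality only at the endpoints. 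Since for finite $\Delta$ the densities $u$ and $v$ overlap on a set of positive measure, $t$ is not supported solely on $\{0,1\}$, so the integrated inequality is strict. This establishes $\r^\alpha(\Delta) \leq \KL(\hat{\mb P}, \mb Q^-_\Delta) < \bar r(\alpha)$ for every $\Delta$.

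Finally, I would conclude with \cref{theorem:statistical-resolution-function}: for any $r \geq \bar r(\alpha)$ we have $\r^\alpha(\Delta) < \bar r(\alpha) \leq r$ for all $\Delta \geq 0$, so $\{|\Delta| : \r^\alpha(\Delta) \leq r\} = [0,\infty)$ and hence $\KLrad{r}{\alpha} = \sup\{|\Delta| : \r^\alpha(\Delta)\leq r\} = \infty$. I expect the main obstacle to be precisely this strict, uniform-in-$\Delta$ bound rather than the mere limit: the limit alone handles $r > \bar r(\alpha)$ by continuity, but the critical resolution $r = \bar r(\alpha)$ requires $\KL(\hat{\mb P},\mb Q^-_\Delta)$ to stay strictly below $\bar r(\alpha)$ for arbitrarily large $\Delta$, which the symmetrization-plus-parabola argument supplies without having to establish monotonicity of $\Delta \mapsto \KL(\hat{\mb P},\mb Q^-_\Delta)$.
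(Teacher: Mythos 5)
Your proposal is correct and follows the same route as the paper: the identical triple $\hat{\mb P}=(\mb P_{\Delta}+\mb P_{-\Delta})/2$, $\mb Q^{\pm}_{\Delta}$ as $\alpha$-mixtures, the same total variation verification, and the same conclusion via \cref{theorem:statistical-resolution-function}. The difference is one of rigor rather than strategy: the paper only displays the limit $\lim_{\Delta\to\infty}\KL(\hat{\mb P},\mb Q^-_\Delta)=\bar r(\alpha)$ and asserts without proof that the objective is below $\bar r(\alpha)$ at every finite $\Delta$, whereas you actually establish the uniform bound. Your symmetrization is clean and checks out: writing $t=u/(u+v)$, one gets $s(t)s(1-t)=\alpha(1-\alpha)+(1-2\alpha)^2\,t(1-t)\geq \alpha(1-\alpha)$, and since $g>0$ everywhere, $t\in(0,1)$ on a set of full measure, giving strictness for $\alpha\neq\tfrac12$. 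So your argument supplies exactly the step the paper glosses over. Two small corrections. First, at $\alpha=\tfrac12$ the parabola degenerates ($s(t)s(1-t)\equiv\tfrac14$) and your claim of equality ``only at the endpoints'' fails; there, however, $\mb Q^-_\Delta=\hat{\mb P}$, so $\KL(\hat{\mb P},\mb Q^-_\Delta)=0=\bar r(\tfrac12)$ and the proposition still holds. Second, strictness is not actually needed even at the boundary resolution $r=\bar r(\alpha)$: since \cref{theorem:statistical-resolution-function} takes the supremum over the sublevel set $\{\abs{\Delta}:\r^{\alpha}(\Delta)\leq r\}$, the non-strict bound $\r^{\alpha}(\Delta)\leq\bar r(\alpha)\leq r$ already makes that set all of $[0,\infty)$ — so your emphasis that the strict inequality is ``exactly what is needed'' at $r=\bar r(\alpha)$ is a slight over-claim, though the uniform-in-$\Delta$ bound (strict or not) is indeed the substantive content missing from the paper's terse write-up.
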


Remark in particular that the previous observation implies $\KLrad{0}{\tfrac 12} = \infty$. 
This is not surprising, if more than half of the distribution mass is corrupted, nothing much can be said from data.
We have illustrated this observation with the help of a red dotted line feasibility line in Figure \ref{fig:diagram}.

\section{Almost Sure Minimality and Phase Transitions}
\label{sec:eventual-efficiency}

We would like here to reinterpret the efficiency notions we have encountered in Section \ref{sec:parametric-estimation} and Section \ref{sec:location-estimation} in terms of the asymptotic behavior of our proposed confidence set predictors. Let us consider hence the event in which $\mb P_n\to\mb P_\infty$.
Remark first that for any regular set estimator $S$ it follows from its continuity condition that we have
\(
\lim_{n\to\infty} S(\mb P_n) = S(\mb P_\infty).
\)
That is, in a large sample limit the predicted confidence sets $S(\mb P_n)$ for $n\geq 1$ tend to the confidence set $S(\mb P_\infty)$ associated with the distribution $\mb P_\infty$. 
Recall that although the DRO minimal confidence set estimator $\SpKL{r}{\alpha}$ itself does not satisfy the imposed statistical guarantee (\ref{eq:feasibility:parametric}), we proposed a particular $0<\delta$-inflation $\SpKL{r}{\alpha}^\delta$ confidence set estimator which does. We are interested in the large sample behavior of these confidence set estimators for $\delta\downarrow 0$ which is characterized by the following result.

\begin{lemma}
  \label{lemma:approximation:A} 
  The event $\mb P_n\to\mb P_\infty$ implies
  $\lim_{\delta\downarrow 0} \lim_{n\to\infty} \SpKL{r}{\alpha}^\delta(\mb P_n)= \SpKL{r}{\alpha}(\mb P_\infty)$.
\end{lemma}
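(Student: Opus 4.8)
The plan is to control $\SpKL{r}{\alpha}^\delta(\mb P_n)$ by sandwiching it between two inflations of the estimator evaluated at the limit distribution $\mb P_\infty$, and then to collapse this sandwich with \cref{thm:approximation:B}. The only consequence of the event $\mb P_n\to\mb P_\infty$ I would use is that $\LP(\mb P_n,\mb P_\infty)\to 0$, together with the triangle inequality for the L\'evy--Prokhorov metric.

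\textbf{A sandwich via the triangle inequality.} Fix $\delta>0$ and $\epsilon\in(0,\delta)$, and consider $n$ large enough that $\LP(\mb P_n,\mb P_\infty)\leq\epsilon$. The triangle inequality yields the nested ball inclusions $\set{\mb P'}{\LP(\mb P_\infty,\mb P')\leq\delta-\epsilon}\subseteq\set{\mb P'}{\LP(\mb P_n,\mb P')\leq\delta}\subseteq\set{\mb P'}{\LP(\mb P_\infty,\mb P')\leq\delta+\epsilon}$. Taking the infimum of $\KL(\cdot,\mb Q)$ over these sets and recalling the definition of the smoothed divergence gives, for every $\mb Q\in\mc P$,
\[
  \KL^{\delta+\epsilon}(\mb P_\infty,\mb Q)\leq\KL^{\delta}(\mb P_n,\mb Q)\leq\KL^{\delta-\epsilon}(\mb P_\infty,\mb Q).
\]
Since a smaller smoothed divergence only makes the defining constraint $\KL^\delta(\cdot,\mb Q)\leq r$ easier to meet, any $\mb Q$ witnessing membership of $\theta$ in $\SpKL{r}{\alpha}^{\delta-\epsilon}(\mb P_\infty)$ also witnesses $\theta\in\SpKL{r}{\alpha}^{\delta}(\mb P_n)$, and any witness for $\SpKL{r}{\alpha}^{\delta}(\mb P_n)$ is a witness for $\SpKL{r}{\alpha}^{\delta+\epsilon}(\mb P_\infty)$. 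Hence, for all such $n$,
\[
  \SpKL{r}{\alpha}^{\delta-\epsilon}(\mb P_\infty)\subseteq\SpKL{r}{\alpha}^{\delta}(\mb P_n)\subseteq\SpKL{r}{\alpha}^{\delta+\epsilon}(\mb P_\infty).
\]

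\textbf{Passing to the limits.} Because this sandwich holds for all sufficiently large $n$ with bounds independent of $n$, taking Kuratowski inner and outer set limits over $n$ preserves the inclusions:
\[
  \SpKL{r}{\alpha}^{\delta-\epsilon}(\mb P_\infty)\subseteq\liminf_{n\to\infty}\SpKL{r}{\alpha}^{\delta}(\mb P_n)\subseteq\limsup_{n\to\infty}\SpKL{r}{\alpha}^{\delta}(\mb P_n)\subseteq\SpKL{r}{\alpha}^{\delta+\epsilon}(\mb P_\infty).
\]
Specializing to $\epsilon=\delta/2$ and using the monotonicity $\SpKL{r}{\alpha}\subseteq\SpKL{r}{\alpha}^{\delta'}$ for every $\delta'>0$, I obtain for each $\delta>0$ that $\SpKL{r}{\alpha}(\mb P_\infty)\subseteq\SpKL{r}{\alpha}^{\delta/2}(\mb P_\infty)\subseteq\liminf_n\SpKL{r}{\alpha}^{\delta}(\mb P_n)$ and $\limsup_n\SpKL{r}{\alpha}^{\delta}(\mb P_n)\subseteq\SpKL{r}{\alpha}^{3\delta/2}(\mb P_\infty)$. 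Intersecting over all $\delta>0$ and invoking \cref{thm:approximation:B}, which gives $\cap_{\delta>0}\SpKL{r}{\alpha}^{\delta}(\mb P_\infty)=\SpKL{r}{\alpha}(\mb P_\infty)$, squeezes both the inner $\liminf$ and $\limsup$ so that the nested $\delta$-intersection of either equals $\SpKL{r}{\alpha}(\mb P_\infty)$, establishing the claim.

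\textbf{The main obstacle.} The delicate point is that the inner limit $\lim_{n\to\infty}\SpKL{r}{\alpha}^{\delta}(\mb P_n)$ need not exist for a \emph{fixed} $\delta$: the sandwich closes only up to a possible jump of the monotone family $\delta\mapsto\SpKL{r}{\alpha}^{\delta}(\mb P_\infty)$, so $\liminf_n$ and $\limsup_n$ may differ there. I would resolve this by observing that the outer operation $\delta\downarrow 0$ (interpreted, as in \cref{thm:approximation:B}, as the intersection over $\delta>0$) annihilates this gap: the argument above traps both the inner $\liminf$ and $\limsup$ between the same two sets once the $\delta$-intersection is taken, so the ambiguity in the inner limit is immaterial to the double limit. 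Equivalently, one could restrict $\delta$ to the co-countable set of continuity points of the monotone family, where the inner limit genuinely exists.
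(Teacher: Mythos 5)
Your proof is correct and follows essentially the same route as the paper: a L\'evy--Prokhorov triangle-inequality sandwich trapping $\SpKL{r}{\alpha}^{\delta}(\mb P_n)$ between smoothed sets evaluated at $\mb P_\infty$ (the paper uses $\SpKL{r}{\alpha}(\mb P_\infty)\subseteq \SpKL{r}{\alpha}^{\delta}(\mb P_n)\subseteq\SpKL{r}{\alpha}^{2\delta}(\mb P_\infty)$ where you use the symmetric $\delta\pm\epsilon$ version), collapsed via \cref{thm:approximation:B} as $\delta\downarrow 0$. Your explicit treatment of the inner limit through Kuratowski $\liminf$/$\limsup$ is in fact slightly more careful than the paper, which writes $\lim_{n\to\infty}$ without addressing its existence, but this is a refinement of the same argument rather than a different approach.
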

\begin{appendixproof}[Proof of Theorem \ref{lemma:approximation:A}]
  By \cite{varadarajan1958convergence} we have that $\lim_{n\to\infty}\LP(\mb P_n, \mb P^c)=0$  almost surely. There exists a random time $n_1$ so that $\LP(\mb P_n, \mb P^c)\leq \delta$ for $n\geq n_1$ almost surely.
  We have from the triangle inequality $\LP(\mb P'', \mb P_n)+\LP(\mb P_n, \mb P^c)\geq \LP(\mb P'', \mb P^c)$ that
  \begin{align*}
    & \SpKL{r}{\alpha}^\delta(\mathbb{P}_n)\\
    = &\set{\theta\in \Theta}{\exists \mb Q\in\mc P, ~\mb P''\in \mc P ~{\rm{st}}~\LP(\mb P_n, \mb P'')\leq \delta, ~\KL(\mb P'', \mb Q)\leq r, ~\TV(\mb Q, \mb P_\theta)\leq \alpha}\\
    \subseteq & \set{\theta\in \Theta}{\exists \mb Q\in\mc P,~\mb P''\in \mc P ~{\rm{st}}~\LP(\mb P^c, \mb P'')\leq \delta + \LP(\mb P_n, \mb P^c), ~\KL(\mb P'', \mb Q)\leq r, ~\TV(\mb Q, \mb P_\theta)\leq \alpha}\\
    = & \SpKL{r}{\alpha}^{\delta+\LP(\mb P_n, \mb P^c)}(\mb P^c) \subseteq \SpKL{r}{\alpha}^{2\delta}(\mb P^c)
  \end{align*}
  for $n\geq n_1$. Furthermore, we also have that when $\LP(\mb P_n, \mb P^c)\leq \delta$ clearly we get  $\SpKL{r}{\alpha}(\mathbb{P}^c) \subseteq \SpKL{r}{\alpha}^\delta(\mathbb{P}_n)$. Hence, we have almost surely the sandwich inequality
  \[
    \SpKL{r}{\alpha}(\mathbb{P}^c) \subseteq \lim_{n\to\infty} \SpKL{r}{\alpha}^\delta(\mb P_n)\leq \SpKL{r}{\alpha}^{2\delta}(\mb P^c).
  \]
  The statement now follows from \cref{thm:approximation:B} by taking the limit $\delta\downarrow 0$.
\end{appendixproof}

As an immediate consequence of the uniform minimality result in \cref{thm: KLefficiency:noise} we have that for any regular estimator $S$ which satisfies our imposed guarantee \eqref{eq:feasibility:parametric} that
\begin{equation}
  \label{eq:eventual-efficiency}
  \lim_{\delta\downarrow 0} \lim_{n\to\infty} \SpKL{r}{\alpha}^\delta(\mb P_n) = \SpKL{r}{\alpha}(\mb P_\infty)  \subseteq \lim_{n\to\infty} S(\mb P_n) = S(\mb P_\infty) \quad \forall \mb P_\infty\in \mc P.
\end{equation}
Informally, by choosing the inflation $\delta>0$ sufficiently small, the feasible confidence estimator $\SpKL{r}{\alpha}^\delta$ eventually fences in, for large sample sizes, the unknown distribution within a smaller confidence region than any other regular estimator. Inequality \eqref{eq:eventual-efficiency} simply reinterprets why the proposed minimal estimator is to be preferred from the perspective of the asymptotic region within which its fences in the unknown location parameter.

We have shown earlier that the Huber confidence set estimator $S^{\huber}_\Delta$ of radius $\Delta>\KLrad{r}{\alpha}$ around the Huber estimator also does enjoy the statistical guarantee (\ref{eq:feasibility:parametric}).
At the same time, for any $\Delta'<\KLrad{r}{\alpha}$ we discussed an explicit construction of a distribution $\hat{\mb{P}}^\star$ so that $\Delta'\leq \radius(\SpKL{r}{\alpha}(\hat{\mb{P}}^\star))$. In a worst-case scenario when $\mb P_n\to \hat{\mb{P}}^\star$ occurs, we get
\[
\radius(S^{\huber}_\Delta({\mb P}_n )) = \Delta > \KLrad{r}{\alpha} \geq \radius(S_{r, \alpha}(\hat{\mb P}^\star)) = \radius(\textstyle\lim_{\delta\downarrow 0} \lim_{n\to\infty}S^\delta_{r, \alpha}(\mb P^n)) \geq \Delta'.
\]
Hence, as we may let $\Delta\downarrow \KLrad{r}{\alpha}$ and $\Delta'\uparrow\KLrad{r}{\alpha}$, the worst-case efficiency of the Huber estimator can be reinterpreted from a perspective of enjoying a worst-case asymptotic region (in terms of radius) within which its fences in the unknown location parameter.

A decision-maker may however reasonably decide that worrying about a worst-case event occurring is excessive.
Define indeed here $\mc P_\Theta^\alpha\defn \set{\mb P^c\in \mc P}{\exists \theta\in\Theta:~\TV(\mb P^c, \mb P_\theta)\leq \alpha}$ as the set of all potential corrupted distributions.
Let us consider a case in which $\hat{\mb{P}}^\star \not\in \mc P_\Theta^\alpha$ and hence the law of large numbers \citep{varadarajan1958convergence} guarantees that almost surely the worst-case scenario $\mb P_n\to \hat{\mb{P}}^\star$ does in fact not occur.
The previous remark puts into question the significance of the worst-case minimality of the Huber confidence estimator as the worst-case scenarios to which it lends its minimality may not occur almost surely. Hence, when interested in almost-sure rather than worst-case performance one may simply exclude the case $\hat{\mb{P}}^\star \not\in \mc P_\Theta^\alpha$ and observe indeed that
\begin{equation}\label{eq: KL radius prime}
  \lim_{\delta\downarrow 0} \lim_{n\to\infty} \radius(\SpKL{r}{\alpha}^\delta(\mb P_n))= \SpKL{r}{\alpha}(\mb P^c) \leq \KLradp{r}{\alpha} \defn \textstyle\sup_{\hat{\mb P} \in \mc P_\Theta^\alpha}\radius(\SpKL{r}{\alpha}(\hat{\mb P}))
\end{equation}
as almost surely $\mb P_n\to \mb P^c$ where we recall that $\mb P^c$ denotes the corrupted distribution from which the training data points are sampled.
The radius $\KLradp{r}{\alpha}$ can be thought here of as the almost sure counterpart to the worst-case radius $\KLrad{r}{\alpha}$ as it upper bounds the almost sure rather than the worst-case radius of the confidence sets returned by our minimal confidence set estimator in the large sample limit.
It is trivial to observe that by definition we have the inequality $\KLradp{r}{\alpha} \leq \KLrad{r}{\alpha}$. By the end of this section we will distinguish several regimes based on whether the almost-sure and worst-case radii take on finite values and in particular whether they are equal.

However, before we do so it is once more of interest to understand to first introduce the statistical resolution function
\begin{equation}
  \label{eq:statistical-resolution-function-optimization-3}
  \begin{array}{rl}
    \rp^{\alpha}(\Delta)\defn \min & r\\
    \st & r\geq 0, ~\theta\in[-\Delta, \Delta],~\hat{\mb P}\in \mc P, ~\mb Q^+_{\Delta} \in \mc P, ~\mb Q^-_{\Delta} \in \mc P,\\
                                         & \KL(\hat{\mb P}, \mb Q^+_{\Delta})\leq r, ~\KL(\hat{\mb P}, \mb Q^-_{\Delta})\leq r,\\
                                         & \TV(\mb Q^+_{\Delta}, \mb P_\Delta)\leq \alpha, ~\TV(\hat{\mb P}, \mb P_{\theta})\leq \alpha,~\TV(\mb Q^-_{\Delta}, \mb P_{-\Delta})\leq \alpha.
  \end{array}
\end{equation}

The next theorem establishes that the introduced statistical resolution function characterizes the quantity $\KLradp{r}{\alpha}$ as its sublevel sets and should be thought of as the almost sure counterpart to the statistical resolution function introduced in Equation (\ref{eq:statistical-resolution-function-optimization}).

\begin{theorem}
  \label{theorem:statistical-resolution-function-3}
  We have
  \(
  \KLradp{r}{\alpha} = \sup\set{\abs{\Delta}}{\rp^{\alpha}(\Delta)\leq r}.
  \)
\end{theorem}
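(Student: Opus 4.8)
The plan is to mirror the proof of Theorem~\ref{theorem:statistical-resolution-function} almost verbatim, the only difference being the extra constraint $\TV(\hat{\mb P}, \mb P_\theta)\leq \alpha$ in the definition of $\rp^\alpha$ versus $\r^\alpha$, which corresponds exactly to the additional requirement $\hat{\mb P}\in \mc P_\Theta^\alpha$ in the supremum defining $\KLradp{r}{\alpha}$ versus $\KLrad{r}{\alpha}$. I would prove the two inequalities separately, establishing set equality of the sublevel quantities.

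\textbf{Forward inequality $\sup\set{\abs{\Delta}}{\rp^{\alpha}(\Delta)\leq r} \leq \KLradp{r}{\alpha}$.} Take any $\Delta'<\sup\set{\abs{\Delta}}{\rp^{\alpha}(\Delta)\leq r}$, so there is some $\Delta\geq \Delta'$ with $\rp^\alpha(\Delta)\leq r$. Unpacking the optimization \eqref{eq:statistical-resolution-function-optimization-3}, this yields a witness $\hat{\mb P}$ together with $\mb Q^\pm_\Delta$ certifying that both $-\Delta,\Delta\in \SpKL{r}{\alpha}(\hat{\mb P})$, and crucially the extra constraint supplies a $\theta\in[-\Delta,\Delta]$ with $\TV(\hat{\mb P},\mb P_\theta)\leq\alpha$, hence $\hat{\mb P}\in \mc P_\Theta^\alpha$. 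Therefore $\radius(\SpKL{r}{\alpha}(\hat{\mb P}))\geq \Delta\geq\Delta'$, and since $\hat{\mb P}$ lies in the restricted set, $\Delta'\leq \KLradp{r}{\alpha}$; letting $\Delta'$ approach the supremum closes this direction.

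\textbf{Reverse inequality $\KLradp{r}{\alpha}\leq \sup\set{\abs{\Delta}}{\rp^{\alpha}(\Delta)\leq r}$.} Take $\Delta<\KLradp{r}{\alpha}$, so there is some $\hat{\mb P}\in \mc P_\Theta^\alpha$ with $\radius(\SpKL{r}{\alpha}(\hat{\mb P}))\geq\Delta$, giving points $a,b\in \SpKL{r}{\alpha}(\hat{\mb P})$ with $b-a\geq\Delta$ and corresponding corrupted distributions $\mb Q_a,\mb Q_b$. The key step, as in the earlier proof, is to recenter by the translation invariance of $\mc P_\Theta$: define $\hat{\mb P}'(B)=\hat{\mb P}(B-(a+b)/2)$, which shifts $a,b$ symmetrically to $-\Delta,\Delta$ so that $\{-\Delta,\Delta\}\subseteq \SpKL{r}{\alpha}(\hat{\mb P}')$. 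The main thing to verify beyond the earlier argument is that the extra constraint remains satisfiable after the shift: since $\hat{\mb P}\in \mc P_\Theta^\alpha$ there exists $\theta_0$ with $\TV(\hat{\mb P},\mb P_{\theta_0})\leq\alpha$, and by translation invariance $\TV(\hat{\mb P}',\mb P_{\theta_0-(a+b)/2})\leq\alpha$; because $a,b$ both lie in the confidence set and the family is symmetric, the recentered parameter falls in $[-\Delta,\Delta]$, so $\hat{\mb P}'$ is feasible in \eqref{eq:statistical-resolution-function-optimization-3}. This certifies $\rp^\alpha(\Delta)\leq r$, hence $\Delta\leq \sup\set{\abs{\Delta}}{\rp^{\alpha}(\Delta)\leq r}$; letting $\Delta\uparrow \KLradp{r}{\alpha}$ concludes.

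\textbf{Main obstacle.} The only genuinely new bookkeeping relative to Theorem~\ref{theorem:statistical-resolution-function} is confirming that the recentering by $(a+b)/2$ keeps the shifted reference parameter $\theta_0-(a+b)/2$ inside the interval $[-\Delta,\Delta]$ so that the added total variation constraint in \eqref{eq:statistical-resolution-function-optimization-3} is met; here one uses the symmetry of the generating density $g$ together with the fact that the corruption witness $\theta_0$ must itself be compatible with a confidence set centered near $a$ and $b$. Everything else is a direct transcription of the two-sided inclusion argument already carried out for $\KLrad{r}{\alpha}$.
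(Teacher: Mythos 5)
Your forward inequality and the overall two-sided recentering strategy match the paper's proof of \cref{theorem:statistical-resolution-function-3}, which indeed transcribes \cref{theorem:statistical-resolution-function} with the extra total variation constraint tracked through. The problem is in your reverse inequality, at precisely the step you flag as the main obstacle, where your justification would fail. You first extract generic points $a,b\in \SpKL{r}{\alpha}(\hat{\mb P})$ with $b-a\geq\Delta$ and only afterwards try to place the corruption witness $\theta_0$ (with $\TV(\hat{\mb P},\mb P_{\theta_0})\leq\alpha$) inside $[a,b]$, asserting that ``because $a,b$ both lie in the confidence set and the family is symmetric, the recentered parameter falls in $[-\Delta,\Delta]$.'' This does not follow. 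Nothing forces $\theta_0\in[a,b]$ for an arbitrary choice of $a,b$: the set $\SpKL{r}{\alpha}(\hat{\mb P})$ can be nonconvex (\cref{ex:normal-family} exhibits a union of two disjoint intervals), so $\theta_0$ may lie outside the segment $[a,b]$ entirely, in which case $\theta_0-(a+b)/2\notin[-\Delta,\Delta]$ and the constraint $\theta\in[-\Delta,\Delta]$, $\TV(\hat{\mb P}',\mb P_\theta)\leq\alpha$ in \eqref{eq:statistical-resolution-function-optimization-3} is violated for your candidate witness. The appeal to symmetry of the generating density $g$ is a red herring: neither $\hat{\mb P}$ nor its confidence set inherits any symmetry, and $g$ places no constraint on where $\theta_0$ sits relative to $a$ and $b$.

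The paper closes this hole by reversing the order of quantifiers. It first fixes $\theta$ with $\TV(\hat{\mb P},\mb P_\theta)\leq\alpha$, observes that this gives $\theta\in \SpKL{0}{\alpha}(\hat{\mb P})\subseteq \SpKL{r}{\alpha}(\hat{\mb P})$, and only then chooses $a,b\in \SpKL{r}{\alpha}(\hat{\mb P})$ \emph{straddling} it, i.e., $a\leq\theta\leq b$ with $b-a\geq\Delta$; this is always possible because $\theta$ is itself a member of the confidence set, so one of the two points can be taken equal to $\theta$ and the other a point of the set far from it, whose existence the radius bound $\radius(\SpKL{r}{\alpha}(\hat{\mb P}))\geq\Delta$ guarantees. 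With that ordering, the recentered parameter $\theta-(a+b)/2$ automatically lies in $[-(b-a)/2,(b-a)/2]$, the symmetric interval whose endpoints are the recentered copies of $a$ and $b$, which is exactly what feasibility in \eqref{eq:statistical-resolution-function-optimization-3} requires; no symmetry argument is needed. Your proof becomes correct, and coincides with the paper's, once you make this swap---but as written the key step is a genuine gap, not mere bookkeeping.
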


\begin{appendixproof}[Proof of \cref{theorem:statistical-resolution-function-3}]

  We first show that $\sup\set{\abs{\Delta}}{\rp^{\alpha}(\Delta)\leq r} \leq \sup \tset{ \radius(\SpKL{r}{\alpha}(\hat {\mb P}))}{\hat {\mb P}\in \mc P_\Theta^\alpha}$. Take indeed $\Delta'<\sup\set{\abs{\Delta}}{\rp^{\alpha}(\Delta)\leq r}$. That is, we have that $\rp^{\alpha}(\Delta)\leq r$ for some $\Delta\geq \Delta'$. Hence, by definition of $\r^{\alpha}(\Delta)$
  there exists a $\hat {\mb P}\in \mc P$ and $\theta\in [-\Delta, \Delta]$ so that $\KL(\hat {\mb P}, \mb Q^-_{\Delta})\leq r$ and $\KL(\hat {\mb P}, \mb Q^+_{\Delta})\leq r$ with $\TV(\mb Q^-_{\Delta}, \mb P_{-\Delta})\leq \alpha$, $\TV(\mb Q^+_{\Delta}, \mb P_{\Delta})\leq \alpha$ and $\TV(\hat{\mb P}, \mb P_{\theta})\leq \alpha$. Hence, it follows that $\{-\Delta ,\theta, \Delta \}\subseteq  \SpKL{r}{\alpha}(\hat {\mb P})$  and consequently $\Delta'\leq \Delta \leq \radius(\SpKL{r}{\alpha}(\hat {\mb P}))$. As this applies for any $\Delta'<\sup\set{\abs{\Delta}}{\rp^{\alpha }(\Delta)\leq r}$ we have that $\sup\set{\abs{\Delta}}{\rp^{\alpha}(\Delta)\leq r} \leq \sup \tset{ \radius(\SpKL{r}{\alpha}(\hat {\mb P}))}{\theta\in \Theta, ~\hat {\mb P}\in \mc P, ~\TV(\hat {\mb P}, \mb P_\theta)\leq \alpha}$.

We now show that $\sup \tset{ \radius(\SpKL{r}{\alpha}(\hat {\mb P}))}{\hat {\mb P}\in \mc P_\Theta^\alpha} \leq \sup\set{\abs{\Delta}}{\rp^{\alpha}(\Delta)\leq r}$. Consider here $\Delta<\sup \tset{ \radius(\SpKL{r}{\alpha}(\hat {\mb P}))}{\hat {\mb P}\in \mc P_\Theta^\alpha}$.
There exists now $\hat {\mb P}\in \mc P_\Theta^\alpha$ so that so that $\radius(\SpKL{r}{\alpha}(\hat {\mb P}))\geq \Delta$.
By definition of the set $\mc P_\Theta^\alpha$ there also exists $\theta\in\Theta$ so that $\TV(\hat {\mb P}, \mb P_\theta)\leq \alpha$ and consequently $\theta\in \SpKL{0}{\alpha}(\hat {\mb P})\subseteq \SpKL{r}{\alpha}(\hat {\mb P})$.
We can hence find two points $a\in \SpKL{r}{\alpha}(\hat {\mb P})$ and $b\in \SpKL{r}{\alpha}(\hat {\mb P})$ so that $a\leq \theta$, $b\geq \theta$ and $b-a\geq \Delta$.
As $\mc P_\Theta$ is a translation invariant family we have that $\hat {\mb P}'\in \mc P$ defined as $\hat {\mb P}'(B) = \hat {\mb P}(B-(a+b)/2)$ for all $B\subseteq\Xi$ satisfies $\{-\Delta, \Delta\} \in \SpKL{r}{\alpha}(\hat {\mb P}') $ and $\TV(\hat {\mb P}, \mb P_{\theta'})\leq \alpha$ for $\theta'\in [-\Delta, \Delta]$. Consequently, by definition of the KL-TV confidence set we can find $\mb Q^-_{\Delta}$ with $\TV(\mb Q^-_{\Delta}, \mb P_{-\Delta})\leq \alpha$ and $\mb Q^+_{\Delta}$ with $\TV(\mb Q^+_{\Delta}, \mb P_{\Delta})\leq \alpha$ so that $\KL(\hat{\mb P}', \mb Q^-_{\Delta})\leq r$ and $\KL(\hat{\mb P}', \mb Q^+_{\Delta})\leq r$ with $\TV(\hat {\mb P}, \mb P_{\theta'})\leq \alpha$ for $\theta'\in [-\Delta, \Delta]$.
   Hence, $\rp^{\alpha}(\Delta)\leq r$ and consequently $\sup \tset{ \radius(\SpKL{r}{\alpha}(\hat {\mb P}))}{\hat {\mb P}\in \mc P_\Theta^\alpha} \leq \sup\set{\abs{\Delta}}{\rp^{\alpha}(\Delta)\leq r}$.
\end{appendixproof}

In what follows we compare the worst-case efficiency notion discussed in Section \ref{sec:location-estimation} with the almost sure minimality notion discussed here and identify three different regimes; see also Figure \ref{fig:diagram}.

\begin{sidewaysfigure}
  \centering
  \begin{subfigure}[b]{0.65\textwidth}
    \centering
      \includegraphics[height=2.5cm]{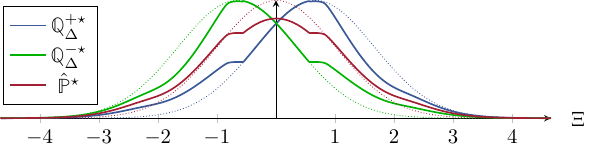}
    \caption{Worst-case distributions $\KLradp{r}{\alpha}=\KLrad{r}{\alpha}$}
    \label{fig:ex:eventual-distributions-i}
  \end{subfigure}%
  \hfill
  \begin{subfigure}[b]{0.35\textwidth}
    \centering
      \includegraphics[height=5cm]{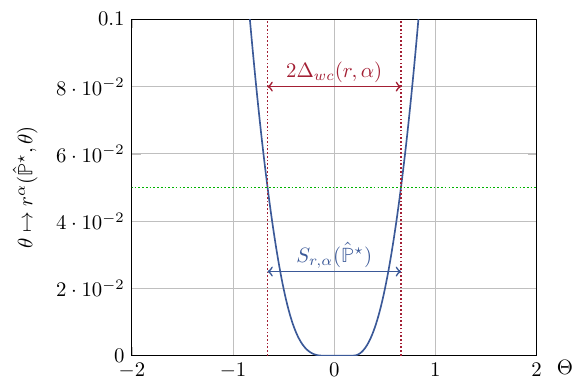}
    \caption{Resolution function $\KLradp{r}{\alpha}=\KLrad{r}{\alpha}$}
    \label{fig:ex:eventual-residual-i}
  \end{subfigure}%
  \\
  \begin{subfigure}[b]{0.65\textwidth}
    \centering
      \includegraphics[height=2.5cm]{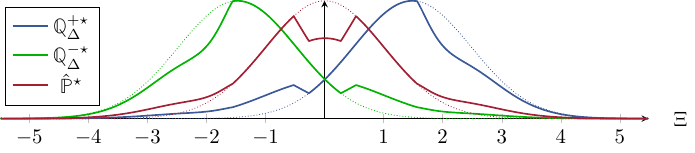}
    \caption{Worst-case distributions $\KLradp{r}{\alpha}<\KLrad{r}{\alpha}<\infty$}
    \label{fig:ex:eventual-distributions-ii}
  \end{subfigure}%
  \hfill
  \begin{subfigure}[b]{0.35\textwidth}
    \centering
      \includegraphics[height=5cm]{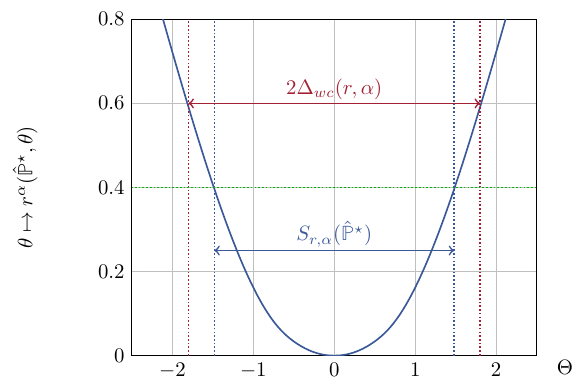}
    \caption{Resolution function $\KLradp{r}{\alpha}<\KLrad{r}{\alpha}<\infty$}
    \label{fig:ex:eventual-residual-ii}
  \end{subfigure}%
  \\
  \begin{subfigure}[b]{0.65\textwidth}
    \centering
      \includegraphics[height=2.5cm]{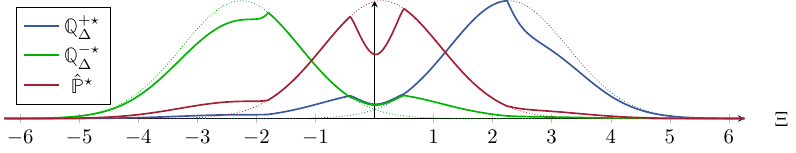}
    \caption{Worst-case distributions $\KLradp{r}{\alpha}<\KLrad{r}{\alpha}=\infty$}
    \label{fig:ex:eventual-distributions-iii}
  \end{subfigure}%
  \hfill
  \begin{subfigure}[b]{0.35\textwidth}
    \centering
      \includegraphics[height=5cm]{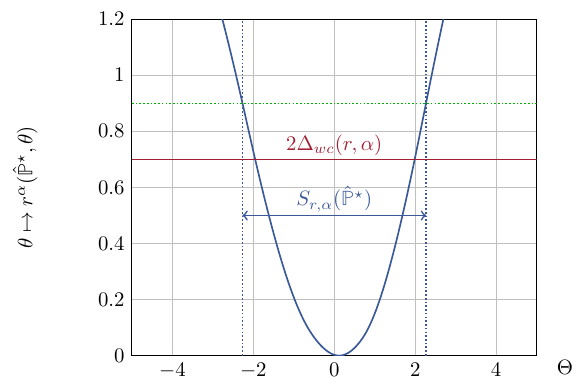}
    \caption{Resolution function $\KLradp{r}{\alpha}<\KLrad{r}{\alpha}=\infty$}
    \label{fig:ex:eventual-residual-iii}
  \end{subfigure}%
  \caption{Illustration of the worst-case distributions in the three different almost sure efficiency regimes.}
\end{sidewaysfigure}

\subsection{Regime $\KLradp{r}{\alpha}=\KLrad{r}{\alpha}<\infty$}

Perhaps the simplest situation is the regime in which and both radii are equal and take on a finite value. As before the Huber confidence set estimator of radius $\Delta>\KLrad{r}{\alpha}$ around the Huber estimator enjoys the statistical guarantee (\ref{eq:feasibility:parametric}).
However, we can consider for any $\Delta'<\KLradp{r}{\alpha}$ the event $\mb P_n\to \hat{\mb{P}}^\star$ for some $\hat{\mb{P}}^\star\in \mc P^\alpha_\Theta$ with $\Delta'\leq \radius(S_{r, \alpha}(\hat{\mb P}^\star))$ for which we get
\begin{align*}
  \radius(S^{\huber}_\Delta({\mb P}_n )) = \Delta >  \KLrad{r}{\alpha} & =\\
                                                    \KLradp{r}{\alpha}&\geq \radius(S_{r, \alpha}(\hat{\mb P}^\star))= \radius(\textstyle\lim_{\delta\downarrow 0} \lim_{n\to\infty}S^\delta_{r, \alpha}(\mb P^n))\geq \Delta'.
\end{align*}
Hence, as we may let $\Delta\downarrow \KLrad{r}{\alpha}$ and $\Delta'\uparrow\KLradp{r}{\alpha}=\KLrad{r}{\alpha}$, the Huber estimator is not only worst-case minimal but also almost sure minimal as an adversary may sample the data from the corrupted distribution $\mb P^c=\hat{\mb{P}}^\star\in \mc P^\alpha_\Theta$ in which case the event $\mb P_n\to \mb P^c=\hat{\mb{P}}^\star$ occurs almost surely \citep{varadarajan1958convergence}.

\begin{example}[Normal Family Continued]
  \label{ex:normal-family-3}
  Consider the standard normal location family $\mc P_\Theta = \set{N(\theta,1)}{\theta\in \Re}$ parametrized in the unknown mean $\theta\in\Re$ from Example \ref{ex:normal-family-2}. Let us consider the parameters $\alpha=0.1$ and $r=0.05$ indicated in Figure \ref{fig:diagram}. We numerically verify here that indeed $\KLradp{r}{\alpha}=\KLrad{r}{\alpha} = 0.66$ and depict the minimizers of  (\ref{eq:statistical-resolution-function-optimization-3}) in Figure \ref{fig:ex:eventual-distributions-i}. In Figure \ref{fig:ex:eventual-residual-i} we depict the confident set estimate $\SpKL{r}{\alpha}(\hat {\mb P}^\star)$ through its residual function as well and observe that in fact we have here the even stronger identity
  \[
    \SpKL{r}{\alpha}(\hat {\mb P}^\star) 
    =
    \set{\theta\in \Theta}{\abs{\theta-E^\star_\Delta(\hat {\mb P}^\star)}\leq \Delta}
  \]
  for $\Delta = \KLradp{r}{\alpha}=\KLrad{r}{\alpha}$.
\end{example}

\subsection{Regime $\KLrad{r}{\alpha}>\KLradp{r}{\alpha}$}

In general, however, we may have that $\KLrad{r}{\alpha}>\KLrad{r}{\alpha}'$ and we may write instead
\begin{align*}
  \radius(S^{\huber}_\Delta({\mb P}_n )) = \Delta >  \KLrad{r}{\alpha} & >\\
                                                    \KLradp{r}{\alpha}&\geq \radius(S_{r, \alpha}(\hat{\mb P}^\star))= \radius(\textstyle\lim_{\delta\downarrow 0} \lim_{n\to\infty}S^\delta_{r, \alpha}(\mb P^n))  > \Delta'.
\end{align*}
and hence in particular the Huber estimator is not almost sure minimal in this regime.
This means furthermore that although we may observe confidence set estimates up to radius $\KLrad{r}{\alpha}$, eventually in the large data limit, we fence in the unknown parameter within a confidence set of strictly smaller radius $\KLradp{r}{\alpha}$ using the minimal DRO confidence set estimator.

From \cref{thm:efficiency:location} we know that any fixed width confidence set based on a regular estimator must have a radius at least $\KLrad{r}{\alpha}$ to be feasible in (\ref{eq:feasibility:parametric}). As here $\KLradp{r}{\alpha}<\KLrad{r}{\alpha}$ this implies that no confidence set based on a regular estimator  can be minimal as pointed out in \cref{eq: KL radius prime} in terms of almost sure radius. In particular, while the confidence set estimator based on the Huber estimator is worst-case minimal it is not almost sure minimal in this regime.
This phenomena becomes particularly pronounced when we have $\KLradp{r}{\alpha}<\KLrad{r}{\alpha}=\infty$; see also Section \ref{sec:extreme-regime-case}. Here, the Huber confidence sets no matter their radius can not satisfy the imposed statistical guarantee (\ref{eq:feasibility:parametric}), whereas the minimal DRO confidence set estimator will almost surely fence in the unknown parameter eventually within a bounded set of radius at most $\KLradp{r}{\alpha}<\infty$.

{\color{black}
We have indicated that the radius worst-case $\KLrad{r}{\alpha}$ can be determined essentially analytically via \cref{eq:statistical-resolution-function-optimization} with the help of the least favorable distributions stated in \cref{lemma:huber-optimal-solution}. The situation is more complex for the almost sure radius $\KLradp{r}{\alpha}$ as we are not able to determine an analytical solution to problem (\ref{eq:statistical-resolution-function-optimization-3}). In the following examples in the context of a normal location family we determine the almost sure radius numerically instead by discretizing $\Xi$.
}

\begin{example}[Normal Family Continued]
  \label{ex:normal-family-4}
  Consider once more the standard normal location family $\mc P_\Theta = \set{N(\theta,1)}{\theta\in \Re}$ parametrized in the unknown mean $\theta\in\Re$ from Example \ref{ex:normal-family-2}.

  Let us now consider the parameters $\alpha=0.1$ and $r=0.4$ indicated in Figure \ref{fig:diagram}. We numerically verify here that indeed $1.48=\KLradp{r}{\alpha}<\KLrad{r}{\alpha} = 1.8$ and we depict the minimizers of (\ref{eq:statistical-resolution-function-optimization-3}) at $\Delta = \KLradp{r}{\alpha}$ in Figure \label{sec:regime-klradr}~\ref{fig:ex:eventual-distributions-ii}. Despite the fact that in this context while the Huber confidence estimator is minimal in terms of worst-case radius $\KLrad{r}{\alpha} = 1.8$ among any regular confidence set estimator, the DRO confidence estimator eventually fences in the unknown parameter within a set of strictly smaller radius $\KLradp{r}{\alpha}=1.48$ and should hence eventually be preferred; see also Figure \ref{fig:ex:eventual-residual-ii}.

  Let us finally consider $\alpha=0.1$ and $r=0.9$ indicated in Figure \ref{fig:diagram}. As this point is above the dotted feasibility line discussed in Section \ref{sec:extreme-regime-case} it follows $\KLrad{r}{\alpha} = \infty$. That is, no regular confidence set predictor can fence in the unknown location parameter within bounded set universally over all possible empirical distributions. However, we determine numerically that that $2.27=\KLradp{r}{\alpha}<\KLrad{r}{\alpha} = \infty$ and depict the minimizers of  (\ref{eq:statistical-resolution-function-optimization-3}) in Figure \ref{fig:ex:eventual-distributions-iii}. That is, even though not universally bounded, the DRO confidence set estimator will eventually bound the unknown location within a confidence set of radius $2.27=\KLradp{r}{\alpha}$; see also Figure \ref{fig:ex:eventual-residual-iii}.
\end{example}

\subsection{Regime $\KLradp{r}{\alpha}=\infty$}

We now point out that we may also have $\KLradp{r}{\alpha}=\infty$ in which case we can not even eventually fence in the unknown parameter within a bounded set.
In this regime the adversary the adversary can prevent any learning method from achieving the guarantee (\ref{eq:feasibility:parametric}).

To do so, for any $\Delta$, we will construct a feasible solution in Equation~(\ref{eq:statistical-resolution-function-optimization-3}) with a finite objective less than $\bar{r}'(\alpha)\defn \log\left(\tfrac{(1-\alpha)}{\alpha}\right) \left(1-\alpha\right) + \log\left(\tfrac{\alpha}{(1-\alpha)}\right)\alpha$. This implies $\rp^\alpha(\Delta) < \bar{r}(\alpha)$ for any $\Delta$, and therefore $\KLradp{r}{\alpha} = \infty$ for any $r \geq \bar{r}'(\alpha)$ using \cref{theorem:statistical-resolution-function-3}.
Let us introduce indeed the distributions
\begin{align*}
  \mb Q^-_{\Delta} = & (1-\alpha) \mb P_{-\Delta} + \alpha \mb P_{\Delta},\\
  \mb Q^+_{\Delta} = \hat{\mb P} = & (1-\alpha) \mb P_{\Delta} + \alpha \mb P_{-\Delta}.
\end{align*}
It can easily be verified that indeed $\TV(\mb P_{-\Delta}, \mb Q^-_{\Delta} ) = \TV(\mb P_{-\Delta}, (1-\alpha)\mb P_{-\Delta}+\alpha \mb P_{\Delta}) \leq (1-\alpha)\TV(\mb P_{-\Delta}, \mb P_{-\Delta})  +\alpha \TV(\mb P_{-\Delta}, \mb P_{\Delta}) \leq \alpha$ and symmetrically $\TV(\mb P_{\Delta}, \mb Q^+_{\Delta} )\leq \alpha$. Observe that
\begin{align*}
  \lim_{\Delta\to\infty} \KL((1-\alpha) \mb P_{\Delta} + \alpha \mb P_{-\Delta}, \mb Q^-_{\Delta}) = & \lim_{\Delta\to\infty} \KL((1-\alpha) \mb P_{\Delta} + \alpha \mb P_{-\Delta}, (1-\alpha) \mb P_{-\Delta} + \alpha \mb P_{\Delta})\\
  = & \log\left(\frac{1-\alpha}{\alpha}\right) \left(1-\alpha\right) + \log\left(\frac{\alpha}{1-\alpha}\right)\alpha = \bar r'(\alpha).
\end{align*}

Hence, \cref{theorem:statistical-resolution-function-3} implies the following result.

\begin{proposition}
    We have $\KLradp{r}{\alpha} = \infty$ for any $r\geq \bar r'(\alpha) = \log\left(\frac{1-\alpha}{\alpha}\right) \left(1-\alpha\right) + \log\left(\frac{\alpha}{1-\alpha}\right)\alpha$.
\end{proposition}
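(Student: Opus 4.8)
The plan is to reduce the claim, via \cref{theorem:statistical-resolution-function-3}, to showing that the sublevel set $\set{\abs{\Delta}}{\rp^{\alpha}(\Delta)\leq r}$ is unbounded whenever $r\geq \bar r'(\alpha)$. Since $\rp^{\alpha}(\Delta)$ is itself a minimization, it suffices to exhibit, for \emph{every} $\Delta\geq 0$, a single feasible point of problem \eqref{eq:statistical-resolution-function-optimization-3} whose objective does not exceed $r$. I would take the explicit mixtures $\mb Q^-_{\Delta} = (1-\alpha)\mb P_{-\Delta} + \alpha\mb P_{\Delta}$ and $\hat{\mb P} = \mb Q^+_{\Delta} = (1-\alpha)\mb P_{\Delta} + \alpha\mb P_{-\Delta}$, together with the location $\theta = \Delta \in [-\Delta,\Delta]$. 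The goal is to bound the induced objective by $\bar r'(\alpha)\leq r$ uniformly in $\Delta$, so that the sublevel set contains all of $[0,\infty)$ and $\KLradp{r}{\alpha}=\infty$ follows at once.

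The feasibility checks split into the total variation constraints and the two Kullback--Leibler constraints. For the former, convexity of the total variation distance together with $\TV(\mb P_{-\Delta},\mb P_{-\Delta})=0$ gives $\TV(\mb Q^-_{\Delta},\mb P_{-\Delta}) \leq \alpha\,\TV(\mb P_{\Delta},\mb P_{-\Delta}) \leq \alpha$, and symmetrically $\TV(\mb Q^+_{\Delta},\mb P_{\Delta})\leq\alpha$; the remaining constraint $\TV(\hat{\mb P},\mb P_{\theta})\leq\alpha$ at $\theta=\Delta$ is exactly this last inequality since $\hat{\mb P}=\mb Q^+_{\Delta}$. For the KL constraints, the choice $\hat{\mb P}=\mb Q^+_{\Delta}$ forces $\KL(\hat{\mb P},\mb Q^+_{\Delta})=0$ automatically, so the entire burden rests on bounding $\KL(\hat{\mb P},\mb Q^-_{\Delta})$, the divergence between two mixtures of the same pair of components $\mb P_{\pm\Delta}$ with interchanged weights $(1-\alpha,\alpha)$ and $(\alpha,1-\alpha)$.

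The cleanest way I would control this last divergence is to lift both mixtures to joint laws over a latent component label $Y\in\{1,2\}$ and the sample $\xi$, under which the conditional sample laws coincide. The chain rule then equates the divergence of the joints with the divergence of the label marginals, and marginalizing out the label can only decrease the divergence, so that $\KL(\hat{\mb P},\mb Q^-_{\Delta}) \leq (1-\alpha)\log\tfrac{1-\alpha}{\alpha} + \alpha\log\tfrac{\alpha}{1-\alpha} = \bar r'(\alpha)$ for every $\Delta$. This already closes the argument, giving $\rp^{\alpha}(\Delta)\leq\bar r'(\alpha)\leq r$ for all $\Delta$. Moreover the bound is sharp: as the two component locations separate ($\Delta\to\infty$) the overlap of $\mb P_{\Delta}$ and $\mb P_{-\Delta}$ vanishes and a direct computation shows $\KL(\hat{\mb P},\mb Q^-_{\Delta})\to\bar r'(\alpha)$, which is what pins the critical resolution at exactly $\bar r'(\alpha)$.

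I expect the divergence bound of the third paragraph to be the only real obstacle. The explicit integral $\int \hat p\log(\hat p/q^-_\Delta)\,\d\xi$ does not simplify because, for finite $\Delta$, the densities $g(\cdot-\Delta)$ and $g(\cdot+\Delta)$ have fully overlapping support (as $g>0$ on all of $\Re$), so the two ``bumps'' cannot be separated cleanly. The latent-variable lift sidesteps this computation entirely and yields a uniform bound; the only care needed is to verify the lift and that the marginalization step is genuinely the monotonicity of $\KL$. The accompanying limit is then a routine dominated-convergence calculation, used only to certify that $\bar r'(\alpha)$ cannot be lowered by this construction rather than to establish the proposition itself.
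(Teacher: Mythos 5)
Your proposal is correct, and it uses exactly the paper's construction: the same swapped mixtures $\mb Q^-_{\Delta} = (1-\alpha)\mb P_{-\Delta} + \alpha\mb P_{\Delta}$ and $\hat{\mb P} = \mb Q^+_{\Delta} = (1-\alpha)\mb P_{\Delta} + \alpha\mb P_{-\Delta}$, the same convexity argument for the total variation constraints, and the same reduction through \cref{theorem:statistical-resolution-function-3}. The one place you genuinely diverge is in how the binding constraint $\KL(\hat{\mb P},\mb Q^-_{\Delta})\leq \bar r'(\alpha)$ is certified. The paper only displays the limiting computation $\lim_{\Delta\to\infty}\KL\bigl((1-\alpha)\mb P_{\Delta}+\alpha\mb P_{-\Delta},\,(1-\alpha)\mb P_{-\Delta}+\alpha\mb P_{\Delta}\bigr)=\bar r'(\alpha)$ and asserts (without displayed justification) that the objective is below $\bar r'(\alpha)$ for every finite $\Delta$; filling that in requires precisely the kind of monotonicity or data-processing step you make explicit. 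Your latent-label lift---equal conditional laws given the label, chain rule reducing the joint divergence to $\KL\bigl(\mathrm{Bern}(1-\alpha)\,\|\,\mathrm{Bern}(\alpha)\bigr)=\bar r'(\alpha)$, then marginalization only decreasing $\KL$---yields the uniform bound $\rp^{\alpha}(\Delta)\leq\bar r'(\alpha)$ for all $\Delta$ at once, which is exactly what the sublevel-set characterization in \cref{theorem:statistical-resolution-function-3} needs (the set $\set{\abs{\Delta}}{\rp^{\alpha}(\Delta)\leq r}$ is defined with a non-strict inequality, so your non-strict bound suffices even at $r=\bar r'(\alpha)$). What each route buys: your version is self-contained and arguably tighter than the paper's write-up, since it avoids any appeal to a limit; the paper's limit computation, which you relegate to a sharpness remark, additionally certifies that $\bar r'(\alpha)$ is the exact threshold attainable by this construction, but on its own it does not prove the proposition for finite $\Delta$. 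One small point to state explicitly when writing this up: the choice $\theta=\Delta$ (which you do make) is what satisfies the third constraint $\TV(\hat{\mb P},\mb P_\theta)\leq\alpha$ in \eqref{eq:statistical-resolution-function-optimization-3}, a constraint the paper's proof handles only implicitly via $\hat{\mb P}=\mb Q^+_{\Delta}$.
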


\bibliography{references}

\end{document}